\newenvironment{customthm}[1]
  {\innercustomthm}
  {\endinnercustomthm}
\DeclareMathAlphabet{\mathpzc}{OT1}{pzc}{m}{it}
\newtheorem{theorem}{Theorem}[subsection]
\newtheorem{corollary}[theorem]{Corollary}
\newtheorem{lemma}[theorem]{Lemma}
\newtheorem{proposition}[theorem]{Proposition}
\newtheorem{quasi-theorem}[theorem]{Quasi-Theorem}
\theoremstyle{definition}
\newtheorem{definition}[theorem]{Definition}
\newtheorem{algorithm}[theorem]{Algorithm}
\newtheorem{warning}[theorem]{Warning}
\newtheorem{assumption}{Assumption}[subsection]
\newtheorem{example}[theorem]{Example}
\newtheorem{blank remark}[theorem]{}
\theoremstyle{remark}
\newtheorem{rem1}[theorem]{Remark}
\newenvironment{remark}{\begin{rem1}\em}{\end{rem1}}
\newtheorem{not1}[theorem]{Notation}
\def\@tocline#1#2#3#4#5#6#7{\relax
  \ifnum #1>\c@tocdepth 
  \else
    \par \addpenalty\@secpenalty\addvspace{#2}%
    \begingroup \hyphenpenalty\@M
    \@ifempty{#4}{%
      \@tempdima\csname r@tocindent\number#1\endcsname\relax
    }{%
      \@tempdima#4\relax
    }%
    \parindent\z@ \leftskip#3\relax \advance\leftskip\@tempdima\relax
    \rightskip\@pnumwidth plus4em \parfillskip-\@pnumwidth
    #5\leavevmode\hskip-\@tempdima
      \ifcase #1
       \or\or \hskip 1em \or \hskip 2em \else \hskip 3em \fi%
      #6\nobreak\relax
    \dotfill\hbox to\@pnumwidth{\@tocpagenum{#7}}\par
    \nobreak
    \endgroup
  \fi}
\newcommand{\A}{{\mathbb{A}}}           
\newcommand{\CC} {{\mathbb C}}          
\newcommand{\NN} {{\mathbb N}}		
\newcommand{\PP}{\mathbb{P}}         
\newcommand{\RR} {{\mathbb R}}		
\newcommand{\Rb} {{\mathbf R}}		
\def\setminus{\smallsetminus}
\newcommand{\Hom}{\operatorname{Hom}}
\DeclareMathOperator{\spec}{Spec}
\def\fM{\mathfrak{M}}
\newcommand{\plC}{\scalebox{0.8}[1.3]{$\sqsubset$}}
\newcommand{\plD}{\scalebox{0.85}[1.33]{$\rhd$}}
\newcommand{\bplC}{\scalebox{0.8}[1.3]{$\bm{\sqsubset}$}}
\newcommand{\Kbar}{\mathsf{K}}
\def\trop{\mathrm{trop}}
\newcommand{\Spec}{\operatorname{Spec}}
\def\blfootnote{\xdef\@thefnmark{}\@footnotetext}
\title[]{Logarithmic Gromov--Witten theory with expansions}
\date{}
\author{Dhruv Ranganathan}
\address{Dhruv Ranganathan \newline Department of Pure Mathematics and Mathematical Statistics\newline Wilberforce Road, University of Cambridge, \newline Cambridge, CB3 0WA UK}
\email{\href{mailto:dr508@cam.ac.uk}{dr508@cam.ac.uk}}
\begin{document}

\begin{abstract}
We construct relative Gromov--Witten theory with expanded degenerations in the normal crossings setting and establish a degeneration formula for the resulting invariants. Given a simple normal crossings pair $(X,D)$, we show that there exist proper moduli spaces of curves in $X$ with prescribed boundary conditions along $D$, equipped with virtual classes. Each point in such a moduli space parameterizes a map from a nodal curve to an expanded degeneration of $X$ that is dimensionally transverse to the strata. In the context of maps to a simple normal crossings degeneration, the virtual fundamental class is known to decompose as a sum over tropical maps. We use the expanded formalism to prove the degeneration formula -- we reconstruct the virtual class attached to a tropical map in terms of spaces of maps to expansions attached to the vertices. 
\end{abstract}

\blfootnote{MSC 2020 Classification: 14N35, 14A21, 14T90}
\blfootnote{Keywords: Logarithmic Gromov--Witten theory, degeneration formula, expanded degenerations}

\maketitle

\vspace{-0.2in}

\setcounter{tocdepth}{1}
\tableofcontents

\noindent
\newpage
\section*{Introduction} 


\subsection{The problem} Let $X$ be a smooth projective algebraic variety and let $D\subset X$ be a simple normal crossings divisor with components $D_1,\ldots,D_k$. Relative Gromov--Witten theory concerns maps of pairs
\[
(C,p_1,\ldots, p_n)\to (X,D),
\]
where $C$ is a smooth genus $g$ curve in a homology class $\beta$, meeting the component $D_i$ at $p_j$ with contact order $c_{ij}\in \mathbb N$. For each $j$, the contact order $c_{ij}$ is nonzero for at most one index $i$. These numerical data are packaged by the symbol $\Gamma$ and the divisor $D$ is implicit in the notation $X$. There is a non-proper Deligne--Mumford stack $\mathsf{K}_\Gamma^\circ(X)$ parameterizing such maps. 

The problem considered in this paper is to build a proper moduli space $\mathsf{K}_\Gamma(X)$ equipped with universal marked curve and target families
\[
\begin{tikzcd}
\mathcal C \arrow{rr} \arrow{dr} & & \mathcal X\arrow{dl} \arrow{r} & X\\
& \mathsf{K}_\Gamma(X), & &
\end{tikzcd}
\]
and sections $s_i:\mathsf{K}_\Gamma(X)\to \mathcal C$. Two requirements are sought.
\begin{enumerate}[(A)]
\item {\bf Virtual toroidality.} The moduli space $\mathsf{K}_\Gamma(X)$ is virtually smooth over an equidimensional logarithmically smooth stack. It is therefore equipped with a virtual fundamental class giving rise to a system of Gromov--Witten invariants of $X$ \textbf{relative} to $D$.
\item {\bf Transversality.} Fix a point $p$ of $\Kbar_\Gamma(X)$ and denote by $(\mathcal C_p,s_i)\to \mathcal X_p$ the map at this point. Then $\mathcal C_p$ meets $\mathcal X_p$ at $s_i$ along a divisor $\mathcal D_{j,p}$ with contact order $c_{ij}$. The divisor $\mathcal D_{j,p}$ maps to $D_{j}$ on projecting to $X$. 
\end{enumerate}

Canonicity of the spaces is not demanded, and we permit discrete choices. However, the resulting Gromov--Witten theory must be independent of choices. The problem is solved in the first part of the paper. The solution leads to a gluing formula for the Gromov--Witten theory of simple normal crossings degenerations, which occupies the second part of the paper. 

\subsection{A brief history} In algebraic geometry this problem was first considered by Li in the case where $D$ is smooth, following work of Gathmann and Vakil~\cite{Gat02,Vak00}. In order to obtain a compactification, he studies maps to \textbf{expansions} of $X$ along $D$, called ``accordions''. These are obtained by iteratively degenerating $X$ to the normal cone of $D$ in $X$, and form the fibers of $\mathcal X$ above. After replacing the target with an accordion, transverse stable limits of families of relative maps exist and are unique. The resulting moduli space $\mathsf{Li}_\Gamma(X)$ is a proper Deligne--Mumford stack with a virtual fundamental class~\cite{Li01}. Li's theory is central to enumerative geometry. The original construction of the virtual class is technical, but the advent of new methods -- orbifolds and logarithmic structures -- offered simplifications~\cite{AF11,Kim08}. Important conceptual advances were made in~\cite{AMW12}.

There have been several attempts to generalize Li's approach to the simple normal crossings setup, but it appears that the idea was abandoned in favour of the logarithmic approach. Remarkably, the transversality condition demanded above can be avoided entirely for the purpose of \textit{defining} relative invariants. In 2001, Siebert suggested that one could study stable maps within the category of logarithmic schemes. This leads to a concise solution to the problem. In the logarithmic category, a map $C\to X$ can have components that are scheme theoretically contained in $D$ and nevertheless carry a well-defined contact order at marked points. Abramovich--Chen and Gross--Siebert constructed  moduli spaces $\mathsf{ACGS}_\Gamma(X)$ of logarithmic maps to a simple normal crossings pair $(X,D)$, equipped with a virtual class~\cite{AC11,Che10,GS13}, building on striking work of Nishinou and Siebert~\cite{NS06}. Crucially, the work of Gross, Nishinou, and Siebert  inserted tropical geometry into the theory of stable maps, and this is fundamental to the direction proposed here.

The logarithmic theory of maps is powerful, but the transversality condition is still desirable. A basic reason is to complete the conceptual parallel with Li's theory, but there are more compelling ones. Many applications of relative Gromov--Witten theory come via the degeneration formula~\cite{AF11,CheDegForm,Li02}. The formula relates spaces of maps with matching boundary conditions with the space of maps to a general fiber. In the logarithmic theory, a parallel degeneration formula has not materialized, though there is encouraging recent work in this direction~\cite{ACGS15,ACGS17,KLR}. The degeneration formula is the second contribution of our paper. In the logarithmic context, a basic difficulty arises when gluing logarithmic structures in families of stable maps. Gluing the curves compatibly with the underlying schematic map to $X$ is necessary but rarely sufficient. The remaining ``purely logarithmic'' gluing presents several challenges. In our framework, the logarithmic data is recorded in a schematic fashion by a factorization of a map to $X$ through an expansion. The role of the logarithmic structures is diminished, and traditional schematic methods are implemented. 


\subsection{Main results} Let $(X,D)$ be a toroidal pair without self-intersections.

\begin{customthm}{A}[{\it Stable maps to expansions}]\label{thm: transverse-maps}
There exists a proper and virtually toroidal moduli space $\mathsf{K}_\Gamma(X)$ equipped with flat source and target families
\[
\begin{tikzcd}
\mathcal C \arrow{rr} \arrow[swap]{dr}{\pi_s} & & \mathcal X\arrow{dl}{\pi_t} \arrow{r} & X\\
& \mathsf{K}_\Gamma(X), & &
\end{tikzcd}
\]
such that the universal curve is transverse to the universal target at every point on the base. The space is equipped with a virtual fundamental class and with (i) a forgetful morphism to the stack of curves $\mathfrak M_{g,n}$ and (ii) evaluation morphisms to the strata of $X$. Gromov--Witten invariants may be defined by pulling back Chow cohomology classes along these morphisms and integrating against the virtual fundamental class. 
\end{customthm}

The reader may consult Example~\ref{ex: line-degeneration}, which exhibits a stable limit of a family in the space of expanded maps, and compares it to the limit as a logarithmic map. At present, we note the following features of the construction.
\begin{enumerate}[(A)]
\item {\bf Target expansions.} The fibers of $\pi_t$ are toroidal degenerations of $X$. Each component of such a degeneration is birational to an equivariant partial compactification of a torus bundle over a stratum of $(X,D)$.
\item {\bf Smooth intersections} Given a fiber $\mathcal X_p$ of $\pi_t$, the intersection of any two irreducible components of $\mathcal X_p$ is a smooth, possibly non-proper divisor in each. 
\item {\bf Comparison morphism.} There is a canonical morphism
\[
\mathsf{K}_\Gamma(X)\to \mathsf{ACGS}_\Gamma(X)
\]
which is a logarithmic modification compatible with virtual structures, and in particular identifies virtual classes, see Section~\ref{sec: vir-birationality}. 
\end{enumerate}

In order to prove this result, we start with the space of logarithmic maps and then use toroidal modifications of the curve and target family to obtain the requisite transversality. We perform semistable reduction for the family by toroidal modifications of the base. These constructions are first made at the combinatorial level, then for maps to the Artin fan, and finally for curves in $X$. 

The construction produces an infinite family of moduli spaces of transverse maps controlled by tropical moduli data. In the main text, these spaces are denoted $\Kbar^\lambda_\Gamma(\cdot)$, with $\lambda$ denoting an auxiliary polyhedral choice. The choices of $\lambda$ form an inverse system, analogous to the inverse system of refinements of a given fan. The virtual classes form a compatible system in Chow homology, see Proposition~\ref{prop: vir-birationality}. The properties in Theorem~\ref{thm: transverse-maps} are true \textit{level-by-level} on a cofinal subsystem, and not only on the inverse limit. 

The transversality condition is sufficiently robust to accommodate a cycle theoretic gluing formula for the Gromov--Witten invariants of degenerations.

\begin{customthm}{B}[{\it Degeneration, decomposition, and gluing}]\label{thm: deg-form}
Let $\mathscr Y\to \A^1$ be a toroidal degeneration without self intersections, general fiber $Y_\eta$, and special fiber $Y_0$. There exist moduli spaces $\Kbar_\Gamma(\cdot)$ of maps to expansions with the following properties. 
\begin{enumerate}[(A)]
\item {\bf Virtual deformation invariance.} There is an equality of virtual classes
\[
[\Kbar_\Gamma(Y_0)] = [\Kbar_\Gamma(Y_\eta)]
\]
in the Chow group of the space of maps $\Kbar_\Gamma(\mathscr Y)$.
\item {\bf Decomposition.} The virtual class of maps to $Y_0$ decomposes as a sum over combinatorial splittings of the discrete data (i.e. tropical stable maps)
\[
[\Kbar_\Gamma(Y_0)] = \sum_\rho m_\rho [\Kbar_\rho(Y_0)]
\]
where $m_\rho\in \mathbb Q$ are explicit combinatorial multiplicities depending on the splitting $\rho$. Each space $\Kbar_\rho(Y_0)$ is a space of maps to expansions, marked by splitting type.
\item {\bf Gluing.} For each splitting $\rho$ with graph type $G$, there are moduli spaces $\Kbar_\rho(X_v)$ of maps to expansions of components $X_v$ of $Y_0$. There is a virtual birational model of their product
\[
{\bigtimes_v} \Kbar_{\rho}(X_v) \to \prod_v \Kbar_{\rho}(X_v),
\]
and an explicit formula relating the virtual class $[\Kbar_\rho(Y_0)]$ with the virtual class $[{\bigtimes_v} \Kbar_{\rho}(X_v)]$ and the class of the relative diagonal of the universal divisor expansion.
\end{enumerate}
\end{customthm}

We refer the reader to the main text for the precise birational modification above. Its genesis is simple. The product of the moduli spaces of maps to $X_v$ encodes disconnected maps to expansions of components of the normalization. If we are to glue two expansions, a bubbling of the target on one side may force a bubbling of the target on the other side\footnote{\textit{The fates of the two sides are tied}, says Pandharipande.} Since the targets on the different factors of the product must interact, a ``criss-cross'' blowup is necessary. 

\subsection{Using the formula} The formula is more complicated than the corresponding double point formula, and it can be nontrivial to extract numerical consequences. However the complexity can be understood concretely and worked with. It arises from the fact that the gluing condition is not imposed by a diagonal condition coming from the strata of $(X,D)$ but rather the strict transform of this class along a birational modification, as described above. The modification is obtained by subdividing the product of tropical moduli spaces. The modification is completely described by piecewise linear geometry on the associated family of tropical curves. Describing the blowup is a finite problem that can be coded. Intersection theory computations with these modifications can be carried out by using the Fulton--Macpherson blowup formula for comparing strict and total transforms~\cite[Section~6.7]{Ful13}. The blowups are logarithmic, so the blowup formula is captured by Chow operators on the Artin fans of these moduli spaces of maps. The rational Chow cohomology of Artin fans is the ring of piecewise polynomials~\cite{MPS20,MR21,Pay06,R20}. We expect these to become a natural calculus for statements in logarithmic intersection theory. 


Even without a systematic calculus, we note that these formulae can often be ``black boxed'' to derive structural results. A prototypical example is provided by work of Pandharipande and Pixton~\cite[Section~1.2 {\it \&} 5]{PP17}. They compare the bi-relative Gromov--Witten and stable pairs theory by working with an instance of the formula above and deduce the Gromov--Witten/pairs correspondence for a large class of varieties. 

There are cases in the existing literature where the formula collapses and these are recorded in the final section of the paper. We also present a simplified formula for genus $0$ invariants in the case of a triple point degeneration, see Section~\ref{sec: implementations}.


\subsection{Further discussion} Our results complete the simple normal crossings generalization of Li's theory. The virtual class of Li's theory also satisfies a remarkable torus localization formula~\cite{GV05,MR19}. The parallel formula in the expanded theory is the natural next step.  If proved, these will yield reconstruction theorems for Gromov--Witten invariants via semistable reduction. A basic test for these developments would be to show that the logarithmic theory of $(X,D)$ is determined by the absolute theory of the strata, generalizing the calculation schemes of Gathmann and Maulik--Pandharipande~\cite{Ga03,MP06}. See~\cite[Corollary~Y]{BNR22} for progress in genus $0$.

A cousin of the theory in the present paper is \textit{punctured} logarithmic Gromov--Witten theory, recently developed by Abramovich--Chen--Gross--Siebert~\cite{ACGS17}. Superficially, maps to expansions are different beasts than punctured maps, but we expect that the gluing formulas will be similar. A version of the expanded theory for non-rigid targets can be constructed, which form the analogue of punctured maps, see~\cite{CN21}. 


The principle underlying our approach is that the category of all logarithmic stable maps, rather than the minimal or basic maps considered in the literature, can be worked with in its own right. The minimality condition amounts to the tropical moduli of a logarithmic family being as large as possible. We vary this condition by allowing the tropical moduli to be supported on faces of a refinement of the tropical moduli space. This leads to subcategories consisting of non-minimal maps that are represented over schemes by logarithmic modifications. The same ideas arise in earlier work of Santos-Parker, Wise, and the author, which is a source of inspiration~\cite{RSW17A,RSW17B}. Although non-archimedean geometry makes no overt appearance here, the ideas are parallel to Raynaud's approach to analytic geometry via admissible formal schemes.

\subsection{Inverse limits for Chow groups and moduli spaces} Our methods produce an infinite family of virtually birational moduli spaces for which a gluing formula holds. The results could be reformulated using an elegant idea of Aluffi~\cite{Alu05}. Given a logarithmic scheme or stack $\mathsf X$, consider the category $\mathcal C_{\mathsf X}$ whose objects are logarithmic modifications $\mathsf{X}'\to \mathsf X$, see~\cite{AW,Kato94}, and whose morphisms are the commuting triangles. Systems $\mathcal C_{\mathsf X}$ and $\mathcal C_{\mathsf Y}$ are equivalent if they contain objects with isomorphic source. The category $\mathcal C_{\mathsf X}$ is equipped with a Chow group defined as the inverse limit of Chow groups of modifications, and an operational Chow ring obtained from the direct limit.  Virtual fundamental classes of the spaces constructed here give a class on the inverse limit, and Gromov--Witten invariants are integrals of operational Chow classes against the virtual class. The gluing formula is then an expression for the virtual class in the Chow groups of these categories. Recent work on logarithmic intersection theory by Barrott realizes these ideas~\cite{Bar18}.

One could try to pass to the inverse limit of the spaces themselves. A natural category in which to do this is that of locally topologically ringed spaces. The limit resembles an object in Foster and Payne's theory of adic tropicalizations~\cite{FP15}. Indeed, this approach was suggested in~\cite[arXiv v1, Remark 4.3.2]{R15b}. One could also sheafify the category of logarithmic schemes in the topology of logarithmically \'etale modifications and formulate the gluing formula on the \textbf{valuativization}~\cite{KatoLDDT}.  The approaches would provide an elegant repackaging of our framework, but we prefer more widely known techniques.



\subsection{Symplectic geometry and exploded manifolds} There is a rich parallel story in the symplectic category due to Li--Ruan, Ionel--Parker, Ionel, Farajzadeh Tehrani--McLean--Zinger, and Farajzadeh Tehrani developing Gromov--Witten theories in relative and logarithmic geometries~\cite{I15,IP03, IP04,LR01,Teh17,TMZ}. The symplectic side was motivated by considerations in Donaldson--Floer theory and predates the algebro-geometric versions. 

In a series of papers, Parker has developed an approach to Gromov--Witten theory relative to normal crossings divisors, using exploded manifolds~\cite{Par11,Par12a}. He proves a gluing formula for Gromov--Witten invariants similar to the one we establish here~\cite{Par17a}. It appears that one can compare Parker's category to logarithmic schemes by base changing to $\spec(\RR_{\geq 0}\to \CC)$, and then passing to the inverse limit as above. We work with the inverse system here, rather than the inverse limit. Our theory appears is formally consistent with exploded Gromov--Witten theory, with several parallels. The approach here is analogous to Parker's \textit{tropical completions}, while punctured Gromov--Witten theory is analogous to \textit{cut curves}, see~\cite[Section~13]{Par12b}. We hope that this paper will lead to a wider understanding of exploded manifolds.

\subsection{User's guide} In the first part of the paper, we study logarithmic maps to expansions. The types of expansions we consider are obtained by polyhedral subdivisions of tropical targets, which are defined at the beginning of Section~\ref{sec: comb-ss-reduction}. The spaces of tropical maps to these expansions with the requisite transversality properties are constructed in Section~\ref{sec: transverse-maps} using toroidal weak semistable reduction~\cite{AK00}. These combinatorial constructions are lifted to statements about logarithmic stable maps using Artin fan techniques in Section~\ref{sec: maps-from-subs}, leading to Theorem~\ref{thm: transverse-maps}. We begin the second part of the paper by studying the degeneration formula in a purely combinatorial setting using extended tropicalizations. The appropriate moduli spaces of transverse maps are constructed in Section~\ref{sec: tropical-gluing}. Special attention is paid to the geometry of the product in Section~\ref{sec: modify-product}. Finally, the combinatorial constructions are used to prove the virtual gluing formula in Section~\ref{sec: general-gluing}, leading to Theorem~\ref{thm: deg-form}.

Our results are stated for toroidal embeddings without self-intersection, rather than all logarithmically smooth targets. In light of advances in logarithmically \'etale descent, this is largely an expositional choice~\cite{ACMW,AW}.  Logarithmic Gromov--Witten theory is insensitive to logarithmic modifications, so this presents no restrictions in applications. 

\subsection{Recent progress} The present paper was first circulated in 2018, and there has been significnant progress in the intervening years. A parallel logarithmic theory of Donaldson--Thomas invariants has been developed, via the construction of a moduli space of higher rank expansions~\cite{MR20}. The methods in loc. cit. can be applied to construct the spaces in Theorem~\ref{thm: transverse-maps} in a manner that is logically independent from the foundational papers in logarithmic Gromov--Witten theory~\cite{AC11,Che10,GS13}, and this is explained by Carocci--Nabijou~\cite{CN21}. On the other hand, punctured Gromov--Witten theory gives rise to a different solution to gluing problems for logarithmic maps~\cite{ACGS17}. The approaches appear to have complementary strengths. Additional progress has been made in logarithmic intersection theory, see~\cite{Bar18,Herr19,MPS20,MR21}. 

\subsection*{Acknowledgements} The strategy in this paper arose from attempting to justify a false claim I made to D. Maulik and I am grateful to him for countless discussions. I thank D. Abramovich and M. Gross for encouragement and interesting discussions along the way. Related ideas were developed in work with J. Wise and K. Santos-Parker~\cite{RSW17A,RSW17B} and I learned a great deal from them. I have benefited from conversations with L. Battistella, D. Bejleri, Q. Chen, R. Cavalieri, M. van Garrel, N. Nabijou, S. Marcus, H. Markwig, S. Molcho, R. Pandharipande, H. Ruddat, and M. Talpo. Finally, I would like to acknowledge the influence of the wonderful paper of Abramovich--Karu~\cite{AK00} on the text. The paper was improved by the comments of an anonymous referee.

\noindent
The author is supported by EPSRC New Investigator Grant EP/V051830/1.

\subsection*{Conventions} We work exclusively with fine and saturated logarithmic schemes that are locally of finite type over $\CC$ that are locally of finite type. All cones will be rational and polyhedral, equipped with a toric monoid of positive linear functions. Given a moduli space $\mathsf{K}$ equipped with a virtual fundamental class, we denote this class by $[\mathsf 
K]$. The logarithmic structure sheaf will be banished from the notation, and the word ``underlying'' will be used when ignoring the logarithmic structure. The symbols $\plC$, $\plD$ and variants, being piecewise linear, will be used to denote tropical curves, in a continuing effort to popularize a convention of Abramovich.

{\Large \part{The expanded theory}}

\vspace{0.2in}

\section{Curves and automorphisms}

\subsection{Logarithmic curves and tropicalizations} We begin by recalling the definition of a tropical curve. 

\begin{definition}\label{def: trop-curve}
An \textbf{$n$-marked tropical curve} $\plC$ or simply a \textbf{tropical curve} is a finite graph $G$ with vertex and edge sets $V$ and $E$, equipped with
\begin{enumerate}
\item a \textbf{marking function} $m: \{1,\ldots,n\}\to V$,
\item a \textbf{genus function} $g:V\to \NN$,
\item a \textbf{length function} $\ell: E\to \RR_{+}$.
\end{enumerate}
The \textbf{genus} of a tropical curve $\plC$ is defined to be 
\[
g(\plC) = h_1(G)+\sum_{v\in V} g(v)
\]
where $h_1(G)$ is the first Betti number of the graph $G$.
\end{definition}

The tropical curve $\plC$ has a \textbf{metric realization}. First endow the graph $G$ with its evident metric from the edge length function.  Then, for each marking $i$ with $m(i) = v$, attach a copy of $\RR_{\geq 0}$ to $v$ at the point $0$.  The point at infinity of this unbounded edge will be understood as ``the'' marked point $p_i$. 

If $\ell$ is allowed to take values in an arbitrary monoid, we obtain families of tropical curves.

\begin{definition}
Let $\sigma$ be a cone with dual cone $S_\sigma$. A \textbf{family of $n$-marked tropical curves over $\sigma$} is a graph $G$ with marking and genus function as in Definition~\ref{def: trop-curve}, and whose length function takes values in $S_\sigma$.
\end{definition}

A point of $\sigma$ is a monoid homomorphism $\varphi: S_\sigma \to \RR_{\geq 0}$. If this homomorphism is applied to the edge length $\ell(e)\in S_\sigma$, we obtain a positive real length for each edge and thus a tropical curve.

\subsection{Logarithmic curves}\label{sec: log-trop} Let $(S,M_S)$ be a logarithmic scheme. A \textbf{family of logarithmically smooth curves over $S$} is a logarithmically smooth, flat, and proper morphism
\[
\pi: (C,M_C) \to (S,M_S),
\]
with connected and reduced fibers of dimension $1$, see~\cite{Kat00}. 

%
%

Associated to a logarithmic curve $C\to S$ is a family of tropical curves, called its \textbf{tropicalization}. Given a node of $C$, its deformation parameter is an element of the characteristic monoid of $S$. 

\begin{definition}[The tropicalization of a log smooth curve]
Let $C\to S$ be a family of logarithmically smooth curves and assume that the underlying scheme of $S$ is a geometric point. The \textbf{tropicalization $C$} denoted $\plC$, is obtained as as follows: 

\begin{enumerate}[(A)]
\item the underlying graph is the marked dual graph of $C$ equipped with the standard genus and marking functions, 
\item given an edge $e$, the generalized length $\ell(e) = \delta_e\in \overline M_S$ is the deformation parameter of the corresponding node of $C$.
\end{enumerate}
\end{definition}

\subsection{Automorphisms} We use the following adjectives for curves \textit{without a chosen logarithmic structure}. A \textbf{prestable} curve is a possibly marked, nodal curve. A prestable curve is \textbf{semistable} if each rational component of the normalization contains at least $2$ distinguished points. A \textbf{stable curve} is one where each rational component of the normalization hosts at least $3$ distinguished points. 

We examine the infinitesimal automorphisms of logarithmically smooth curves. This is not original but only records the analysis of infinitesimal automorphisms of logarithmically smooth curves due to Santos-Parker in his thesis, advised by Wise~\cite[Section 3]{KSP-thesis}. The facts are implicit in our paper~\cite{RSW17A}. 

The subject of automorphisms of non-minimal logarithmic curves has not been expressly considered in the literature, and is more subtle than one would initially expect. We begin with a definition~\cite[Definition~B.3.1]{Che10}.

\begin{definition}
An \textbf{automorphism of a logarithmically smooth curve} $(C,M_C)\to (S,M_S)$ is a commutative diagram
\[
\begin{tikzcd}
(C,M_C)\arrow{d}\arrow{r} &(C,M_C) \arrow{d} \\
(S,M_S) \arrow{r} & (S,M_S),
\end{tikzcd}
\]
such that
\begin{enumerate}[(1)]
\item the horizontal arrows are isomorphisms of logarithmic schemes,
\item the underlying map $S\to S$ is the identity,
\item the underlying map $C\to C$ is an isomorphism.
\end{enumerate}
\end{definition}

Note that the logarithmic structure on the base is allowed to change.

\subsection{A first example} Let $(C,p)$ be a smooth rational curve equipped with its divisorial logarithmic structure. The natural $\mathbb G_a$ action that stabilizes $p$ yields automorphisms of the logarithmic structure. As a result, logarithmically smooth curves containing a rational component with only one distinguished point will always have infinitely many logarithmic automorphisms.

On the other hand, if $X$ is a semistable curve, the natural torus action coming from scaling the components does induce automorphisms of the logarithmic structure, leading to the next example.

\subsection{A second example} Let $C/S$ be a logarithmic curve over a geometric point, and let $C_1$ and $C_2$ be components of $C$ meeting at a node. In multiplicative notation, the equation of the logarithmic structure at this node is given by $xy = t$, where $t$ is a parameter on the base. Assume that $C_1$ is strictly semistable. We will analyze the circumstances in which the scaling action on $C_1$ may be extended to a logarithmic automorphism of $C/S$. 

The $\mathbb G_m$ action on $C_1$ induces a nontrivial action on the parameter $x$, by scaling it. In order to build a logarithmic automorphism $\varphi$, the new curve $\varphi(C)$ must also satisfy the equation $xy = t$. To achieve this, we must either scale $C_2$ inversely and fix $t$, or additionally scale $t$. 

Assume that $C_2$ is stable. This rules out a possible scaling action of the parameter $y$ to balance the action on $x$. The alternative is to scale $t$, which requires a nontrivial action on the base logarithmic structure. If the base logarithmic structure is minimal, then $t$ is not the smoothing parameter for any other node of $C$ besides the one between $C_1$ and $C_2$, and there is no obstacle to this scaling. We consider the alternative scenario: assume that there exists a third \textbf{stable} component $C_3$ meeting the stable component $C_2$ at a node with local equation $yz = t$. In this case, scaling $t$ does not result in an automorphism, since one of either $C_2$ and $C_3$ would also have to be scaled in order to produce an automorphism of $C/S$. It follows that the standard $\mathbb G_m$ action on $C_1$ does not extend to a logarithmic automorphism of $C/S$. 


\subsection{The general case} The example above illustrates a general situation. When the base logarithmic structure is not minimal the finiteness of automorphisms is not required for finiteness of logarithmic automorphism.

Recall that given a prestable curve $\underline C\to \underline S$ there is a \textbf{minimal} logarithmic structure obtained by pulling back the divisorial structures on the moduli stack of prestable curves and its universal curve. By identifying the stacks of prestable curves and prestable minimal logarithmic curves, we obtain the following. 

\begin{lemma}
Let $C/S$ be a minimal family of prestable logarithmic curves. Then there is an identification
\[
\mathrm{Aut}^{\mathrm{log}}(C/S) \cong \mathrm{Aut}(\underline{C/S}) 
\]
\end{lemma}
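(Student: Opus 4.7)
The plan is to exploit the identification of the stack $\mathfrak{M}$ of prestable curves with the stack of minimal prestable logarithmic curves, which is the assertion preceding the lemma. Equip $\mathfrak{M}$ with its canonical divisorial logarithmic structure coming from the nodal boundary stratification. By the representability theorem for minimal log smooth curves (as in work of Kato and Olsson), giving a minimal log prestable curve over a log scheme $(S, M_S)$ is equivalent to giving a \emph{strict} morphism $(S, M_S) \to \mathfrak{M}$. In particular, $M_S$ is recovered as the strict pullback of the canonical log structure along the underlying classifying morphism $\underline{S} \to \underline{\mathfrak{M}}$, and similarly for $M_C$ via the universal curve $\mathcal{C} \to \mathfrak{M}$.

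With this in hand, I would construct an inverse to the natural forgetful map $\mathrm{Aut}^{\mathrm{log}}(C/S) \to \mathrm{Aut}(\underline{C/S})$. A scheme automorphism of $\underline{C/S}$ over $\underline{S}$ is precisely a 2-automorphism of the classifying morphism $\underline{S} \to \underline{\mathfrak{M}}$. Because $M_S$ and $M_C$ are strict pullbacks from $\mathfrak{M}$ and $\mathcal{C}$, such a 2-automorphism lifts canonically to a 2-automorphism of the associated strict morphism of log stacks, and this lift is exactly a log automorphism of $C/S$ in the sense of the definition above. Injectivity of the forgetful map is equally formal: a log automorphism inducing the identity on $\underline{C/S}$ corresponds to the trivial 2-automorphism of the classifying morphism, hence acts trivially on the pullback log structures.

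The main subtlety worth spelling out is the one flagged in the remark after the definition: log automorphisms are permitted to change the base log structure, whereas scheme automorphisms have no such freedom. In the minimal setting this freedom is illusory. Any new log structure on $S$ produced by the automorphism is again minimal for the (unchanged) underlying curve $\underline{C/S}$, and the universal property of the minimal log structure identifies it canonically with $M_S$; the horizontal isomorphism in the defining square realizes precisely this canonical identification. The examples in the preceding two subsections should be kept in mind as tracking where minimality is essential: the extra $\mathbb{G}_m$-actions there arise from nontrivial rescalings of smoothing parameters of several nodes at once, which minimality forbids since distinct nodes contribute independent generators to the minimal characteristic monoid $\overline{M}_S$.
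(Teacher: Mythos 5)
Your proof is correct and takes the same route the paper gestures at: the paper simply states the lemma as an immediate consequence of the equivalence between the stack of prestable curves and the stack of minimal prestable logarithmic curves, without further argument. You correctly unpack what this equivalence gives: strictness of the classifying morphism lets a $2$-automorphism of $\underline{S}\to\underline{\mathfrak M}$ act canonically on the pulled-back log structures, and the universal property of minimality forces the base log automorphism to be the canonical one, neutralizing the extra freedom allowed by the definition. The one place worth being a touch more explicit is injectivity: a log automorphism with trivial underlying map yields two \emph{a priori} distinct automorphisms of the pair $(M_S,M_C)$ (the one you started with and the identity), and it is the ``unique up to \emph{unique} isomorphism'' rigidity of the minimal log structure, not merely its existence, that forces them to coincide; your final paragraph invokes this, but it is the crux rather than a side remark.
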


\begin{definition}
A logarithmic prestable curve $C/S$ is said to be \textbf{stable as a logarithmic curve} if it has finite logarithmic automorphism group. 
\end{definition}

To characterize such curves, we note the following.

\begin{definition}
Let $C/S$ be a logarithmically smooth curve and let $S$ be a geometric point. Given components $C_1$ and $C_2$ and a path $P$ between them, the \textbf{length} of $P$ is an element in the characteristic $\overline{M}_S$, given by the sum of deformation parameters corresponding to the edges in $P$. 
\end{definition}

A length $\delta\in \overline{M}_S$ is \textbf{strongly stable} if it is the length of a path $P$ between two stable components. 

\begin{definition}
Let $\delta\in \overline M_S$ be a section of the characteristic monoid. We say that $\delta$ is \textbf{stable} if it lies in the $\mathbb Q$-span of the lengths of strongly stable paths of $C$. That is, there exist paths $P_i$ with strongly stable lengths $\delta_i$ and rational numbers $q_i$ such that
\[
\delta = \sum_i q_i \delta_i \in \overline{M}_S\otimes \mathbb Q.
\]
\end{definition}

This notion of stability is stated for sections of $\overline M_S$, but certainly depends on $C$.

\begin{theorem}[{Santos-Parker~\cite[Proposition~6]{KSP-thesis}}]
Let $C/S$ be a logarithmically smooth curve over a geometric point. Let $\sigma_S$ be the dual of the monoid of the base, and let $\mathcal M^{\mathrm{trop}}$ be the cone of tropical curves of combinatorial type equal to that of $C$. The curve has trivial infinitesimal logarithmic automorphisms if and only if all of the following conditions are satisfied:
\begin{enumerate}[(A)]
\item the underlying curve is semistable,
\item the tropical moduli map $\sigma_S\to \mathcal M^{\trop}$ is injective,
\item every strict semistable rational component supports a node with smoothing parameter $\delta$ that is stable in $\overline M_S$.
\end{enumerate}

\end{theorem}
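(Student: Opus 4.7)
The plan is to analyze infinitesimal logarithmic automorphisms through the natural forgetful homomorphism $\mathrm{Aut}^{\mathrm{log}}(C/S) \to \mathrm{Aut}(\underline{C/\underline S})$, whose kernel consists of infinitesimal log automorphisms of the base $S$ that act trivially on $C$. Each of the three conditions will correspond to killing a distinct source of positive-dimensional automorphisms.

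I would first show that (A) is necessary: a rational component with at most one special point carries a positive-dimensional subgroup of $\mathrm{PGL}_2$ (the full $\mathrm{PGL}_2$ in the case of no special points, the affine group $\mathbb{G}_a \rtimes \mathbb{G}_m$ in the case of one special point), and these act fixing the unique (if any) node, so they extend trivially to log automorphisms. Assume now that $\underline C$ is semistable; then $\mathrm{Aut}(\underline{C/\underline S})^{\circ}$ is a torus $T \cong \mathbb{G}_m^{r}$, with one factor per strict semistable rational component $v$, scaling the $\mathbb{P}^{1}$ fixing its two special points.

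For the main analysis, I would work out how a cocharacter $\vec a = (a_v)_{v \in V_{ssr}}$ of $T$ interacts with the log structure. At each node $e$ with local equation $xy = t_e$ the action scales $t_e$ by a weight $w_e(\vec a) \in \mathbb{Z}$ that is linear in $\vec a$: concretely $w_e(\vec a) = \epsilon_u a_u + \epsilon_v a_v$ for signs $\epsilon$ fixed by a choice of orientation on the rational bridges, with the convention $a_u = 0$ if $u \notin V_{ssr}$. A cocharacter $\vec a$ lifts to a log automorphism if and only if the weights $(w_e(\vec a))_e$ factor through a character $\chi : \overline{M}_S^{\mathrm{gp}} \to \mathbb{G}_m$ via $\delta_e \mapsto \lambda^{w_e(\vec a)}$; equivalently, the tuple $(w_e(\vec a))$ vanishes on the kernel of the map $\mathbb{Z}^E \to \overline{M}_S^{\mathrm{gp}}$, $(n_e) \mapsto \sum_e n_e \delta_e$. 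Separately, the kernel of the forgetful map is the group $\mathrm{Hom}(\mathrm{coker}(\mathbb{Z}^E \to \overline{M}_S^{\mathrm{gp}}), \mathbb{G}_m)$ of base log automorphisms fixing $C$, and this vanishes precisely when the tropical moduli map $\sigma_S \to \mathcal{M}^{\trop}$ is injective, i.e.\ when (B) holds.

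Granted (A) and (B), the question reduces to whether any nonzero $\vec a$ can lift. The central computation is a telescoping identity: for any strongly stable path $P$ between two stable vertices,
\[
\sum_{e \in P} w_e(\vec a) = 0
\]
for every $\vec a$, because internal rational-bridge contributions cancel with opposite signs while the stable endpoints contribute zero. Hence whenever $\delta_e = \sum_P q_P \delta_P$ is stable, $w_e(\vec a) = 0$ is forced, so $\chi(\delta_e) = 1$ for any lifting character. Condition (C) then produces, for each rational bridge $v_0$, an adjacent edge $e_0$ with stable $\delta_{e_0}$ and the identity $\epsilon_{v_0, e_0} a_{v_0} + \epsilon_{u_0, e_0} a_{u_0} = 0$. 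Conversely, if (C) fails at some $v_0$, then both adjacent $\delta$'s are non-stable and one can build an explicit character $\chi$ realizing the scaling $\vec a = e_{v_0}$, producing a one-parameter family of log automorphisms.

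The main obstacle lies in the final step: showing that the collection of identities $w_{e_0}(\vec a) = 0$, one per rational bridge supplied by (C), combined with the relations among the $\delta_e$ encoded by (B), genuinely pins $\vec a$ to zero. The subtlety is that an edge joining two rational bridges yields only $a_u = \pm a_v$ rather than $a_u = 0$, so no purely local argument suffices. The right framing is to trace chains of stable edges inside the rational-bridge subgraph and observe that each such chain either terminates at a stable vertex (which forces the accumulated weight to collapse to zero along the chain) or closes into a cycle, in which case the telescoping identity around the cycle combined with the stability assumption produces a relation among the $\delta_e$ that (B) makes incompatible with a nonzero $\vec a$. A careful global bookkeeping of these two cases finishes the equivalence.
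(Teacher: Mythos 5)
Your strategy — reducing (via (A)) to the torus $T$ of scalings of rational bridges, computing the weights $w_e(\vec a)$ at each node, identifying the lifting condition with the existence of a character $\chi:\overline M_S^{\mathrm{gp}}\to\mathbb G_m$ satisfying $\chi(\delta_e)=w_e(\vec a)$, and using the telescoping identity along strongly stable paths — is precisely the ``globalization of the second example'' that the paper says underlies Santos-Parker's argument, so the approach is aligned with the paper's. The setup through the telescoping identity, and the deduction that any liftable $\vec a$ must satisfy $w_e(\vec a)=0$ for every edge $e$ with \emph{stable} $\delta_e$, are correct.

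The final step, however, has a genuine gap, and it is the crux of the theorem. Your dichotomy says a chain of stable edges inside the rational-bridge subgraph either terminates at a stable vertex or closes into a cycle. There is a third case it misses, and it is the generic one: the chain of stable edges may terminate at a rational bridge because the adjacent edge toward the nearest stable vertex has a \emph{non-stable} smoothing parameter. In that case the constraints along the chain only propagate relations of the form $a_u=\pm a_v$ without ever pinning any coordinate of $\vec a$ to zero. Concretely, take dual graph $u_0$---$v_1$---$v_2$---$u_3$ with $u_0,u_3$ stable and $v_1,v_2$ strictly semistable rational bridges, and set $\overline M_S=\mathbb N^2$ with $\delta_{e_0}=(1,0)$, $\delta_{e_1}=(1,1)$, $\delta_{e_2}=(0,1)$. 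Then $\sigma_S\hookrightarrow\mathbb R_{\geq 0}^3$ is injective, so (B) holds; the unique strongly stable path has length $(2,2)$, so $\delta_{e_1}=\tfrac12(2,2)$ is stable while $\delta_{e_0},\delta_{e_2}$ are not, and both $v_1,v_2$ support the node $e_1$, so (C) as stated holds. Yet $\vec a=(1,1)$ lifts: the character $\chi=(1,-1)$ on $\overline M_S^{\mathrm{gp}}=\mathbb Z^2$ gives $\chi(\delta_{e_0})=1=w_{e_0}(\vec a)$, $\chi(\delta_{e_1})=0=w_{e_1}(\vec a)$, $\chi(\delta_{e_2})=-1=w_{e_2}(\vec a)$, so a one-parameter family of logarithmic automorphisms survives. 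Your sketch does not close this case, and the example shows it cannot be closed by ``careful bookkeeping'' of the two cases you list: either the stability notion used in (C) needs to be read more restrictively than the proposal does (e.g.\ requiring the stable node to connect the bridge to a \emph{stable} component, or requiring the entire stable-edge chain to be anchored at a stable vertex), or additional relations in $\overline M_S^{\mathrm{gp}}$ beyond those you extract must be brought to bear. As written, the proposal does not supply the missing argument, so the equivalence is not established.
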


\begin{proof}
We refer the reader to the proof given by Santos-Parker in loc. cit., noting that the essential strategy of the proof is to globalize the observation in the example above. 
\end{proof}

\begin{proposition}
Let $C\to S$ be a stable logarithmic curve over a geometric point. Let $\widetilde{ C}\to  C$ be a logarithmic modification such that the composition $\widetilde C\to S$ is also a family of logarithmic curves. Then $\widetilde  C\to S$ is logarithmically stable.
\end{proposition}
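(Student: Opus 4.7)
The plan is to verify the three conditions (A), (B), (C) of Santos-Parker's theorem for $\widetilde C/S$, inheriting each from the corresponding condition on $C/S$.

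Conditions (A) and (B) transfer by direct inspection. A logarithmic modification corresponds to a polyhedral subdivision of the tropicalization and, at the schematic level, inserts chains of $\mathbb{P}^1$'s at nodes of $C$. Each inserted component is a $\mathbb{P}^1$ carrying exactly two nodes, hence is semistable, and all original components keep their special points; this gives (A). For (B), the natural projection $\mathcal M^{\trop}_{\widetilde C} \to \mathcal M^{\trop}_{C}$ summing the lengths of each chain of refined edges factors the tropical moduli map $\sigma_S \to \mathcal M^{\trop}_{\widetilde C}$ through the injective tropical moduli map for $C$, so injectivity transfers.

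The heart of the proof is condition (C). The key claim, which expresses log stability through a dual linear-algebraic lens, is that log stability of $C/S$ forces the $\QQ$-span $V \subseteq \overline{M}_S \otimes \QQ$ of strongly stable path lengths to equal the whole of $\overline{M}_S \otimes \QQ$. Indeed, any nonzero linear functional $D$ on $\overline{M}_S \otimes \QQ$ vanishing on $V$ corresponds to an infinitesimal deformation of the log structure on $S$ that preserves every strongly stable length. For each chain of strict semistable rational components in $C$ running between two stable components, the node constraints $D(\delta_i) = \mu_{w_i} - \mu_{w_{i-1}}$ uniquely determine compatible infinitesimal scalings $\mu_{w_j}$ of the intermediate $\mathbb{P}^1$'s: the telescoping closure evaluates to $D$ applied to the strongly stable chain length and hence vanishes. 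Assembling these across all chains produces a nontrivial infinitesimal logarithmic automorphism of $C/S$, contradicting stability.

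Granting $V_C = \overline{M}_S \otimes \QQ$, the strongly stable path lengths in $\widetilde C$ coincide with those in $C$, since the sets of stable components agree and each path between them in $\widetilde C$ subdivides a path in $C$ of equal total length. Hence $V_{\widetilde C} = V_C = \overline{M}_S \otimes \QQ$, and every smoothing parameter in $\widetilde C$, being an element of $\overline{M}_S$, automatically lies in $V_{\widetilde C}$. Condition (C) for $\widetilde C$ follows. The main obstacle is therefore the characterization in the previous paragraph: reducing log stability of $C$ to the equality $V = \overline{M}_S \otimes \QQ$ via the construction of infinitesimal automorphisms from functionals annihilating $V$; once this is in place, the proposition is immediate.
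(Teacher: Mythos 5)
Your proof is correct, but it takes a genuinely different route from the paper. The paper argues locally: it reduces to subdividing a single edge $e$ with a new vertex $v_0$, and asserts that because the endpoints $v_1,v_2$ of $e$ ``are stable by hypothesis,'' the new deformation parameters $\delta_1,\delta_2$ on either side of $v_0$ are stable, so $v_0$ inherits condition (C). As written this is fragile on two counts: the endpoints of a subdivided edge need not be stable vertices of $C$ (they may themselves be strict semistable bubbles), and even when they are, the split $\delta_e=\delta_1+\delta_2$ need not be proportional, so the strong stability of $\delta_e$ does not by itself put $\delta_1$ or $\delta_2$ in the $\mathbb{Q}$-span of strongly stable lengths. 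Your argument sidesteps both issues by first establishing the global lemma that log stability of $C/S$ forces the span $V_C$ of strongly stable lengths to equal all of $\overline{M}_S\otimes\mathbb{Q}$: any nonzero functional $D$ annihilating $V_C$ patches into a nontrivial infinitesimal log automorphism, since the telescoping chain constraints on the component scalings are exactly $D$ applied to strongly stable chain lengths, hence vanish. Since the stable components of $\widetilde C$ agree with those of $C$ and paths between them subdivide paths in $C$ of the same total length, $V_{\widetilde C}=V_C=\overline{M}_S\otimes\mathbb{Q}$, so every smoothing parameter in $\widetilde C$ is automatically stable and (C) holds with no case analysis at all. This buys robustness: your lemma gives (C) in a single stroke for arbitrary subdivisions (no reduction to a single edge, no iteration, and no need to track which vertices are stable), and it isolates a useful structural fact about log stable curves that the paper leaves implicit. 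The inheritance of (A) and (B) is the same in both arguments, and your observation that the tropical moduli map for $\widetilde C$ factors the injective map for $C$ through the ``chain-summing'' projection $\mathcal M^{\mathrm{trop}}_{\widetilde C}\to\mathcal M^{\mathrm{trop}}_C$ is a cleaner way to see (B) than the paper offers.
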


\begin{proof}
We check the conditions of the characterization above. All such families arise by pulling back a subdivision on the tropicalization. It suffices to consider the subdivision at a single edge, producing $\widetilde\plC$ from $\plC$. The modified curve is clearly semistable since it is obtained by subdividing the tropical curve. Let $e$ be the subdivided edge, and assume first that $e$ is bounded (i.e. not a marked point). The edge $e$ has endpoints $v_1$ and $v_2$. Let $v_0$ be the newly formed vertex and let $\delta_1$ and $\delta_2$ be the deformation parameters of the edges incident to $v_0$. Since $v_1$ and $v_2$ are stable by hypothesis, the parameters $\delta_1$ and $\delta_2$ are stable parameters. Since $v_0$ supports these parameters, the curve remains stable. If $e$ is an unbounded edge, a similar argument applies with a single new bounded edge and a single new stable parameter supported on the new vertex. 
\end{proof}

Let $X$ be a logarithmic scheme. Given a logarithmic map from a curve, $f: C\to X$, \textbf{the logarithmic automorphisms of the map} are those automorphisms of $C$ that commute with the map. A \textbf{stable logarithmic map} is a logarithmic map whose automorphism group is finite. 

\begin{corollary}\label{cor: log-auts-maps}
Let 
\[
\begin{tikzcd}
C\arrow{r}\arrow{d} & X\\
S.&
\end{tikzcd}
\] 
be a stable logarithmic map over a geometric point. Let $\widetilde{C}\to  C$ be a logarithmic modification such that the composition $\widetilde C\to S$ is also a family of logarithmic curves. The resulting map $\widetilde C\to X$ is logarithmically stable.
\end{corollary}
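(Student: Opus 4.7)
The strategy is to leverage the preceding Proposition by analyzing a natural group homomorphism induced by the modification. Recall that a logarithmic map is stable precisely when its group of logarithmic automorphisms is finite; our goal is to show that $\mathrm{Aut}^{\mathrm{log}}(\widetilde C\to X)$ is finite. Since the modification $\widetilde C\to C$ is a canonical construction, determined by the subdivision of the tropicalization, any logarithmic automorphism of $\widetilde C/S$ descends canonically to a logarithmic automorphism of $C/S$, and this descent is automatically compatible with the map to $X$. We therefore obtain a group homomorphism
\[
\rho\colon \mathrm{Aut}^{\mathrm{log}}(\widetilde C\to X)\longrightarrow \mathrm{Aut}^{\mathrm{log}}(C\to X),
\]
whose target is finite by the stability hypothesis on $C\to X$. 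The problem is reduced to showing that $\ker\rho$ is finite.

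An element of $\ker\rho$ is a logarithmic automorphism $\varphi$ of $\widetilde C/S$ descending to the identity on $C$ and commuting with the composite $\widetilde C\to C\to X$. Being trivial away from the modification locus, such a $\varphi$ is supported on the exceptional components of $\widetilde C\to C$, which are chains of rational curves inserted along subdivided edges of $\plC$. Each such chain contracts to a node of $C$, and hence maps to a single point of $X$; consequently the requirement that $\varphi$ commute with $\widetilde C\to X$ imposes no scheme-theoretic constraint on its restriction to an exceptional component. The only remaining condition is the logarithmic compatibility of $\varphi$ with the base $S$.

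At this stage the argument is exactly the one given in the proof of the preceding Proposition, applied component-wise on the exceptional locus. For each newly inserted vertex $v_0$ on a subdivided edge with adjacent smoothing parameters $\delta_1,\delta_2\in\overline M_S$ summing to the original edge length, any scaling by $\lambda\in\mathbb G_m$ of the corresponding rational component must be absorbed by a compensating shift of $\delta_1,\delta_2$ in $\overline M_S$. The Santos-Parker characterization forces this scaling to be trivial as soon as one of $\delta_1,\delta_2$ is a stable parameter of $\widetilde C$.

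The principal obstacle is that we no longer have logarithmic stability of $C/S$ in isolation, only of the map $C\to X$. The key observation is that the map $f$ effectively stabilizes any rational component of $C$ that is not stable as a curve: such a component must map non-trivially to $X$, and this non-triviality produces the stability data required by the Santos-Parker criterion on $\widetilde C$, via the log structure of $X$. Mirroring the second example of the section, one uses the obstruction coming from $X$ to rule out scalings of the exceptional components that would otherwise violate logarithmic compatibility. Once this verification is in place, $\ker\rho=1$ and the corollary follows.
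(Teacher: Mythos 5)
The overall strategy — factor $\mathrm{Aut}^{\mathrm{log}}(\widetilde C\to X)$ through a restriction map $\rho$ to $\mathrm{Aut}^{\mathrm{log}}(C\to X)$ and then bound the kernel — is sensible, and you correctly identify the genuine subtlety the paper glosses over: the preceding Proposition's hypothesis is stability of $C/S$ \emph{as a log curve}, whereas the corollary's hypothesis is only stability of the \emph{map} $C\to X$. The paper states the corollary without proof, implicitly treating it as the same argument, and you are right to notice that the hypotheses don't literally match.

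However, the execution has a real gap. The claim that an unstable rational component of $C$ ``must map non-trivially to $X$'' is false. A strictly semistable rational component (two special points) mapping to a point of $X$ is perfectly compatible with the map $C\to X$ being logarithmically stable: the $\mathbb G_m$-scaling on that component fails to extend to a logarithmic automorphism as soon as one of its smoothing parameters is stable in the sense of Santos-Parker, and that stability can come from the log structure on $S$ alone, with no input from the map. Since this is exactly the mechanism the Proposition uses, one cannot dichotomize components of $C$ into ``stable as a curve'' versus ``stabilized by $f$ because they map non-trivially.'' Consequently the final sentence, asserting that once ``this verification is in place'' one gets $\ker\rho=1$, is not a verification at all — it is exactly the point where the argument must confront the Santos-Parker condition~(C) for $\widetilde C/S$, and you have not identified a supply of strongly stable paths (or their $\mathbb Q$-span) that covers the newly inserted smoothing parameters when the endpoints $v_1,v_2$ of the subdivided edge are not stable vertices.

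A second, smaller issue: $\ker\rho=1$ is stronger than necessary, and is not in fact what the Santos-Parker mechanism gives you; at best you would show the kernel is finite. Finally, the well-definedness of $\rho$ — that a logarithmic automorphism of $\widetilde C/S$ commuting with $\widetilde C\to X$ descends to one of $C/S$ commuting with $C\to X$ — deserves a sentence: it holds because the stabilization $\widetilde C\to C$ is canonical once the combinatorial type is fixed, and the automorphism preserves the dual graph and the map, hence the exceptional locus. As written the step is merely asserted.

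To repair the argument you would need either (i) a version of the Santos-Parker characterization adapted to log \emph{maps}, in which ``stable vertex'' is replaced by ``vertex that is either stable or whose scaling is ruled out by commutation with $f$,'' and then run the Proposition's argument with that enlarged set of stable vertices; or (ii) a direct computation showing that the scalings on an exceptional chain, subject to fixing the total length $\delta_e$ in $\overline M_S$ and commuting with the (constant) map on that chain, form a finite group. Either route is workable, but neither is carried out in the proposal.
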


\begin{remark}(Semistable curves)  Logarithmic curves with strictly semistable components, but finite automorphism group, have appeared in the literature. The moduli spaces of radially aligned curves constructed in~\cite{RSW17A} are examples, where every curve admits \textit{canonical} rational bubbling depending on the tropical structure. A related phenomenon appears in Smyth's genus $1$ curves with Gorenstein singularities~\cite[Corollary~2.4]{Smyth}. They are also a constant presence in Parker's exploded manifolds.
\end{remark}

\section{Combinatorial semistable reduction}\label{sec: comb-ss-reduction}

We construct tropical moduli stacks of maps that mimic the requirements of Theorem~\ref{thm: transverse-maps}. In this section, we will use $\Sigma$ to be a cone complex. It will play the role of the target in the moduli problem for maps, just as $X$ plays such a role on the geometric side. We occasionally use $\Sigma_X$ to denote the cone complex associated to $X$, in order to stress the connection to geometry. 


\subsection{Tropical maps: static target} Let $\Sigma$ be a rational cone complex in the sense of~\cite[Chapter 2,\S1]{KKMSD}. The reader may wish to assume that $\Sigma$ is a fan. Given a tropical curve $\plC$, a \textbf{tropical map} is a morphism of rational cone complexes 
\[
F: \plC\to \Sigma.
\]
In other words $F$ is continuous and maps each polyhedron (i.e. vertex or edge) of $\plC$ to exactly one polyhedron in $\Sigma$. Moreover, upon restriction to any edge $e$ of $\plC$, the map $F$ is piecewise linear. In particular, given an edge $e$ of $\plC$ it maps into a cone $\sigma_e$. If it is non-contracted, then its image is a segment, which lies on a line $L_e$ in the vector space $\sigma_e^{\mathrm{gp}}$. The induced map $e\to L_e$ has a well-defined slope up to a sign depending on the orientation of the edge. We refer to its absolute value as the \textit{expansion factor} of $F$ along $e$. 

If $\plC\to \Delta$ is a family of tropical curves over a cone complex $\Delta$, a family of tropical maps is a diagram of polyhedral complexes
\[
\begin{tikzcd}
\bplC\arrow{dr}\arrow{rr} & & \Sigma\times \Delta\arrow{dl} \\
& \Delta. &
\end{tikzcd}
\]
Let $e_p$ be a marked end of $\plC$. Let $\plC\to \Sigma$ be a tropical map with cone $\sigma_p$ containing $e_p$. 

\begin{definition}
The \textbf{contact order of $p$}, denoted $c_p$, is the scalar multiple of the primitive integral vector on the ray in $\sigma_p$ parallel to the image of $ e_p$ by the slope along $e_p$. If $e_p$ is contracted, declare its contact order to be zero. The contact order takes values in $\Sigma(\mathbb N)$. 
\end{definition}

\subsection{Tropical maps: moving target} We introduce the combinatorial target expansions. 

\begin{definition}\label{def: tropical-expansion}
A \textbf{tropical expansion} of $\Sigma$ over $\Delta$ is a cone complex $\widetilde \Sigma$ together with morphisms of cone complexes
\[
\begin{tikzcd}
\widetilde \Sigma \arrow{r}{\pi}\arrow{d} & \Sigma \\
\Delta &
\end{tikzcd}
\]
satisfying the following conditions
\begin{enumerate}[(1)]
\item {\bf Equidimensionality.} Every cone of $\widetilde \Sigma$ surjects onto a cone of $\Delta$.
\item {\bf Reducedness.} The image of the lattice of any cone $\sigma\in\widetilde \Sigma$ is equal to the lattice of the image cone $\pi(\sigma)$ in $\Delta$.
\item {\bf Modification.} For each point $p$ of $\Delta$ the map $\widetilde \Sigma_p\to \Sigma$ is an embedding of a subcomplex of a polyhedral subdivision of $\Sigma$.
\item {\bf Generic isomorphism.} The fiber over $0$ of $\widetilde \Sigma$, i.e. $\pi^{-1}(0)\subset \widetilde \Sigma$, maps isomorphically onto a union of faces in $\Sigma$. 
\end{enumerate}
\end{definition}

\noindent
The terms \textbf{equidimensionality} and \textbf{reducedness} reflect the following geometric situation. If $f:X\to B$ is a toroidal morphism of toroidal embeddings, the induced morphism $F$ of cone complexes satisfies the conditions above if and only if the map $f$ satisfies the corresponding algebro-geometric conditions, see~\cite[Lemmas~4.1 {\it \&}~5.2]{AK00}. If both are satisfied, the morphism is weakly semistable, and therefore flat~\cite[Theorem~2.1.5]{Mol16}.

\begin{remark}
If the cone complexes $\widetilde \Sigma$, $\Sigma$, and $\Delta$ above are fans embedded in a vector space, not necessarily complete, with linear maps between them, toric geometry outputs a family $ \mathcal Y(\widetilde \Sigma)\to S(\Delta)$. It is a flat family of broken toric varieties over the toric base $S(\Delta)$ associated to the fan $\Delta$. The generic fiber is the toric variety associated the preimage of $0\in\Delta$ of the map $\widetilde \Sigma\to \Delta$. By hypothesis, this is an open invariant subvariety of the toric variety $Y(\Sigma)$ associated to $\Sigma$. In general, since we do not demand that the fibers of a tropical expansion are complete decompositions, the fibers of the flat family need not be proper. 
\end{remark}

\begin{remark}
Given a point of $\Delta$, the fiber in $\widetilde \Sigma$ is a polyhedral subcomplex of a subdivision of $\Sigma$. The third condition allows us to take subdivisions and then discard higher dimensional cones if desired. The edge lengths of the bounded cells of this subdivision vary in a piecewise linear fashion based on the parameters in the base. An example of a family of polyhedral subdivisions over a one-dimensional cone is see in Figure~\ref{fig: subdivision-family}. 

\begin{figure}
\includegraphics[scale=0.35]{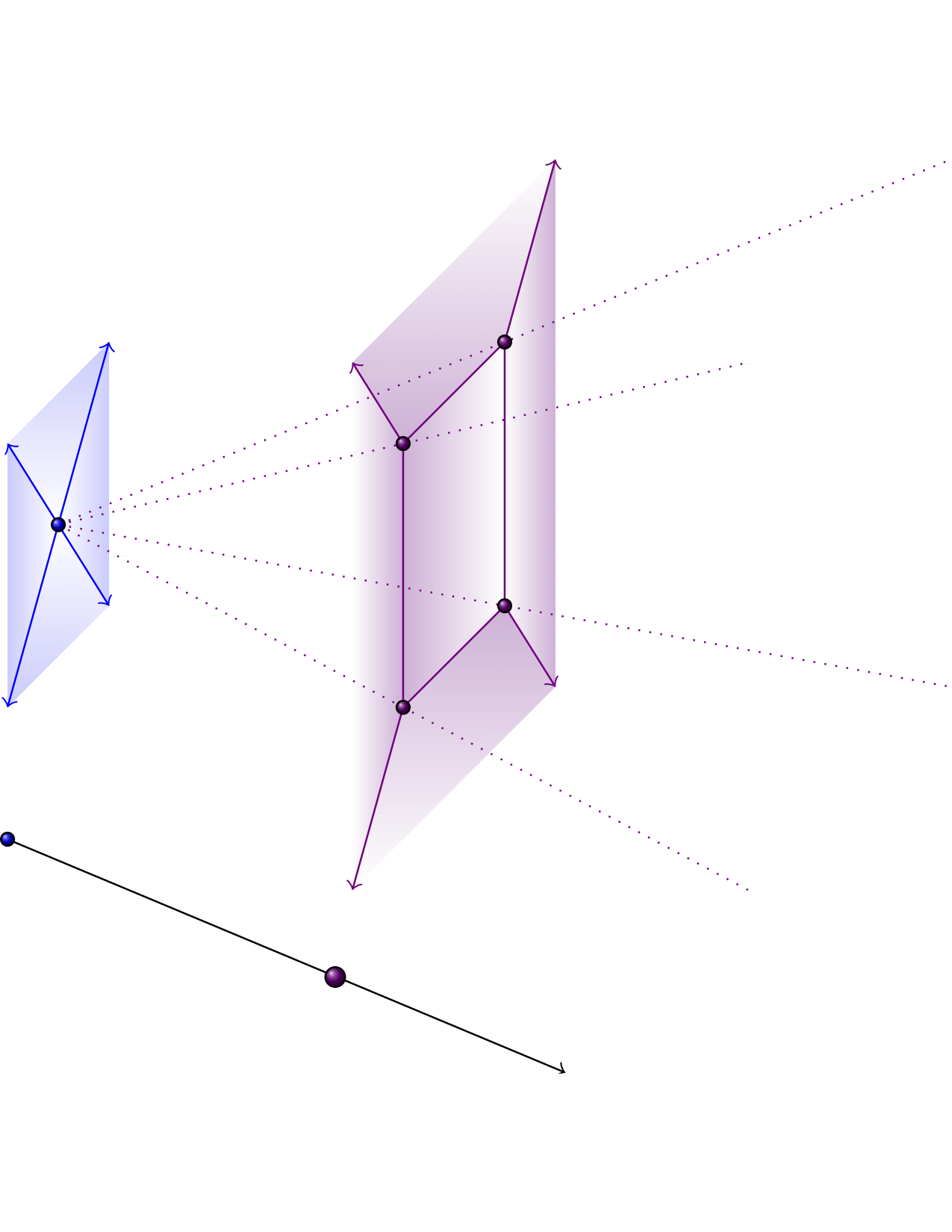}
\caption{A family of tropical expanded targets over $\RR_{\geq 0}$. The fiber over $0$ is dual to a family of $\PP^1\times\PP^1$'s, while the nontrivial fiber that is shown is dual to a union of four $\PP^2$'s glued along boundary curves, and all four $\PP^2$'s meet at a point. }\label{fig: subdivision-family}
\end{figure}

\end{remark}

\subsection{Conventions on combinatorial boundedness} A basic result in the logarithmic theory of stable maps is combinatorial finiteness. After fixing numerical invariants, there exist finitely many combinatorial types of tropical stable maps which can arise as combinatorial types of logarithmic stable maps to a fixed target~\cite[Section~3.1]{GS13}. When we speak of tropical moduli stacks, we fix a finite set of combinatorial types, closed under specialization. This guarantees that the tropical stacks are of finite type. The combinatorial type here is the data comprising of the stable dual graph of the source curve, the strata to which each component maps, the curve classes of the components, and the contact orders of edges and half edges corresponding to nodes and marked points. See~\cite{ACGS15}

\subsection{Tropical moduli stacks} Let $\Sigma$ be a cone complex. Fix the discrete data $\Gamma = (g,n,\bm{c},L)$, where $g$ is the genus, $n$ is the number of unbounded edges, $\bm{c}$ is the contact order of the markings, and $L$ is a finite partially ordered set of combinatorial types, closed under specialization of types. An \textbf{automorphism} of a map $\plC\to \Sigma$ is an automorphism of $\plC$ commuting with the map to $\Sigma$. 

We study moduli spaces of tropical maps to $\Sigma$ having combinatorial type described by $\Gamma$. The moduli problem determines a {combinatorial stack} in the formalism of Cavalieri--Chan--Ulirsch--Wise~\cite[Definition~2.3.5]{CCUW}. 

Let $\mathbf{RPC}$ be the category of rational polyhedral cones. The definitions give rise to a tropical moduli functor
\[
\mathsf{ACGS}_\Gamma(\Sigma): \mathbf{RPC}^{\mathrm{op}}\to \mathbf{Groupoids},
\]
by associating to a cone $\tau$ the groupoid of tropical prestable maps $\plC\to \Sigma$ over $\tau$ with discrete data $\Gamma$. The following is a modest adaptation of the results in loc. cit.

\begin{lemma}[{\cite[Adaptation of Theorem~1]{CCUW}}]
There is a stack $\mathsf{ACGS}_\Gamma(\Sigma)$ over the category of cone complexes representing the fibered category over cones of tropical stable maps to $\Sigma$ with discrete data $\Gamma$. 
\end{lemma}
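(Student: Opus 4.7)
The strategy is to directly extend the construction of Cavalieri--Chan--Ulirsch--Wise by enriching the indexing data. In loc. cit., the moduli stack $\mathsf{M}_{g,n}^{\trop}$ is built as a colimit, over the category of stable weighted graphs and contractions, of the moduli cones $\RR_{\geq 0}^{E(G)}$ parameterizing edge lengths. I would proceed analogously, with stable graphs replaced by the richer combinatorial types of tropical maps.

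First, I would make explicit the data encoded by a combinatorial type $\theta \in L$: a stable marked weighted graph $(G_\theta, g, m)$, and for each vertex $v$ a cone $\sigma_v \in \Sigma$ containing the image of $v$; for each edge $e$ a cone $\sigma_e \in \Sigma$ having $\sigma_{v_1(e)}$ and $\sigma_{v_2(e)}$ as faces; and a primitive integral expansion vector $u_e \in \sigma_e^{\mathrm{gp}}$ (zero for contracted edges), with the contact orders on unbounded edges prescribed by $\bm c$. For each type $\theta$ I would then construct a cone $\tau_\theta$ representing the subfunctor of maps of type $\theta$ as follows: embed $\tau_\theta$ into $\prod_{v} \sigma_v \times \RR_{\geq 0}^{E(\theta)}$ and cut out by the piecewise-linearity equations $F(v_2(e)) - F(v_1(e)) = u_e \cdot \ell(e)$ on each bounded edge. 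These are linear with integer coefficients, so $\tau_\theta$ is a rational polyhedral cone, and it visibly represents the functor over cones.

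Next, I would turn the specialization relations in $L$ (contraction of a bounded edge, merger of the image cones of two vertices, or replacement of some $\sigma_v$ by a face) into face maps $\tau_{\theta'} \hookrightarrow \tau_\theta$ by setting contracted edge lengths to zero and identifying merged vertex coordinates. These assemble into a functor $L^{\mathrm{op}} \to \mathbf{RPC}$, and the colimit in the $2$-category of cone stacks in the sense of CCUW is declared to be $\mathsf{ACGS}_\Gamma(\Sigma)$. The stack axioms over $\mathbf{RPC}$ then reduce, via the equivalence in loc. cit. between cone stacks and combinatorial stacks, to two separate descent statements: descent of the underlying family of tropical curves, which is the content of CCUW, and descent of the vertex-position and slope data, which is immediate because $\Sigma$ is a fixed cone complex over which all our maps land. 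Finite type is automatic from the hypothesis that $L$ is finite.

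The main obstacle, though relatively mild, is to verify that the maps $\tau_{\theta'} \hookrightarrow \tau_\theta$ are genuine face inclusions of rational polyhedral cones and that they compose correctly under iterated specialization. Concretely, when a bounded edge $e$ of type $\theta$ is contracted, one must confirm that the linear relation $F(v_2(e)) - F(v_1(e)) = u_e \ell(e)$ degenerates to the identification of the two vertex coordinates in a way compatible with the surrounding expansion relations, and that sequential contractions commute up to canonical isomorphism. This is a purely linear-algebraic book-keeping exercise with the expansion vectors, directly parallel to the type stratification of logarithmic stable maps carried out in \cite[Section~3]{GS13}, and once dispatched the stack structure follows formally from CCUW.
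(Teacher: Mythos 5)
Your proposal follows the same route the paper intends: the lemma is stated as a ``modest adaptation of the results in loc.\ cit.'' with no proof given, and your colimit-of-type-cones construction, in direct parallel with the CCUW construction of $\mathsf{M}_{g,n}^{\trop}$ and the Gross--Siebert type stratification, is exactly the intended adaptation. The one small notational caution is that the equation $F(v_2(e)) - F(v_1(e)) = u_e\cdot\ell(e)$ is correct only if $u_e$ carries the expansion factor along with it; if $u_e$ is taken to be the primitive generator (as the phrase ``primitive integral expansion vector'' suggests) then the slope $m_e$ must appear as a separate multiplier in the linear relation, but this does not affect rationality of the cones or any other step of the argument.
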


\subsection{Combinatorial transversality} A basic transversality statement motivates what follows. Let $X$ be a toroidal embedding and $C\to X$ a logarithmic stable map. Consider its tropicalization
\[
\plC\to \Sigma_X.
\]
We seek a tropical condition for the underlying map $C\to X$ to be {transverse} in the following sense: (i) the complement of the marked points $C^\circ\subset C$ maps into the open stratum of $X$, and the marked points map to the complement of the closed codimension $2$ strata in $X$. This holds precisely when the image of every vertex of $\plC$ is the unique vertex $0$ of $\Sigma_X$, and the image of every unbounded edge is a ray of $\Sigma_X$. For a more general map, vertices and edges of $\plC$ may map into high dimensional cones of $\Sigma_X$. Geometrically, these are situations where components and nodes of $C$ map into high codimension boundary strata of $X$.

Toroidal blowups of $X$, which are \textit{conical} subdivisions of $\Sigma_X$, do not improve the situation. However, given a general map $\plC\to \Sigma_X$, if we are allowed to make \textit{polyhedral} subdivisions that are not necessarily conical, the image of every vertex can be arranged to be a vertex, and the image of every edge an edge. We refer to this as \textbf{combinatorial transversality}. Such polyhedral subdivisions determine expansions of the target. A read who is unfamiliar with such expansions may wish to consult~\cite[Section~3]{NS06} or~\cite[Sections 6 and 7]{Gub13}. A discussion of subdivisions may be found in~\cite{AW,Kato94,KKMSD}. 

\subsection{The construction on standard points} The main construction is a more delicate version of the following simple ``pointwise'' construction. Given a single tropical stable map $\plC\to \Sigma$, viewing $\plC$ as a metric graph, one can choose a polyhedral -- not necessarily conical -- subdivision $\widetilde\Sigma$ such that the image of $\plC$ is the support of a polyhedral subcomplex, see Figure~\ref{fig: subdivision}. Once this subdivision is made, the map $\plC\to \widetilde \Sigma$ may not be polyhedral. To remedy this, a subdivision to $\plC$ can be made by adding vertices to $\plC$ at the preimages of the vertices of $\widetilde\Sigma$. 

\begin{figure}[h!]
\includegraphics[scale=0.4]{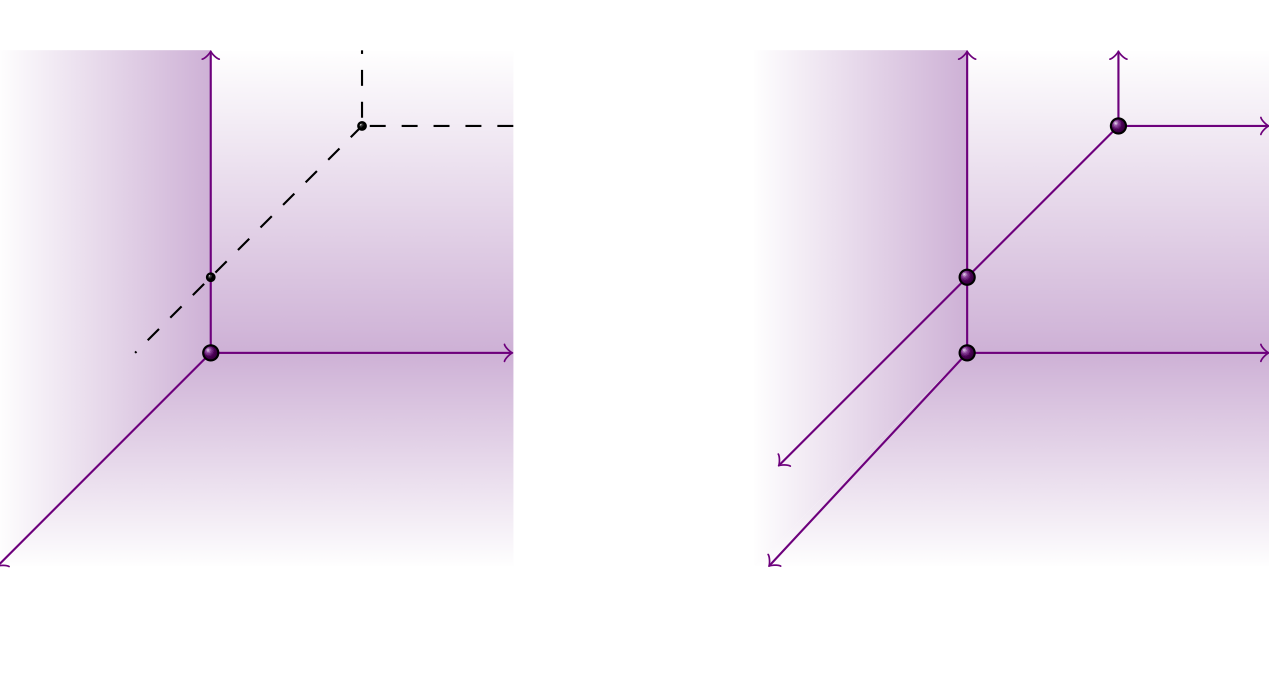}
\caption{The dashed figure on the left is a tropical curve in the fan $\Delta_{\PP^2}$. It is clearly not combinatorially transverse because the vertices and edges of the tropical curve do not map onto polyhedral subcomplexes of the target. There is a natural subdivision of this on the right, obtained by introducing new vertices and edges. The tropical curve on the right is not drawn on the right, but it is the one obtained by composition with the subdivision.}
\label{fig: subdivision}
\end{figure}

\begin{remark}
There always exists a subdivision of the target to make a map transverse. Fix a tropical map $\plC\to \Sigma$. Replace $\Sigma$ with a conical subdivision so that it becomes simplicial. Since the question is local, it suffices to deal with the case where $\Sigma$ is $\RR_{\geq 0}^n$. For each of the $n$ projections to $\RR_{\geq 0}$, choose a polyhedral subdivision $\mathsf{T_i}$ such that the image of every vertex is a vertex, and the image of every noncontracted bounded edge is a union of bounded edges. The product of the $\mathsf T_i$ yields a cubical subdivision of $\RR_{\geq 0}^n$ where the image of every vertex is a vertex. The image of an edge is the diagonal in a facet connecting two edges. By further triangulating, we guarantee that the image of each edge is an edge in the subdivision, as required. By passing to the $1$-skeleton of this subdivision, and if necessary discarding unnecessary vertices and edges, we obtain a non-complete polyhedral subdivision of $\Sigma$. 
\end{remark}

Geometrically, this procedure expands the target to accommodate a given one-parameter family of curves with smooth general fiber. The same is true if the base is a finite rank valuation ring, or more generally a valuativized logarithmic scheme~\cite{KatoLDDT}. In these cases, the new families are guaranteed to be flat. Over a general base this ceases to be true, and the base must be modified to ensure flatness.

\subsection{The general case}\label{sec: transverse-maps} We establish the tropical version of Theorem~\ref{thm: transverse-maps} by globalizing the construction above. The tropical moduli stack comes with a curve and map diagram: 
\begin{equation}\label{eqn: universal-tropical}
\begin{tikzcd}
\bplC \arrow{rr}{F} \arrow{dr} & & \mathsf{ACGS}(\Sigma)\times\Sigma \arrow{dl}\\
& \mathsf{ACGS}(\Sigma). &
\end{tikzcd}
\end{equation}
The construction will subdivide the base, curve, and target to obtain a universal family of stable maps to a family of expansions of $\Sigma$, such that the universal map is transverse to the universal target.

\begin{theorem}\label{thm: main-comb-thm}
There exists a diagram of cone stacks
\[
\begin{tikzcd}
\bplC^\lambda \arrow{rr}{F^\lambda} \arrow{dr} & & \widetilde\Sigma^\lambda \arrow{dl}\\
& \mathsf{K}^\lambda_\Gamma(\Sigma). &
\end{tikzcd}
\]
where each position in the diagram is a subdivision of the corresponding position in Equation~(\ref{eqn: universal-tropical}). Furthermore,
\begin{enumerate}[(1)]
 \item {\bf Equidimensionality.} For both vertical arrows, every cone of the source surjects onto a cone of $\mathsf{K}^\lambda_\Gamma(\Sigma)$.
\item {\bf Reducedness.} For both vertical arrows, the image of the lattice of any cone $\sigma$ is equal to the lattice in the image cone in $\mathsf{K}^\lambda_\Gamma(\Sigma)$.
\item {\bf Transversality.} The image of $F^\lambda$ is a union of faces of $\widetilde\Sigma^\lambda$.
\end{enumerate}
\end{theorem}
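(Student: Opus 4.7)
The plan is to build the requisite diagram in three stages, using the pointwise construction sketched above as a template and invoking Abramovich--Karu weak semistable reduction \cite{AK00} to enforce the equidimensionality and reducedness conditions.

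First, I would produce a preliminary transverse diagram. Because the combinatorial type poset $L$ is finite and closed under specialization, $\mathsf{ACGS}_\Gamma(\Sigma)$ decomposes into finitely many cones indexed by types. Working cone-by-cone, beginning with the deepest, I would apply the pointwise construction: polyhedrally subdivide $\Sigma$ so that the image of the universal tropical curve over this cone becomes a union of faces, and then subdivide the curve to make the map polyhedral on each edge. Gluing these choices along common faces of $\mathsf{ACGS}_\Gamma(\Sigma)$ yields a map of cone stacks $F^\lambda_0: \bplC^\lambda_0 \to \widetilde\Sigma^\lambda_0$ over a subdivision $\mathsf{K}^\lambda_0 \to \mathsf{ACGS}_\Gamma(\Sigma)$ whose image is fiberwise a union of faces.

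Second, I would enforce the equidimensionality and reducedness conditions on the right vertical arrow. Apply \cite[Section~4]{AK00} to the morphism $\widetilde\Sigma^\lambda_0 \to \mathsf{K}^\lambda_0$: there exists a subdivision $\mathsf{K}^\lambda \to \mathsf{K}^\lambda_0$ such that after base change the morphism $\widetilde\Sigma^\lambda \to \mathsf{K}^\lambda$ becomes weakly semistable, which is exactly conditions (1) and (2) of the theorem for the right arrow. Since this step only refines the base and leaves the image of the curve in each fiber unchanged, transversality persists. To handle the left vertical arrow, pull the curve back to $\mathsf{K}^\lambda$, insert vertices at preimages of the new vertices of $\widetilde\Sigma^\lambda$ so that $F^\lambda$ is still polyhedral and image-of-faces, and apply Abramovich--Karu once more to $\bplC^\lambda \to \mathsf{K}^\lambda$ to guarantee its own equidimensionality and reducedness.

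The main obstacle will be the gluing at the end of the first stage: subdivisions of $\Sigma$ chosen independently over different maximal cones of $\mathsf{ACGS}_\Gamma(\Sigma)$ must agree along common faces. The saving observation is that specialization from a combinatorial type $\tau$ to a face $\tau' \prec \tau$ corresponds to contracting edges of the tropical source, and the subdivision of $\Sigma$ chosen over $\tau$ restricts to a valid subdivision over $\tau'$ once these contracted edges are accounted for. This allows an inductive construction along the poset $L$, at the cost of possibly refining our earlier choices on low-dimensional strata. A secondary check is that the two Abramovich--Karu modifications preserve transversality; this follows because weak semistable reduction acts by refining the base, without altering the image of any polyhedral map in the family.
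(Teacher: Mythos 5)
Your overall strategy — subdivide the target so the image is a subcomplex, subdivide the curve, then invoke Abramovich--Karu to control the base — is the right skeleton, and it matches the paper's broad plan. But there is a real gap in your first stage, and it is exactly where you flag the ``main obstacle.''

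You propose to construct the target subdivision cone-by-cone over $\mathsf{ACGS}_\Gamma(\Sigma)$ and then glue the local choices. The saving observation you offer only addresses \emph{one} half of the gluing problem: that a subdivision over a cone $\tau$ restricts to \emph{some} subdivision over a face $\tau'\prec\tau$. It does not address the compatibility issue: if $\tau'$ is a common face of two distinct cones $\tau_1$ and $\tau_2$, the two restrictions need not agree, and there is no reason a pointwise construction carried out independently over $\tau_1$ and over $\tau_2$ would produce matching restrictions. Your proposed fix, ``refining our earlier choices on low-dimensional strata,'' then forces you to propagate the refinement back into $\tau_1$ and $\tau_2$ and from there into their other faces; with the ``deepest first'' ordering you adopt, this cascading refinement has no obvious termination. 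Processing cones in \emph{increasing} dimension, together with a relative-extension lemma for subdivisions (extend a prescribed boundary subdivision into the interior of a cone while achieving transversality there), would close the gap, but your writeup neither states nor proves such a lemma.

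The paper avoids the gluing problem altogether. Rather than subdividing fiber-by-fiber over $\mathsf{ACGS}_\Gamma(\Sigma)$, it subdivides the single cone complex $\mathsf{ACGS}_\Gamma(\Sigma)\times\Sigma$ all at once, by building a \emph{global} piecewise-linear function $\psi = \sum_\sigma \overline\psi_\sigma$, where each $\psi_\sigma = -\sum_{\tau,j}\lvert\ell_{\tau,j}\rvert$ is the Abramovich--Karu function supported on a maximal cone $\sigma$ of the target and $\overline\psi_\sigma$ is any global piecewise-linear extension. The domains of linearity of the single global function $\psi$ are automatically compatible across faces, so no gluing lemma is needed. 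The curve is then subdivided by pulling $\psi$ back, and the base is fixed afterwards by a second application of~\cite[Lemma~4.3 {\it\&} Prop.~4.4]{AK00}.

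A secondary imprecision: your claim that the semistable-reduction step ``only refines the base and leaves the image of the curve in each fiber unchanged'' is too loose. In Abramovich--Karu the total space is subdivided as well; what makes the argument work is that once the image of every cone of the curve is already a union of cones in the target, the base modification and its pullback are cofibered, so fiberwise transversality survives. You should say this rather than assert that the total space is untouched.
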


\begin{proof}
The basic idea of the proof is to perform weak semistable reduction for the universal map at a combinatorial level, following~\cite[Sections 4 {\it \&} 5]{AK00}. We guidepost the proof based on the intended geometric property that each step reflects. 

\noindent
{\bf I: Target expansion.} The morphism 
\[
F: \bplC \to \mathsf{ACGS}(\Sigma)\times\Sigma
\] 
is linear on each cone of the source. Given a cone $\tau$ of $\bplC$, the image $F(\tau)$ is a conical subset of $\mathsf K_\Gamma(\Sigma)\times\Sigma$. Choose a subdivision of $\widetilde\Sigma^{\mathrm{aux}}$ of $\mathsf{ACGS}(\Sigma)\times\Sigma$ such that the image of every cone of $\bplC$ is a union of cones. This can be done using a procedure developed by Abramovich and Karu, see~\cite[Lemma~4.3]{AK00}. Assume first that $ \mathsf{ACGS}(\Sigma)\times\Sigma$ consists of a single maximal cone $\sigma$. Then the image $F(\tau)$ is a strictly convex conical subset of $\sigma$. This subset is equal to the intersection of half-spaces given by linear functions $\ell_{\tau,j}$. The function $\psi_\sigma = -\sum_{\tau,j}|\ell_{\tau,j}|$ is piecewise linear, and the domains of linearity define a subdivision of $\sigma$ such that $\tau$ maps to a cone. 

Once a subdivision of $\sigma$ is made such that the image of a cone is a union of cones, a further refinement does not affect the property. Repeat this procedure for all cones mapping to $\sigma$, thus defining a subdivision where the image of every cone is a \textit{union} of cones. Now choose an arbitrary piecewise linear function $\overline \psi_\sigma$ on the full target $\mathsf{ACGS}(\Sigma)\times\Sigma$ extending the function $\psi_\sigma$ constructed above. Consider the piecewise linear function
\[
\psi = \sum_{\sigma} \overline \psi_\sigma.
\]
The domains of linearity of $\psi$ give the requisite subdivision such that the image of every cone of the curve is a union of cones of the target. Call this subdivision of the target $\widetilde\Sigma^{\mathrm{aux}}$

\noindent
{\bf II: Transversality.} Pull back the piecewise linear function $\psi$ to $\bplC$. Its bending locus yields a subdivision of the source curve, and the resulting map sends cones onto cones. Pass to the subcomplex $\widetilde\Sigma^{\mathrm{aux}}_\circ$ of $\widetilde\Sigma^{\mathrm{aux}}$ that contains the images of the cones of $\bplC^{\mathrm{aux}}$. The newly constructed morphism of cone complexes
\[
F^{\mathrm{aux}}: \bplC^{\mathrm{aux}}\to \widetilde\Sigma^{\mathrm{aux}}_\circ
\]
has the property that the image of every cone of $\bplC^{\mathrm{aux}}$ is a cone of $\widetilde\Sigma^{\mathrm{aux}}_\circ$, and the map is surjective.

\noindent
{\bf III: Fixing the base.} We have thus far ignored the base. The modification $\widetilde\Sigma^{\mathrm{aux}}_\circ$ maps surjectively onto $\Kbar_\Gamma(\Sigma)$. Indeed, the map factors through the product $\mathsf{ACGS}_\Gamma(\Sigma)\times\Sigma$. Another application of~\cite[Lemma~4.3 {\it\&} Proposition 4.4]{AK00} yields subdivisions of the target and base
\[
\widetilde\Sigma^{\lambda}_\circ\to \Kbar^\lambda_\Gamma(\Sigma)
\]
such that the image of every cone of the source is a cone of the target. Pull back this subdivision of $\widetilde\Sigma^{\lambda}_\circ\to \widetilde\Sigma^{\mathrm{aux}}_\circ$ to the curve to obtain $\bplC^\lambda$. The image of any cone of $\bplC^\lambda$ is a cone in $ \mathsf{K}^\lambda_\Gamma(\Sigma)$ by commutativity of the diagram. Finally, by refining the lattice on the base we guarantee that the reducedness condition in the statement.
\end{proof}

In the transversality step, we passed from $\widetilde \Sigma_\circ^{}$ to the subcomplex $\widetilde \Sigma_\circ^{\mathrm{aux}}$. The step can be skipped without significant consequences, but its effect here is that we produce a non-proper target expansion that nonetheless contains the image of the curve. 

\begin{remark}
The proof above makes no reference to the economics of the chosen subdivisions. For computational purposes, particular choices may be advantageous, A systematically understanding is likely to prove important in later work. 
\end{remark}

\subsection{Smooth divisor: $\mathsf{Li}$ from $\mathsf{ACGS}$} We specialize the construction to the case considered traditionally, where $D$ is a smooth divisor. Combinatorially, the fan $\Sigma$ is $\RR_{\geq 0}$. At each point of $\mathsf{ACGS}(\RR_{\geq 0})$ there is a piecewise linear map
\[
F: \plC\to\RR_{\geq 0},
\]
For each vertex $v$ in $\plC$, its image $F(v)$ is generally not the vertex of $\RR_{\geq 0}$. The images of all vertices of $\plC$ form the vertices in a unique polyhedral subdivision $\mathsf{T}$ of $\RR_{\geq 0}$. Geometrically, this canonically determines an expansion of the target in the traditional sense, by pulling back the subdivision along the tropicalization. However, the map $\plC \to \mathsf{T}$ no longer maps cones to cones, see Figure~\ref{fig: target-modification}. To remedy this, the subdivision may be pulled back to $\plC$ to obtain a subdivision of the source curve. 

\begin{figure}[h!]
\includegraphics[scale=0.175]{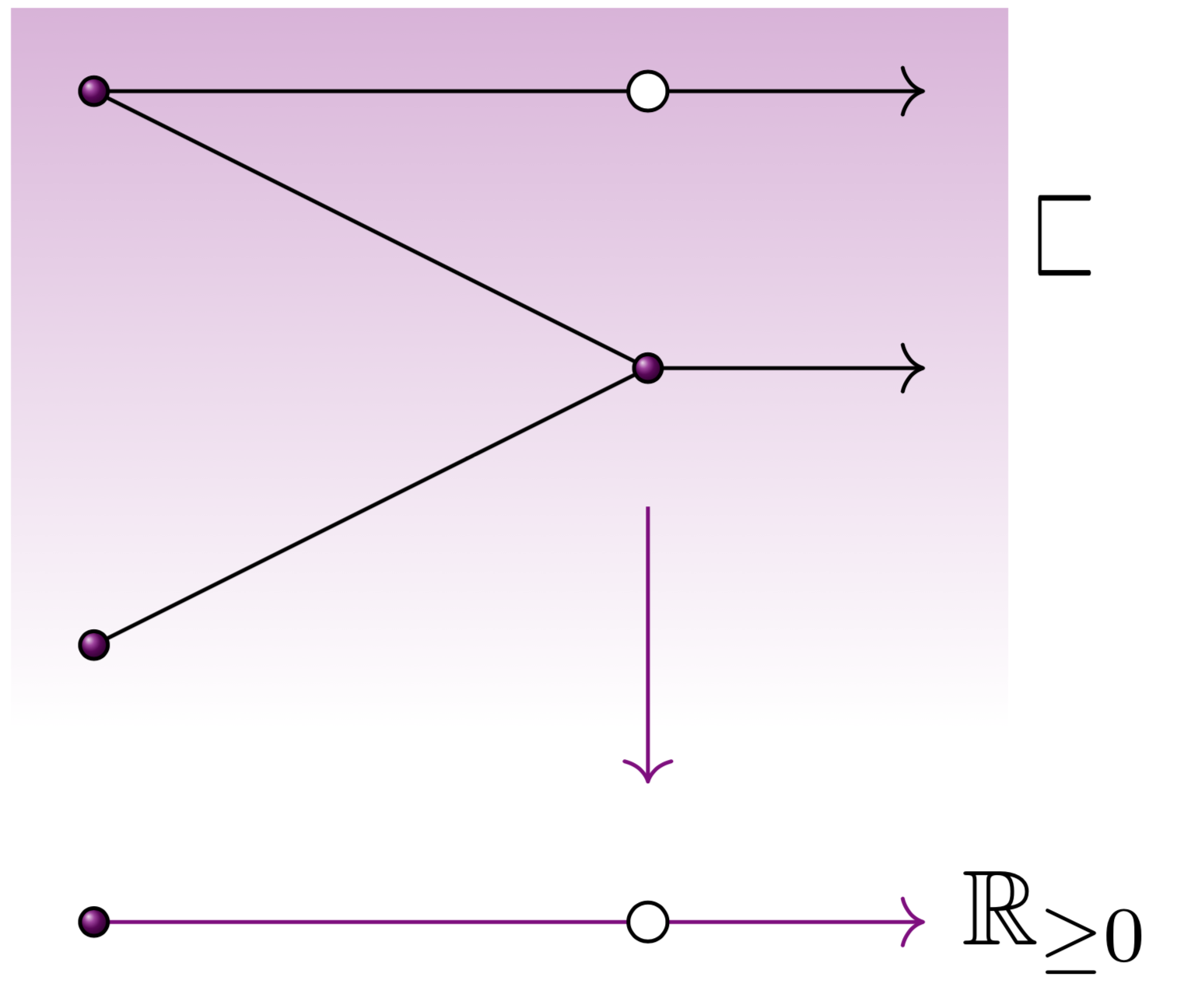}
\caption{Ignoring the white vertices, the map in the figure above is a tropical map arising from points in the tropicalization $\mathsf{ACGS}$ space. Subdivision of the target is the expansion arising in Jun Li's space. The subdivision of the source is made by marking the preimages of the newly introduced white vertex.}\label{fig: target-modification}
\end{figure}

The curve and target families are no longer equidimensional over the base, because the newly introduced vertices in the curve (or the target) form a cone in the total space of the curve (or the target) over $\mathsf{ACGS}(\RR_{\geq 0})$. The image of this cone is not necessarily a cone in the base. The subdivision of the base produces the subdivision $\mathsf{Li}\to\mathsf{ACGS}$. This combinatorics is analyzed in some detail in~\cite[Proposition~16, Example~17]{CMR14b}. See also~\cite[Section~6]{GS13}. 

\begin{remark}
With hindsight, the paper of Chen on logarithmic stable maps may be viewed in different light, as reversing the construction above~\cite{Che10}. Assume the existence of the moduli space $\mathsf{Kim}$ of logarithmic maps to expansions of the universal target $[\mathbb A^1/\mathbb G_m]$, as in~\cite{Kim08}. Then we may blow down the universal expanded target family to the constant family $[\mathbb A^1/\mathbb G_m]$ and partially stabilize the source curve, over $\mathsf{Kim}$. Observe that the universal map continues to be logarithmic, with the pushforward logarithmic structure~\cite[\textit{Appendix~B}]{AMW12}. This gives a new logarithmic source and target family over $\mathsf{Kim}$. We observe that combinatorial types in $\mathsf{Kim}$ come with a partial ordering, where two otherwise identical combinatorial types are distinct, if the images of two vertices in $\RR_{\geq 0}$ have different orders. By forgetting this partial ordering we obtain a collapsing of some faces in $\mathsf{Kim}$, and consequently a blow down map $\mathsf{Kim}\to\mathsf{Chen}$, where the universal curve continues to be flat. This outlines why the space of logarithmic maps to the universal target, without expansions, is representable by a stack. Of course, it is unlikely that the spaces would have been discovered in this way to begin with.
\end{remark}

\subsection{Auxiliary combinatorial choices} The procedure in the proof of Theorem~\ref{thm: main-comb-thm} produces infinitely many diagrams satisfying the requirements of the theorem. Indeed, one can perform an arbitrary further refinement of the base moduli space and pull back the source and target families to obtain a new moduli space satisfying the same conditions. Given any two such solutions to this moduli problem, there is a common subdivision of all parts of the diagram. 

\begin{lemma}\label{common-subdivision}
Let $\Kbar^{1}_\Gamma(\Sigma)$ and $\Kbar^{2}_\Gamma(\Sigma)$ be two diagrams, equipped with universal families, arising in the result above. Then there exists a third diagram $(\Kbar^\nu_\Gamma(\Sigma),\bplC^\nu,\widetilde\Sigma^\nu)$ together with subdivisions
\[
\mathrm{sub_i}: \Kbar^\nu_\Gamma(\Sigma)\to \Kbar^{i}_\Gamma(\Sigma),
\]
that are natural subdivisions of the pullback 
\[
\bplC^\nu\to \mathrm{sub_i}^{-1}\bplC^{i}
\]
and 
\[
\widetilde\Sigma^\nu\to \mathrm{sub_i}^{-1}\widetilde \Sigma^{i}.
\]
\end{lemma}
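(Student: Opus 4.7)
The plan is to construct $\Kbar^\nu_\Gamma(\Sigma)$ as a common refinement, first at the level of the base, then of the curve and target families, and finally to restore the equidimensionality and reducedness conditions by a further application of the weak semistable reduction procedure used in the proof of Theorem~\ref{thm: main-comb-thm}.

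First, observe that both $\Kbar^1_\Gamma(\Sigma)$ and $\Kbar^2_\Gamma(\Sigma)$ come with morphisms to $\mathsf{ACGS}_\Gamma(\Sigma)$ that are subdivisions in the sense of~\cite[Chapter 2,\S2]{KKMSD}, composed with a lattice refinement imposed at the final step in the proof of Theorem~\ref{thm: main-comb-thm}. Common refinements exist in this enlarged category (the fiber product of two subdivisions of a cone complex along their structure maps is again a subdivision of that cone complex, and lattice refinements combine by taking intersections inside the ambient rational vector space). I would define $\Kbar^{12}$ to be this common refinement; by construction it comes with canonical subdivision maps $\Kbar^{12}\to \Kbar^{i}_\Gamma(\Sigma)$ for $i=1,2$, each of which factors a common map to $\mathsf{ACGS}_\Gamma(\Sigma)$.

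Next, I would pull back the universal curve and target families along each $\Kbar^{12}\to \Kbar^{i}_\Gamma(\Sigma)$. Each of $\bplC^{1}$ and $\bplC^{2}$ restricts over $\Kbar^{12}$ to a subdivision of the pullback of the universal tropical curve over $\mathsf{ACGS}_\Gamma(\Sigma)$; taking their common refinement in the same manner produces a cone complex $\bplC^{12}$ equipped with maps to the pullbacks of both $\bplC^{i}$. The same construction applied to $\widetilde\Sigma^{1}$ and $\widetilde\Sigma^{2}$, which are subdivisions of (a subcomplex of) $\mathsf{ACGS}_\Gamma(\Sigma)\times\Sigma$ pulled back to $\Kbar^{12}$, yields $\widetilde\Sigma^{12}$ together with maps to the pullbacks of each $\widetilde\Sigma^{i}$. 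By functoriality of common refinements the induced morphism $\bplC^{12}\to \widetilde\Sigma^{12}$ exists and is compatible with the maps to the two original diagrams.

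At this stage the diagram $(\Kbar^{12},\bplC^{12},\widetilde\Sigma^{12})$ automatically satisfies the transversality property (the image of every cone of $\bplC^{12}$ lies in a cone of $\widetilde\Sigma^{12}$), but the vertical morphisms to $\Kbar^{12}$ may no longer be equidimensional or reduced because refining the source and target cone complexes without refining the base can destroy these conditions. To fix this, I would invoke~\cite[Lemma~4.3 and Proposition~4.4]{AK00}, exactly as in the last paragraph of the proof of Theorem~\ref{thm: main-comb-thm}, to produce a further subdivision and lattice refinement $\Kbar^\nu_\Gamma(\Sigma)\to \Kbar^{12}$ with the property that the pulled-back families $\bplC^\nu$ and $\widetilde\Sigma^\nu$ are equidimensional and reduced over $\Kbar^\nu_\Gamma(\Sigma)$; transversality is preserved since it is a cone-to-cone condition unaffected by base subdivision. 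The compositions $\mathrm{sub}_i: \Kbar^\nu_\Gamma(\Sigma)\to \Kbar^{12}\to \Kbar^{i}_\Gamma(\Sigma)$ are the required subdivisions, and the natural maps from $\bplC^\nu$ and $\widetilde\Sigma^\nu$ to the pullbacks $\mathrm{sub}_i^{-1}\bplC^{i}$ and $\mathrm{sub}_i^{-1}\widetilde\Sigma^{i}$ come from the two common-refinement steps.

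The main obstacle is the bookkeeping in the last step: one must check that the semistable reduction procedure can be applied in the presence of an already-performed subdivision of the source and target without destroying the common-refinement property. The key point is that the AK procedure only subdivides the base and then pulls back, so the canonical projections from $\bplC^\nu$ and $\widetilde\Sigma^\nu$ to the previously constructed cone complexes are preserved by construction.
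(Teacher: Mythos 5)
Your proposal follows the same route as the paper's own (very terse) argument: form a common refinement of the bases, pull back and commonly refine the source and target families, then invoke the weak semistable reduction machinery of Abramovich--Karu to restore the properties from Theorem~\ref{thm: main-comb-thm}. One intermediate claim, however, is false as stated: you assert that the diagram $(\Kbar^{12},\bplC^{12},\widetilde\Sigma^{12})$ obtained by common refinement \emph{automatically} satisfies the property that $F$ sends cones of $\bplC^{12}$ to cones of $\widetilde\Sigma^{12}$. This is not guaranteed. If $\sigma=\tau_1\cap\tau_2$ is a cone of $\bplC^{12}$ with $\tau_i\in\bplC^i$, one only gets $F(\sigma)\subseteq F(\tau_1)\cap F(\tau_2)=\rho_1\cap\rho_2$, and $F(\sigma)$ can be a proper subcone of $\rho_1\cap\rho_2$ that is not a face. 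A concrete example: take $\theta=\RR_{\geq 0}^3$ with coordinates $(x,y,z)$ and $F(x,y,z)=(x,y)$; let $\bplC^1$ subdivide $\theta$ along $\{x=z\}$ and $\bplC^2$ along $\{y=z\}$, in both cases with trivial target $\widetilde\Sigma^i=\RR_{\geq 0}^2$. Then the common-refinement cone $\{x\ge z\ge y\}$ maps onto $\{a\ge b\ge 0\}$, which is not a face of $\RR_{\geq 0}^2=\widetilde\Sigma^{12}$.

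The fix is exactly what the paper does but is worth stating: after taking the common refinement, one must re-run the \emph{full} procedure of Theorem~\ref{thm: main-comb-thm} -- first the target-expansion and transversality steps (the piecewise-linear function argument of~\cite[Lemma~4.3]{AK00} applied to the map, then pulling back to subdivide the curve), and only afterward the base-fixing step to restore equidimensionality and reducedness. Your proposal invokes only the last step, so as written it would produce an equidimensional and reduced diagram whose map may still fail to send cones to cones. Because all of these operations produce subdivisions of the starting data, the composite maps to $\Kbar^i_\Gamma(\Sigma)$, $\bplC^i$, $\widetilde\Sigma^i$ still exist, so the conclusion of the lemma is unaffected; the gap lies in the side-claim that the diagram at the $\Kbar^{12}$ stage already satisfies transversality.

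Aside from this, your construction is correct and matches the paper. One remark: the paper does not insist on taking the \emph{common refinement} of the curve and target families at the intermediate stage -- it says only that ``further subdivisions can be made'' -- so your version is slightly more structured than necessary, but this does no harm.
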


\begin{proof}
The existence of $\mathrm{sub_i}: \Kbar^\nu_\Gamma(\Sigma)\to \Kbar^{i}_\Gamma(\Sigma)$ is straightforward. After pulling back the universal tropical curve and target from the two spaces, further subdivisions can be made. The equidimensionality procedure described in the proof of the previous theorem constructs the requisite diagram.
\end{proof}

\section{Virtual semistable reduction}\label{sec: v-ss-red}

Recall that $(X,D)$ is a toroidal pair without self intersection. In this section, we establish Theorem~\ref{thm: transverse-maps}. The moduli spaces $\Kbar^\lambda_\Gamma(\cdot)$ are first constructed when the target is the Artin fan of $X$, and then for $X$ itself in Section~\ref{sec: maps-from-subs}. The transversality is established in Lemma~\ref{thm: transversality}. The virtual class is discussed in Section~\ref{sec: vir-class}.

The procedure in this section has been used before, for instance in~\cite{AW,CMR14b,R15b}. When the moduli space of maps is logarithmically unobstructed, for instance when considering genus $0$ logarithmic maps to toric varieties, subdivisions of the tropical moduli spaces produce toroidal modifications of the geometric moduli spaces. If $(X,D)$ is replaced with its Artin fan $\mathcal A_X$, the moduli space of maps becomes a toroidal embedding in the smooth topology. This toroidal embedding maps to an associated cone stack. Subdivisions from the previous section are applied to this cone stack, and then pulled back to maps $X$ itself. We now formalize the idea.

\subsection{Artin fans} A toroidal embedding without self-intersection $(X,D)$ determines an associated cone complex~\cite{KKMSD}. At every point of $X$, there is a Zariski open set $U\subset X$ together with an \'etale map $U\to V_{\sigma(p)}$ where $V_{\sigma(p)}$ is an affine toric variety with cone $\sigma(p)$. The association $p\mapsto\sigma(p)$ is constant on the locally closed toroidal strata of $X$. By considering the specialization relations among generic points of strata, these can be glued to form a cone complex $\Sigma_X$. When the boundary of $X$ is simple normal crossings, this is the cone over the dual complex of the boundary divisor $D$.

The cone complex $\Sigma_X$ determines a $0$-dimensional algebraic stack possessing the same combinatorial properties as $X$, referred to as the {\bf Artin fan of $X$}. For a general construction, we refer the reader to~\cite{ACMW,AW,CCUW,U16} and their precusor~\cite{Ols03}. In our case, the following will suffice. 

The cone complex $\Sigma_X$ above is a colimit of a diagram $\mathscr D_X$ of cones~\cite[Remark~2.2.1]{ACP}. For each cone $\sigma$ in the diagram, there is a canonically associated toric stack $[V_\sigma/T]$, where $T$ is the dense torus of $V_\sigma$. The diagram $\mathscr D_X$ immediately yields a diagram of algebraic stacks, whose colimit exists as an algebraic stack called the \textbf{Artin fan}. There is a canonical morphism
\[
X\to \mathcal A_X.
\]
This morphism is strict: the logarithmic structure on $X$ is the pullback of the divisorial logarithmic structure on $\mathcal A_X$. The category of cone complexes is equivalent to the category of Artin fans without monodromy~\cite[Proposition~3.3]{U16}. 

\subsection{Toroidal modifications} Given a logarithmic scheme or stack, a {\bf logarithmic modification}, sometimes called a toroidal modification, is a proper, birational, logarithmically \'etale morphism $X'\to X$. If $X$ is a toroidal embedding, such modifications $X'\to X$ correspond in a functorial manner to polyhedral refinements $\Sigma'\to \Sigma$ of the corresponding cone complexes, and thus to logarithmic modifications of the associated Artin fans. See~\cite[Section~3.1]{ACMW}.

\subsection{Maps to the Artin fan} The primary purpose of working with Artin fans is that it is natural to map algebraic varieties to them. 

\begin{definition}
A \textbf{logarithmic prestable map to $\mathcal A_X$ over $S$} is a diagram of logarithmic stacks
\[
\begin{tikzcd}
\mathcal C\arrow{r}\arrow{d} & \mathcal A_X\\
S,&
\end{tikzcd}
\]
where $\mathcal C$ is a family of logarithmically smooth curves.
\end{definition}


Fix discrete data $\Gamma$ compirising the genus $g$, number of marked points $n$, and contact orders $\bm c$. The logarithmic tangent bundle of $\mathcal A_X$ is the zero sheaf, so logarithmic deformations of maps from curves are unobstructed over the moduli space of curves. This leads to the following result of Abramovich and Wise~\cite[Proposition~1.6.1]{AW}.

\begin{proposition}
There exists a logarithmic algebraic stack $\fM_\Gamma(\mathcal A_X)$, which is toroidal in the smooth topology, of dimension $3g-3+n$, and represents the fibered category over logarithmic schemes of logarithmic maps to $\mathcal A_X$. The forgetful morphism to $\fM_{g,n}$ is logarithmically \'etale and birational.
\end{proposition}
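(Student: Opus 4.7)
The plan follows a logarithmic deformation-theoretic shortcut made available by the triviality of the target's log tangent bundle. First, I would package the fibered category directly: to a log scheme $(S,M_S)$ associate the groupoid of pairs $(C/S, f\colon C\to\mathcal A_X)$ with $C/S$ a family of log smooth curves of type $(g,n)$ and $f$ a logarithmic morphism whose marked contact orders are prescribed by $\bm c$. Since the strict étale topology on log schemes is subcanonical, the standard descent arguments for log morphisms from~\cite{Ols03} show this is a stack in the strict étale topology. The forgetful morphism $\pi\colon \fM_\Gamma(\mathcal A_X)\to\fM_{g,n}$ to Olsson's stack of prestable log curves carries essentially all the geometric content: pulling back the universal log curve $\mathcal C\to\fM_{g,n}$, one realizes $\fM_\Gamma(\mathcal A_X)$ as the relative stack of log morphisms $\mathcal C \to \mathcal A_X$ with the given contact data.

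The crucial input is that $\mathcal A_X$ is a $0$-dimensional logarithmic algebraic stack, so its logarithmic tangent complex vanishes. By Olsson's logarithmic deformation theory, given a strict square-zero extension $S\hookrightarrow S'$ over $\fM_{g,n}$ and a section $f$ of $\pi$ over $S$, the obstruction to extending $f$ lies in $H^1(\mathcal C, f^{\ast} T^{\mathrm{log}}_{\mathcal A_X})$, and extensions, when they exist, form a torsor under $H^0(\mathcal C, f^{\ast} T^{\mathrm{log}}_{\mathcal A_X})$. Both groups vanish identically, so $\pi$ is formally logarithmically étale. From this, the dimension formula $\dim \fM_\Gamma(\mathcal A_X) = \dim \fM_{g,n} = 3g-3+n$ and the toroidality in the smooth topology transfer directly from $\fM_{g,n}$, since the latter is already a toroidal embedding in the smooth topology and log étale morphisms preserve this structure.

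Algebraicity then reduces to exhibiting $\pi$ with a smooth combinatorial atlas over a presentation of $\fM_{g,n}$. Using the Artin-fan description $\mathcal A_X = \varinjlim_{\sigma \in \Sigma_X} [V_\sigma/T_\sigma]$ and the boundedness of combinatorial types once $\Gamma$ is fixed, a log morphism from a log curve to $\mathcal A_X$ is determined locally on $\mathcal C$ by a finite discrete datum: a cone of $\Sigma_X$ attached to each stratum of $\mathcal C$, together with contact-compatible monoid morphisms on characteristic sheaves. Gluing these local presentations in the manner of~\cite{CCUW} (as already invoked in the construction of the tropical moduli stack $\mathsf{ACGS}_\Gamma(\Sigma)$) yields an explicit smooth cover of $\fM_\Gamma(\mathcal A_X)$ by a disjoint union of toric stacks fibered over the combinatorial-type strata of $\fM_{g,n}$. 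Combining formal log étaleness with smoothness of this atlas produces the log algebraic structure required.

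I expect the principal technical obstacle to lie not in the deformation theory, which collapses to nothing, but in the bookkeeping of how the local Artin-fan presentations glue across boundary walls of $\fM_{g,n}$, where both the source log structure and the relevant cone $\sigma \in \Sigma_X$ may change. One must verify that the finite poset of combinatorial types closed under specialization indeed assembles into a finite-type algebraic atlas whose toroidal structure descends compatibly with the smooth topology on $\fM_{g,n}$; this is the combinatorial heart of the construction and is where the finite type assumption on $\Gamma$ genuinely enters.
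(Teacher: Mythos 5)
Your proof follows the same route the paper takes: the proposition is imported from Abramovich--Wise~\cite[Proposition~1.6.1]{AW}, and the one-line justification the paper gives is exactly your ``crucial input'' --- the logarithmic tangent complex of $\mathcal A_X$ vanishes, hence the forgetful map to $\fM_{g,n}$ is log \'etale, and the dimension and toroidality transfer. Your elaboration of the deformation-theoretic and descent steps is accurate, and your concern about where the technical work really lives (the algebraicity of the atlas built from the Artin-fan description, not the deformation theory) matches the structure of the original argument in loc.~cit.

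One factual point should be corrected. You invoke ``the boundedness of combinatorial types once $\Gamma$ is fixed,'' and later speak of a ``finite type assumption on $\Gamma$.'' This is not what happens: fixing $\Gamma = (g,n,\bm c)$ does \emph{not} bound the combinatorial types of maps to $\mathcal A_X$, because there is no balancing or curve-class constraint at this stage. The paper says so explicitly just below the proposition: the stack $\fM_\Gamma(\mathcal A_X)$ is only \emph{locally} of finite type, and one passes to a finite-type open substack only later, after pulling back along $\mathsf{ACGS}_\Gamma(X) \to \fM_\Gamma(\mathcal A_X)$. This does not break your argument --- algebraicity only requires a smooth cover by a disjoint (possibly infinite) union of finite-type pieces, one per combinatorial type, and your atlas can be assembled this way --- but you should not phrase the atlas construction as relying on finiteness, nor attribute a finite-type hypothesis to $\Gamma$.
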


As constructed, the stack is only locally of finite type. Indeed, a logarithmic map $C\to \mathcal A_X$ over a logarithmic base $S$ is equivalent to the data of $C$, and a map $\plC\to \Sigma_X$. Without a balancing condition on the maps, the problem is unbounded. However, the space of \textit{stable} logarithmic maps $\mathsf{ACGS}_\Gamma(X)$ is of finite type. We simply replace $\fM_\Gamma(\mathcal A_X)$ with any open substack that is of finite type and receives a morphism from $\mathsf{ACGS}_\Gamma(X)$. 

\begin{remark}
If $\Sigma$ is a fan embedded in a vector space, the map $\plC\to \Sigma$ can be required to be balanced. That is, we demand that the sum of the directional derivatives at every point of $\plC$ is $0$. It has been shown that there are only finitely many combinatorial types, and this can be seen via a Newton polygon argument~\cite[\textit{Proposition~2.1}]{NS06}. In the case considered here, $\Sigma$ can be embedded in a vector space, but tropicalizations of maps $C\to X$ need not be balanced. However, a modified balancing condition does hold, and this is used to establish boundedness in~\cite[\textit{Theorem~3.8}]{GS13}. Boundedness holds in greater generality~\cite[\textit{Section~1.1}]{ACMW}. 
\end{remark}

Let $\mathsf{ACGS}_\Gamma(\mathcal A_X)$ be the resulting finite type stack of logarithmic maps to $\mathcal A_X$. By enumerating the combinatorial types that are relevant to $X$, which we know are finite by boundedness, we obtain the analogous tropical moduli stack $\mathsf{ACGS}_\Gamma(\Sigma)$, where $\Sigma$ is the cone complex of $X$. The moduli problem $\mathsf{ACGS}_\Gamma(\Sigma)$ on cones from the previous section can be turned in to a moduli problem on logarithmic schemes, as follows. Given a logarithmic scheme $(S,M_S)$ that is locally of finite type, we consider the moduli problem of tropical stable maps with discrete data $\Gamma$ over $S^{\trop}$. This extends the tropical moduli functor to logarithmic schemes, see~\cite[Part II]{CCUW}. 

By an insight of Gross and Siebert~\cite[Section~1]{GS13}, a family of logarithmic maps to $\mathcal A_X$ over $S$ gives rise to a family of tropical stable maps to $\Sigma$ over $S^{\trop}$. Since $\Kbar_\Gamma(\Sigma)$ is universal for this moduli problem, we obtain a morphism
\[
\trop: \mathsf{ACGS}_\Gamma(\mathcal A_X)\to \mathsf{ACGS}_\Gamma(\Sigma).
\]
The notion of a morphism from the logarithmic algebraic stack on the left to the cone stack on the right is explained in detail in~\cite{CCUW}. Briefly, the authors prove that cone stacks form an equivalent $2$-category to Artin fans, and the tropicalization map is constructed by using this categorical equivalence. In that context, the following proposition is essentially identical to~\cite[Theorem~4]{CCUW}.

\begin{proposition}
The morphism 
\[
\trop: \mathsf{ACGS}_\Gamma(\mathcal A_X)\to \mathsf{ACGS}_\Gamma(\Sigma).
\]
is surjective, smooth and strict.
\end{proposition}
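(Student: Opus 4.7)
The plan is to reduce the proposition to the main categorical equivalence of Cavalieri--Chan--Ulirsch--Wise, namely that cone stacks (without monodromy) are equivalent to Artin fans. Under this equivalence, the cone stack $\mathsf{ACGS}_\Gamma(\Sigma)$ corresponds to an Artin fan $\mathcal A_{\trop}$, and the claimed morphism $\trop$ is realized as a morphism of logarithmic algebraic stacks $\mathsf{ACGS}_\Gamma(\mathcal A_X)\to \mathcal A_{\trop}$. The three conclusions can then be extracted from the formal features of this morphism, following the template of~\cite[Theorem 4]{CCUW} where the analogous statement is proved for the tropicalization of $\fM_{g,n}^{\log}$.

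For \emph{strictness}, one argues pointwise. A geometric point of $\mathsf{ACGS}_\Gamma(\mathcal A_X)$ is a logarithmic map $C\to \mathcal A_X$ with some combinatorial type $\Theta$, and the characteristic monoid of the basic (minimal) logarithmic structure at that point is, by the Gross--Siebert minimality analysis, the dual of the cone $\sigma_\Theta$ of $\mathsf{ACGS}_\Gamma(\Sigma)$ that classifies $\Theta$. The tropicalization morphism is defined so that it lands in the Artin cone $[\Spec\mathbb C[S_{\sigma_\Theta}]/\Spec\mathbb C[S_{\sigma_\Theta}^{\mathrm{gp}}]]$ with the pullback characteristic monoid equal to $S_{\sigma_\Theta}$. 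Hence the comparison map on characteristic sheaves is an isomorphism, which is strictness.

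For \emph{smoothness}, the Abramovich--Wise proposition quoted above states that the forgetful morphism $\fM_\Gamma(\mathcal A_X)\to \fM_{g,n}$ is logarithmically \'etale, reflecting that $\mathcal A_X$ has vanishing logarithmic tangent bundle. The analogous forgetful morphism $\mathcal A_{\trop}\to \fM_{g,n}$ is also log \'etale, and the tropicalization fits into a commutative triangle over $\fM_{g,n}$. Combining these, $\trop$ is log \'etale; since it is also strict, it is just \'etale on the underlying stacks in a neighbourhood of each point, hence smooth. (Alternatively, the fibres of $\trop$ over a point classifying combinatorial type $\Theta$ are the moduli stacks of logarithmically smooth curves with fixed dual graph, which are smooth.)

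For \emph{surjectivity}, one produces, for any cone $\sigma$ of $\mathsf{ACGS}_\Gamma(\Sigma)$ carrying a universal tropical stable map $\bplC\to \Sigma$, an actual logarithmic stable map realizing it. Take the Artin cone $\mathsf V_\sigma = [\Spec \mathbb C[S_\sigma]/\Spec \mathbb C[S_\sigma^{\mathrm{gp}}]]$ with its canonical log structure; by Olsson's representability theory there exists a logarithmic prestable curve $\mathcal C\to \mathsf V_\sigma$ with dual graph equal to the graph of $\bplC$ and node smoothing parameters matching the edge lengths in $S_\sigma$. The map of cone complexes $\bplC\to \Sigma$ corresponds, via the CCUW equivalence applied fibrewise, to a logarithmic morphism $\mathcal C\to \mathcal A_X$ over $\mathsf V_\sigma$ whose tropicalization is the given tropical map. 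The hardest step is this surjectivity construction, since it is the only point where genuine geometric content enters; the remaining two statements follow from largely formal properties of the cone-stack/Artin-fan dictionary together with the log \'etaleness of the target.
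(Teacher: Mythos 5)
Your arguments for \emph{strictness} and \emph{surjectivity} are along the same lines as the paper's, only with more detail spelled out: strictness is the Gross--Siebert minimality observation that basic monoids are dual to the tropical moduli cones, and surjectivity is ``by construction'' in the paper, which you unwind (correctly) as lifting a universal tropical map over a cone $\sigma$ to a logarithmic map $\mathcal C\to\mathcal A_X$ over the Artin cone $\mathsf V_\sigma$, using the fact that a map to $\mathcal A_X$ is equivalent to the combinatorial data.

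Your \emph{smoothness} argument, however, has a genuine error in its main branch. You claim that $\trop$ is logarithmically \'etale and therefore, by strictness, \'etale near every point. This cannot be right: the source $\fM_\Gamma(\mathcal A_X)$ is log smooth of dimension $3g-3+n$, while the target is a cone stack, i.e.\ a zero-dimensional Artin fan. Over the vertex of $\mathsf{ACGS}_\Gamma(\Sigma)$ the fiber of $\trop$ contains the entire open locus of smooth curves mapping to the dense point, a $(3g-3+n)$-dimensional family; a strict log \'etale morphism would have zero-dimensional fibers. The mix-up is upstream: there is no forgetful morphism $\mathcal A_{\trop}\to\fM_{g,n}$ (the dimensions make that impossible); what exists is $\fM_{g,n}\to\mathcal A_{\fM_{g,n}}$, which is smooth and strict but not log \'etale, and $\mathcal A_{\trop}\to\mathcal A_{\fM_{g,n}}$, which is log \'etale. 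Running the triangle correctly shows only that $\trop$ is log smooth, not log \'etale, and then strictness upgrades log smooth to smooth on the underlying stacks --- which is the intended conclusion. The paper's argument is the shorter version of this: $\mathsf{ACGS}_\Gamma(\mathcal A_X)$ is log smooth over $\spec\CC$, the Artin fan $\mathsf{ACGS}_\Gamma(\Sigma)$ is log \'etale over $\spec\CC$, so the strict morphism between them is log smooth, hence smooth. Your parenthetical alternative --- identifying the fibers of $\trop$ with moduli stacks of curves of a fixed dual graph --- is sound and rescues the claim, but you should not present the main argument as stated, since its conclusion ($\trop$ \'etale) is false and would, if true, contradict the positive fiber dimension you rely on elsewhere.
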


\begin{proof}
The morphism is surjective by construction. By appealing to~\cite[Section~1]{GS13}, the minimal monoids of $\mathsf{ACGS}_\Gamma(\mathcal A_X)$ are identified with the monoids dual to the tropical moduli space, so strictness is a tautology. Smoothness follows from the logarithmic smoothness of $\mathsf{ACGS}_\Gamma(\mathcal A_X)$.
\end{proof}

\subsection{Spaces of maps from the subdivisions}\label{sec: maps-from-subs} With the tropicalization map handy, we pull back the subdivisions constructed in Section~\ref{sec: transverse-maps}. These spaces will satisfy the transversality hypothesis in Theorem~\ref{thm: transverse-maps}.

Let $X$ be a toroidal embedding without self intersections and let $\Sigma$ be its cone complex. Let $S$ be a logarithmic scheme with tropicalization $\Delta$. 

\begin{definition}
Given a tropical expansion $\widetilde \Sigma$  of $\Sigma$ over $\Delta$ in the sense of Definition~\ref{def: tropical-expansion}, the associated \textbf{logarithmic expansion} is obtained as the logarithmic modification
\[
\widetilde{\mathcal X} := (X \times S)\times_{\Sigma\times \Delta} \widetilde \Sigma \to S.
\]
\end{definition}

The reader might be perturbed that the definition above has a logarithmic scheme on one side and a cone complex on the other -- this is consistent with the formalism introduced in~\cite{CCUW}. In practice, we pass to the associated Artin fan on the cone complex side and take the fiber product in fine and saturated logarithmic schemes. 

As discussed previously, the family $\widetilde{\mathcal X}\to S$ has reduced and equidimensional fibers by construction. In fact, it is also flat~\cite[Theorem~2.1.5]{Mol16}. We construct moduli spaces of curves mapping to expansions of $X$.

Fix a tropical moduli diagram 
\[
\begin{tikzcd}
\bplC^\lambda \arrow{rr}{F^\lambda} \arrow{dr} & & \widetilde\Sigma^\lambda \arrow{dl}\\
& \Kbar^\lambda_\Gamma(\Sigma). &
\end{tikzcd}
\]
as constructed in Theorem~\ref{thm: main-comb-thm}. The subdivision $\Kbar^\lambda_\Gamma(\Sigma)$ is a subcategory of the fibered category $\mathsf{ACGS}_\Gamma(\Sigma)$. By pulling back $\Kbar_\Gamma(\mathcal A_X)$ via the tropicalization map we obtain a subcategory $\Kbar^\lambda_\Gamma(\mathcal A_X)$. We similarly obtain a universal curve family $\mathcal C^\lambda$ and a target family $\mathcal A_X^\lambda$. 

\begin{lemma}[{\it Representability}]\label{prop: representability}
The categories $\Kbar^\lambda_\Gamma(\mathcal A_X)$, $\mathcal C^\lambda$ and $\mathcal A_X^\lambda$ are representable by algebraic stacks with logarithmic structure. 
\end{lemma}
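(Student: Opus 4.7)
The plan is to realize each of the three categories as a logarithmic modification of an already-algebraic object, and then invoke the classical representability of logarithmic modifications by Kato and Abramovich--Wise. All three cases have the same structural content: a subdivision on the tropical side pulls back, through the tropicalization morphism, to a subdivision on the algebraic side, and subdivisions become representable modifications under the equivalence of 2-categories between cone stacks and Artin fans developed in \cite{CCUW}.

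First, for $\Kbar^\lambda_\Gamma(\mathcal A_X)$: the morphism $\Kbar^\lambda_\Gamma(\Sigma)\to \mathsf{ACGS}_\Gamma(\Sigma)$ constructed in Theorem~\ref{thm: main-comb-thm} is a subdivision of cone stacks, and by the equivalence of loc.\ cit.\ this corresponds to a logarithmic modification of the associated Artin fans. The tropicalization morphism from the previous proposition is strict and smooth, and it factors through the Artin fan of $\mathsf{ACGS}_\Gamma(\Sigma)$. I would therefore define $\Kbar^\lambda_\Gamma(\mathcal A_X)$ as the strict fine-and-saturated fiber product of $\mathsf{ACGS}_\Gamma(\mathcal A_X)$ with this logarithmic modification. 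Since logarithmic modifications of fine-and-saturated logarithmic algebraic stacks are themselves representable by logarithmic algebraic stacks \cite{AW,Kato94}, and this property is stable under base change, the fiber product is representable with logarithmic structure. This verifies the first case.

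Next, for the universal curve $\mathcal C^\lambda$ and the universal target $\mathcal A_X^\lambda$: the diagram produced by Theorem~\ref{thm: main-comb-thm} also gives subdivisions $\bplC^\lambda\to \bplC$ and $\widetilde\Sigma^\lambda\to \Sigma\times \Kbar^\lambda_\Gamma(\Sigma)$ over $\Kbar^\lambda_\Gamma(\Sigma)$. On the algebraic side, the universal curve over $\mathsf{ACGS}_\Gamma(\mathcal A_X)$ is already a representable logarithmic algebraic stack (it is the universal curve of the moduli of logarithmic maps to $\mathcal A_X$), and similarly the universal target $\mathcal A_X\times \Kbar^\lambda_\Gamma(\mathcal A_X)$ is representable. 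Each tropical subdivision corresponds, again via the cone stack--Artin fan equivalence, to a logarithmic modification of the relevant universal family, pulled back along the appropriate tropicalization of curve and target. Pulling these back along the morphisms $\mathcal C\to \bplC$ and $\mathcal A_X\times \Kbar^\lambda_\Gamma(\mathcal A_X)\to \Sigma\times \Kbar^\lambda_\Gamma(\Sigma)$, I obtain $\mathcal C^\lambda$ and $\mathcal A_X^\lambda$ as logarithmic modifications of representable algebraic objects, hence themselves representable.

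The main point requiring care is not algebraicity per se, which is standard once the problem is framed as a logarithmic modification, but the correct identification of the three pullbacks with honest log modifications in the sense of \cite{Kato94,AW}. This relies on the combinatorial properties built into Theorem~\ref{thm: main-comb-thm}: equidimensionality and reducedness ensure that the subdivisions pass through the fine-and-saturated fiber product without pathology, and that the resulting map of Artin fans is a modification rather than merely a logarithmic blowup followed by a stratum removal. Once these combinatorial conditions are in place, the argument is formal and the statement follows directly from the representability theorem for logarithmic modifications.
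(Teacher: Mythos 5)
Your proof takes essentially the same approach as the paper's (which is a single sentence asserting that the categories are local pullbacks of toric modifications and root constructions), but spells out the mechanism in much more detail: subdivisions of cone stacks correspond to logarithmic modifications of Artin fans via the CCUW equivalence, and these pull back along the strict, smooth tropicalization morphism via fs fiber products. The argument is sound.

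One minor imprecision worth flagging: Theorem~\ref{thm: main-comb-thm} ends by refining the lattice on the base to ensure reducedness. That step corresponds geometrically to a root stack, not a subdivision, and the paper's proof explicitly acknowledges this (``and possibly root constructions''). Your proof phrases everything as ``logarithmic modification'' in the subdivision/Kato sense without noting that lattice refinements also occur; your citation of the cone stack--Artin fan equivalence does cover both cases, so nothing breaks, but the phrasing obscures the root-stack contribution. A second small point: your closing paragraph attributes to equidimensionality and reducedness the role of ensuring ``the resulting map of Artin fans is a modification rather than a blowup followed by stratum removal.'' That is not really what those conditions do; they are there to guarantee \emph{flatness} of the universal curve and target families (addressed in the following proposition in the paper), and are not needed for representability per se. Representability of log modifications requires no such hypotheses.
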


\begin{proof}
Each of the categories is obtained locally by pulling back toric modifications, and possibly root constructions, of toric varieties, so representability is immediate. Note that the root constructions have the effect of adding finite isotropy groups to the strata, and correspond to the passage to a finite index sublattice at the level of cone stacks. The root constructions are required to ensure the the curve $\mathcal C^\lambda$ is reduced over the moduli space. See for instance~\cite[Section~3]{Mol16} for a discussion.
\end{proof}

\begin{proposition}
The morphism $\mathcal C^\lambda\to \mathsf{K}^\lambda_\Gamma(\mathcal A_X)$ is a flat family of logarithmically smooth curves. The Artin fan target family $\mathcal A_X^\lambda\to \mathsf{K}^\lambda_\Gamma(\mathcal A_X)$ is also flat.
\end{proposition}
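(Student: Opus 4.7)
The plan is to reduce both flatness claims to the combinatorial weak semistable reduction output of Theorem \ref{thm: main-comb-thm} and then invoke the flatness criterion \cite[Theorem~2.1.5]{Mol16} for weakly semistable toroidal morphisms. Since the tropicalization morphism $\trop\colon \mathsf{ACGS}_\Gamma(\mathcal A_X)\to \mathsf{ACGS}_\Gamma(\Sigma)$ is surjective, smooth, and strict, and since the families $\mathcal C^\lambda\to \Kbar^\lambda_\Gamma(\mathcal A_X)$ and $\mathcal A_X^\lambda\to \Kbar^\lambda_\Gamma(\mathcal A_X)$ arise by strict base change along $\trop$ from $\bplC^\lambda\to \Kbar^\lambda_\Gamma(\Sigma)$ and $\widetilde\Sigma^\lambda\to \Kbar^\lambda_\Gamma(\Sigma)$ respectively, both flatness claims reduce to a toroidal assertion. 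Lemma \ref{prop: representability} lets me work smooth-locally on $\Kbar^\lambda_\Gamma(\mathcal A_X)$ with morphisms of logarithmic algebraic stacks whose combinatorial side is exactly the cone stack morphism produced in Theorem \ref{thm: main-comb-thm}.

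Next I would invoke the equidimensionality and reducedness conditions established there: for each of the morphisms $\bplC^\lambda\to \Kbar^\lambda_\Gamma(\Sigma)$ and $\widetilde\Sigma^\lambda\to \Kbar^\lambda_\Gamma(\Sigma)$, every source cone surjects onto a target cone, and the induced map on lattices is surjective onto the lattice of the image cone. These are precisely the hypotheses of weak semistable reduction, so the flatness criterion applies to each morphism and yields flatness of both $\mathcal A_X^\lambda\to \Kbar^\lambda_\Gamma(\mathcal A_X)$ and $\mathcal C^\lambda\to \Kbar^\lambda_\Gamma(\mathcal A_X)$. Smooth-locally, one is checking flatness for a toric morphism whose fan map is equidimensional and surjective on lattices, which is classical.

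To promote the second flatness statement to a family of \emph{logarithmically smooth curves}, I would verify that the fibers remain nodal. The only new vertices introduced when passing from $\bplC$ to $\bplC^\lambda$ in the proof of Theorem \ref{thm: main-comb-thm} are bivalent, corresponding either to rational bubbles inserted at existing nodes or to subdivisions of unbounded ends at marked points. The local model of the total space at each new node remains the standard node $xy = t$ with $t$ pulled back from the base, and marked points retain their divisorial logarithmic structure. Log smoothness is therefore preserved fiberwise, and Corollary \ref{cor: log-auts-maps} ensures the resulting logarithmic prestable curve remains stable as a logarithmic curve.

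The main technical obstacle is ensuring that the base change by $\trop$ really is strict, so that the toroidal flatness statement descends cleanly to the algebraic moduli stack; this rests on the identification of minimal monoids of $\mathsf{ACGS}_\Gamma(\mathcal A_X)$ with the duals of the tropical moduli cones (as recorded in the preceding proposition), together with the compatibility of the Abramovich--Karu construction in \cite{AK00} with strict base change. Once this strictness is in hand, the two flatness assertions are a formal consequence of weak semistable reduction on the cone side.
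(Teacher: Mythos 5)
Your proposal is correct and takes essentially the same route as the paper: both reduce flatness to the equidimensionality and reducedness conditions secured in Theorem~\ref{thm: main-comb-thm}, descended to the algebraic moduli space via the strict tropicalization morphism, and then invoke the toroidal flatness criterion (the paper cites the polyhedral criteria of \cite[Sections~4 \& 5]{AK00}; you cite \cite[Theorem~2.1.5]{Mol16}, which packages the same statement). You add a useful explicit check that the fibers of $\mathcal C^\lambda$ remain nodal — hence genuinely a family of logarithmically smooth curves — which the paper leaves implicit.
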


\begin{proof}
Given the modification $\Kbar^\lambda_\Gamma(\mathcal A_X)\to \mathsf{ACGS}(\mathcal A_X)$, we may pull back the universal curve to obtain 
\[
\mathcal C\to \Kbar^\lambda_\Gamma(\mathcal A_X).
\]
This family is certainly flat since it is obtained by pulling back a flat family. The new universal curve is a modification
\[
\mathcal C^\lambda\to\mathcal C,
\]
and we wish to show that the composition $\mathcal C^\lambda\to \Kbar^\lambda_\Gamma(\mathcal A_X)$ remains flat. This can be checked using the polyhedral criteria~\cite[Sections~4 {\it \&} 5]{AK00}, which are satisfied by construction. The same holds for the target family.
\end{proof}

For properties relating to characteristic sheaves, $X$ and $\mathcal A_X$ are indistinguishable~\cite[Lemma~4.1]{AW}.

\begin{lemma}
The morphism
\[
\mathsf{ACGS}_\Gamma(X)\to \mathsf{ACGS}_\Gamma(\mathcal A_X)
\]
is strict.
\end{lemma}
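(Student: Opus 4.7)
The plan is to check strictness by comparing characteristic monoids at geometric points of $\mathsf{ACGS}_\Gamma(X)$, using the fact that these monoids are determined purely by tropical data, which is identical for targets $X$ and $\mathcal A_X$.

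First I would unwind what needs to be shown: a morphism of fine saturated logarithmic stacks is strict precisely when the induced map on characteristic sheaves is an isomorphism. Because the canonical morphism $X \to \mathcal A_X$ is strict by construction of the Artin fan, post-composition with this map sends a logarithmic stable map $f : C \to X$ to a logarithmic map $\bar f : C \to \mathcal A_X$ with the same characteristic data on the source. In particular, at every geometric point $s$ of $\mathsf{ACGS}_\Gamma(X)$ the tropicalization of $f_s$ coincides with the tropicalization of $\bar f_s$: the same dual graph, the same contact orders, the same slopes on edges, and the same images of vertices and edges in $\Sigma = \Sigma_{\mathcal A_X}$.

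Next I would invoke the characterization of minimal logarithmic structures used throughout the paper. By the Gross--Siebert construction recalled in the proof of the preceding proposition, the minimal characteristic monoid $\overline M_s$ on the base of either moduli problem is canonically dual to the combinatorial cone parameterizing the tropical type of the map at $s$. Since this tropical type depends only on the map $\plC \to \Sigma$, and since both $\mathsf{ACGS}_\Gamma(X)$ and $\mathsf{ACGS}_\Gamma(\mathcal A_X)$ have their minimal logarithmic structures determined by the same tropical type, the induced map on minimal characteristic monoids at $s$ is the identity.

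Finally I would pass from the minimal case to arbitrary fine saturated bases. Any logarithmic stable map over $S$ is a pullback of the minimal one along a canonical map from $S$ to the corresponding minimal base, and strictness is preserved under such pullbacks. Thus the induced map on characteristic sheaves
\[
\overline{M}_{\mathsf{ACGS}_\Gamma(\mathcal A_X)} \to \overline{M}_{\mathsf{ACGS}_\Gamma(X)}
\]
is an isomorphism, which is the definition of strictness. The only subtle point — and in that sense the main obstacle — is verifying that the tropical data attached to a logarithmic map to $X$ genuinely coincides with that attached to its composition with $X \to \mathcal A_X$; this is precisely the content of \cite[Lemma~4.1]{AW} cited just before the statement, which asserts that $X$ and $\mathcal A_X$ are indistinguishable from the point of view of characteristic sheaves.
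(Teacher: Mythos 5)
Your proposal is correct and follows exactly the route the paper has in mind: the paper gives no proof of its own beyond the one-line remark preceding the lemma (``For properties relating to characteristic sheaves, $X$ and $\mathcal A_X$ are indistinguishable'') together with the citation to \cite[Lemma~4.1]{AW}, and your argument is precisely the expected unwinding of that remark via the Gross--Siebert description of minimal monoids in terms of tropical types. You correctly identify the key points (strictness of $X\to\mathcal A_X$, the tropical type depending only on the map to $\mathcal A_X$, and the minimal base monoid being dual to the cone of tropical maps of the given type), so there is no gap.
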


We obtain the analogous variants of actual stable maps to expansions of $X$, denoted $\mathsf{K}^\lambda_\Gamma(X)$.

\begin{corollary}
The curve and target families $\mathcal C^\lambda\to \mathsf{K}^\lambda_\Gamma(X)$ and $\mathcal X^\lambda\to \mathsf{K}^\lambda_\Gamma(X)$ are flat. 
\end{corollary}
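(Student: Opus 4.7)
The plan is to deduce the corollary from the preceding proposition by a strict base change. The scheme-theoretic target $X$ sits over its Artin fan via the canonical strict morphism $X\to\mathcal A_X$, and by the lemma immediately above the corollary the induced morphism
\[
\mathsf{ACGS}_\Gamma(X)\to \mathsf{ACGS}_\Gamma(\mathcal A_X)
\]
is also strict. Since the subdivision $\mathsf{K}^\lambda_\Gamma(\mathcal A_X)\to \mathsf{ACGS}_\Gamma(\mathcal A_X)$ is pulled back, via the tropicalization morphism, from a subdivision $\Kbar^\lambda_\Gamma(\Sigma)\to \mathsf{ACGS}_\Gamma(\Sigma)$ of cone stacks, it is a logarithmic modification determined entirely by characteristic data. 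Strict morphisms preserve characteristic sheaves, so I expect that
\[
\begin{tikzcd}
\mathsf{K}^\lambda_\Gamma(X) \ar[r] \ar[d] & \mathsf{K}^\lambda_\Gamma(\mathcal A_X) \ar[d] \\
\mathsf{ACGS}_\Gamma(X) \ar[r] & \mathsf{ACGS}_\Gamma(\mathcal A_X)
\end{tikzcd}
\]
is Cartesian in fine and saturated logarithmic stacks, and by construction the definition of $\mathsf{K}^\lambda_\Gamma(X)$ is precisely the pullback along the bottom arrow.

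Next I would identify the universal families. The curve family $\mathcal C^\lambda\to \mathsf{K}^\lambda_\Gamma(X)$ is the pullback of the universal logarithmically smooth curve $\mathcal C^\lambda\to \mathsf{K}^\lambda_\Gamma(\mathcal A_X)$ along the strict morphism $\mathsf{K}^\lambda_\Gamma(X)\to \mathsf{K}^\lambda_\Gamma(\mathcal A_X)$, since curves depend only on the base and its logarithmic structure. For the target family, the definition in the previous subsection gives
\[
\mathcal X^\lambda \;=\; (X\times\mathsf{K}^\lambda_\Gamma(X))\times_{\mathcal A_X\times \mathsf{K}^\lambda_\Gamma(X)} \mathcal A_X^\lambda,
\]
which exhibits $\mathcal X^\lambda$ as the strict base change of $\mathcal A_X^\lambda\to \mathsf{K}^\lambda_\Gamma(X)$ (obtained itself from $\mathcal A_X^\lambda\to \mathsf{K}^\lambda_\Gamma(\mathcal A_X)$ by Cartesian pullback) along $X\to \mathcal A_X$.

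With both families expressed as fibered products of the corresponding Artin-fan families, the result follows because flatness is preserved under arbitrary base change: the preceding proposition gives flatness of $\mathcal C^\lambda\to \mathsf{K}^\lambda_\Gamma(\mathcal A_X)$ and $\mathcal A_X^\lambda\to \mathsf{K}^\lambda_\Gamma(\mathcal A_X)$, and these base change to the asserted morphisms. The only subtle point, and the one I would be most careful about, is checking that fine and saturated base change agrees with ordinary base change on underlying stacks in this context; this follows because all relevant morphisms of characteristic monoids are maps of saturated monoids whose cokernels are torsion free after the reducedness step in Theorem~\ref{thm: main-comb-thm}, so no saturation correction is introduced and underlying schemes commute with the logarithmic fibered product.
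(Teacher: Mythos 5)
Your overall structure matches the paper's: reduce to the Artin fan case via the strict square, observe the curve family is a base change, and handle the target family via the defining fiber product. But there is a genuine logical gap in the final step. You claim that ``$\mathcal A_X^\lambda\to \mathsf{K}^\lambda_\Gamma(\mathcal A_X)$ base changes to the asserted morphism'' $\mathcal X^\lambda\to\mathsf{K}^\lambda_\Gamma(X)$. This is not a Cartesian base change: the pullback of $\mathcal A_X^\lambda$ along $\mathsf{K}^\lambda_\Gamma(X)\to\mathsf{K}^\lambda_\Gamma(\mathcal A_X)$ is the expanded Artin fan family $\mathcal A_X^\lambda$ over $\mathsf{K}^\lambda_\Gamma(X)$, not $\mathcal X^\lambda$; to get $\mathcal X^\lambda$ you then additionally pull back the morphism $\mathcal A_X^\lambda\to\mathcal A_X\times\mathsf{K}^\lambda_\Gamma(X)$ along $X\to\mathcal A_X$. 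That second step does \emph{not} fall under ``flatness is preserved under base change'' with respect to the fixed base $\mathsf{K}^\lambda_\Gamma(X)$; you need the separate observation that $X\to\mathcal A_X$ is strict and logarithmically smooth (so its pullback $\mathcal X^\lambda\to\mathcal A_X^\lambda$ is strict logarithmically smooth, hence smooth, hence flat) and then conclude by composing two flat morphisms. This is precisely what the paper's proof supplies and your argument omits.

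A secondary point: your justification for why fine--saturated base change agrees with ordinary base change is misdiagnosed. You appeal to torsion-freeness of cokernels secured by the reducedness condition in Theorem~\ref{thm: main-comb-thm}, but the relevant and much simpler reason is that in each Cartesian square at issue one leg is strict (either $X\to\mathcal A_X$ or $\mathsf{K}^\lambda_\Gamma(X)\to\mathsf{K}^\lambda_\Gamma(\mathcal A_X)$), and fibered products against a strict morphism always compute the ordinary fiber product on underlying stacks. Invoking the reducedness step introduces an irrelevant hypothesis and obscures what is really being used.
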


\begin{proof}
The curve is pulled back from the space of maps to the Artin fan, so its flatness is clear. Similarly, over $\Kbar^\lambda_\Gamma(\mathcal A_X)$, there is a modification
\[
\mathcal A_X^\lambda\to \mathcal A_X\times \Kbar^\lambda_\Gamma(\mathcal A_X).
\]
The family $\mathcal A_X^\lambda$ is flat over $\Kbar^\lambda_\Gamma(\mathcal A_X)$ by construction. We also have the constant target family $X\times \Kbar^\lambda_\Gamma(\mathcal A_X)\to \Kbar^\lambda_\Gamma(\mathcal A_X)$. The modification yields $\mathcal X^\lambda\to \Kbar^\lambda_\Gamma(\mathcal A_X)$, which is the composition of a strict logarithmically smooth morphism $\mathcal X^\lambda\to \mathcal A_X^\lambda$ and a flat morphism, and therefore remains so. The target family on $\Kbar^\lambda_\Gamma(X)$ is pulled back from the space of maps to the Artin fan, so it is flat. 
\end{proof}

\begin{lemma}[{\it Properness}]
The fibered category $\Kbar^\lambda_\Gamma(X)$ is representable by a proper Deligne--Mumford stack with logarithmic structure.
\end{lemma}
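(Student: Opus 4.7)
The plan is to realize $\Kbar^\lambda_\Gamma(X)$ as a logarithmic modification of the Abramovich--Chen--Gross--Siebert moduli space $\mathsf{ACGS}_\Gamma(X)$ and to transfer the desired properties across this modification. The ACGS moduli space is already a proper Deligne--Mumford stack with logarithmic structure, so once the modification is in place, properness and the logarithmic structure come for free; only the Deligne--Mumford property requires an argument specific to the expanded setting.

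First I would construct $\Kbar^\lambda_\Gamma(X)$ as the fiber product
\[
\Kbar^\lambda_\Gamma(X) := \mathsf{ACGS}_\Gamma(X) \times_{\mathsf{ACGS}_\Gamma(\mathcal A_X)} \Kbar^\lambda_\Gamma(\mathcal A_X),
\]
using the strictness of $\mathsf{ACGS}_\Gamma(X) \to \mathsf{ACGS}_\Gamma(\mathcal A_X)$ to ensure that the fs and underlying fiber products agree and that logarithmic structures pull back cleanly. By Lemma~\ref{prop: representability}, the morphism $\Kbar^\lambda_\Gamma(\mathcal A_X) \to \mathsf{ACGS}_\Gamma(\mathcal A_X)$ is a logarithmic modification of algebraic stacks, obtained by pulling back a subdivision of cone stacks along the strict, smooth, surjective tropicalization morphism. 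Base change along a strict morphism preserves this property, so $\Kbar^\lambda_\Gamma(X) \to \mathsf{ACGS}_\Gamma(X)$ is likewise a logarithmic modification of algebraic stacks, representable, and equipped with the induced logarithmic structure. Since logarithmic modifications are proper by definition and $\mathsf{ACGS}_\Gamma(X)$ is proper over $\Spec \CC$, the composition is proper.

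For the Deligne--Mumford property, the potential concern is that source and target subdivisions introduce semistable rational components whose scaling $\Gm$-actions could in principle enlarge the logarithmic automorphism group. This is precisely the phenomenon resolved by the preceding analysis: Corollary~\ref{cor: log-auts-maps} guarantees that a logarithmic modification of a stable logarithmic map remains logarithmically stable. Applied to a geometric point of $\Kbar^\lambda_\Gamma(X)$, any logarithmic automorphism of the modified data induces an automorphism of the underlying ACGS stable map obtained by contracting the target expansion and stabilizing the source. The induced map on automorphism groups is injective because a logarithmic modification is an isomorphism over a dense open, so any automorphism restricting to the identity on the unmodified locus is the identity. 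Since $\mathsf{ACGS}_\Gamma(X)$ is Deligne--Mumford, the automorphism groups on $\Kbar^\lambda_\Gamma(X)$ embed into finite groups, and the representable modification of a Deligne--Mumford stack is Deligne--Mumford.

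The main obstacle to anticipate is conceptual rather than technical: the Deligne--Mumford property depends entirely on the stability-preservation statement of Corollary~\ref{cor: log-auts-maps}, which itself rests on Santos-Parker's characterization of finite logarithmic automorphism groups for non-minimal curves. Once that structural input is available, the proof reduces to the formal observation that a base change of a logarithmic modification along a strict morphism inherits representability, properness, and the Deligne--Mumford property from its target.
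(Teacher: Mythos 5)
Your proof follows essentially the same route as the paper: $\Kbar^\lambda_\Gamma(X)$ is realized as the pullback of the subdivision $\Kbar^\lambda_\Gamma(\mathcal A_X)\to\mathsf{ACGS}_\Gamma(\mathcal A_X)$ along the strict morphism $\mathsf{ACGS}_\Gamma(X)\to\mathsf{ACGS}_\Gamma(\mathcal A_X)$, properness is transferred by composing a proper modification with the proper space $\mathsf{ACGS}_\Gamma(X)$, and representability comes from Lemma~\ref{prop: representability}. The paper's own proof is this compressed, deferring the Deligne--Mumford question to the subsequent ``Logarithmic stability'' lemma, whereas you fold that discussion in directly; that is a reasonable organizational choice and invokes the right input, namely Corollary~\ref{cor: log-auts-maps}.

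One substantive caveat: the claim that the induced map on automorphism groups into $\mathrm{Aut}_{\mathsf{ACGS}}$ is \emph{injective} ``because a logarithmic modification is an isomorphism over a dense open'' is not correct as stated. The modification $\mathcal C^\lambda\to\mathcal C$ collapses rational bubbles, and a bubble carrying a $\Gm$-scaling that fixes its two nodes restricts to the identity on the unmodified locus while being a nontrivial automorphism of $C^\lambda$; the dense-open argument therefore does not rule it out. What actually controls these scaling actions is precisely the content of Santos-Parker's criterion via Corollary~\ref{cor: log-auts-maps}, which directly yields finiteness of the logarithmic automorphism group of the modified map, with no appeal to injectivity into the ACGS automorphism group. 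Since you already invoke that corollary, and since the relative representability of the modification (with at worst finite root-stack isotropy) combined with $\mathsf{ACGS}_\Gamma(X)$ being Deligne--Mumford already gives the Deligne--Mumford property, the injectivity step should simply be deleted; the rest of your argument stands.
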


\begin{proof}
The analogous stack $\Kbar^\lambda_\Gamma(\mathcal A_X)$ is algebraic by Lemma~\ref{prop: representability}. The morphism
\[
\Kbar^\lambda_\Gamma(\mathcal A_X)\to\mathsf{ACGS}_\Gamma(\mathcal A_X)
\]
is proper, since it is pulled back from a subdivision. Therefore, $\Kbar^\lambda_\Gamma(X)$ is a proper modification of the proper Deligne--Mumford stack $\mathsf{ACGS}_\Gamma(X)$, and therefore is representable and proper.
\end{proof}

The subdivision may add arbitrary numbers of bubbles to the curve that map to an expansion of the target. Logarithmic stability is unaffected. 

\begin{lemma}[{\it Logarithmic stability}]
Given a point of the logarithmic algebraic stack $\Kbar^\lambda_\Gamma(X)$, the corresponding map
\[
C^\lambda \to \mathcal X^\lambda\to X
\]
has finite logarithmic automorphism group.
\end{lemma}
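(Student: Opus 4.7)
The plan is to reduce the claim to the criterion established in Corollary~\ref{cor: log-auts-maps}, which asserts that a logarithmic modification of a stable logarithmic map over a geometric point is again stable. Fix a geometric point $\overline s \to \Kbar^\lambda_\Gamma(X)$ with corresponding map $C^\lambda \to \mathcal X^\lambda \to X$. Logarithmic automorphisms of the composite are detected by commutation with the map to $X$, so the intermediate expansion $\mathcal X^\lambda$ plays no essential role in the automorphism count.

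Since $\Kbar^\lambda_\Gamma(X)$ was defined by pulling back the logarithmic modification $\Kbar^\lambda_\Gamma(\Sigma) \to \mathsf{ACGS}_\Gamma(\Sigma)$ along the tropicalization map of $\mathsf{ACGS}_\Gamma(X)$, the point $\overline s$ lies over a unique geometric point of $\mathsf{ACGS}_\Gamma(X)$. That underlying point is by definition a stable logarithmic map $f: C \to X$, and in particular has finite logarithmic automorphism group. Next I would identify $C^\lambda$ at $\overline s$ as a logarithmic modification of the base-change of $C$ along the refinement of log structures, induced by pulling back the tropical subdivision provided by Theorem~\ref{thm: main-comb-thm}. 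The equidimensionality and reducedness properties in that theorem, together with the toroidal flatness criterion invoked earlier, imply that $C^\lambda \to \overline s$ is a family of logarithmically smooth curves. Corollary~\ref{cor: log-auts-maps} then applies directly and yields finiteness of the logarithmic automorphism group of $C^\lambda \to X$.

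The main subtlety, and the place where the argument deserves care, is verifying that stability of $C \to X$ is preserved under base-change to the refined log structure at $\overline s$, before the curve modification is applied. This is handled through the Santos--Parker criterion, which is robust with respect to non-minimal base log structures. The semistability of the underlying scheme and the injectivity of the tropical moduli map are immediate from the setup: the former because the curve modification only inserts rational bubbles along existing edges, the latter because the log structure at $\overline s$ is pulled back from the representable tropical moduli $\Kbar^\lambda_\Gamma(\Sigma)$. The delicate condition is that each strict semistable rational component introduced by the subdivision supports a stable smoothing parameter; this follows because any sub-edge obtained by breaking a strongly stable edge between two original stable vertices still lies on a strongly stable path in the subdivided dual graph, so its length sits in the $\mathbb Q$-span of strongly stable lengths, as required.
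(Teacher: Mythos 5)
Your argument follows the same route as the paper, which simply invokes Corollary~\ref{cor: log-auts-maps}: identify the point of $\Kbar^\lambda_\Gamma(X)$ with a non-minimal point of $\mathsf{ACGS}_\Gamma(X)$ whose curve has been logarithmically modified, then apply the corollary. The extra care you take over the base-change to the refined log structure, and your re-verification of the Santos--Parker conditions for the subdivided curve, is not spelled out in the paper's one-line proof but is exactly the content of the proposition preceding Corollary~\ref{cor: log-auts-maps}, so the two arguments are essentially the same.
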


\begin{proof}
This follows immediately from Corollary~\ref{cor: log-auts-maps}.
\end{proof}

The stacks inherit the requisite transversality. Let $C^\lambda\to \mathcal A_X^\lambda\to \mathcal A_X$ be a point of $\Kbar^\lambda_\Gamma(\mathcal A_X)$. 

\begin{lemma}[{\it Transversality}]\label{thm: transversality}
Let $p_i$ be a marked point of $C^\lambda$ with contact order $c_{ij}$ along the divisor $D_j\subset \mathcal A_X$ and zero contact order with the remaining divisors. Let $D_j^\lambda$ be the stratum of the target $\mathcal X^\lambda$ containing the image of $p_i$. Then $C^\lambda$ meets $D_j$ at finitely many points, each marked, and the order of tangency at $p_i$ is equal to $c_{ij}$. 
\end{lemma}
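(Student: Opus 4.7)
The plan is to deduce the lemma directly from the combinatorial transversality statement of Theorem~\ref{thm: main-comb-thm}(3), using the standard dictionary between toroidal strata and cones of the tropicalization. I would begin by recording that for strict logarithmically smooth families such as $C^\lambda\to \mathcal X^\lambda$, the locally closed stratum coming from a cone $\tau$ of $\bplC^\lambda$ maps into the stratum of $\mathcal X^\lambda$ indexed by the smallest cone of $\widetilde\Sigma^\lambda$ containing $F^\lambda(\tau)$. Combined with combinatorial transversality, which forces vertices of $\plC^\lambda$ to map to vertices and edges of $\plC^\lambda$ to map to edges of $\widetilde\Sigma^\lambda$, this yields the key pointwise picture: irreducible components of $C^\lambda$ map into components (codimension-$0$ strata) of $\mathcal X^\lambda$, while nodes and marked points of $C^\lambda$ map into codimension-$1$ strata.

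Next I would identify $D_j^\lambda$ combinatorially. The marked point $p_i$ is the point at infinity of an unbounded edge $e_i$ of $\plC^\lambda$ whose primitive slope is the contact order $c_i$, which by hypothesis lies on the ray $\rho_j\subset \Sigma$. By Theorem~\ref{thm: main-comb-thm}(3), $F^\lambda(e_i)$ is itself an unbounded edge of $\widetilde\Sigma^\lambda$ lying over $\rho_j$, and $D_j^\lambda$ is the codimension-$1$ stratum of $\mathcal X^\lambda$ associated to this edge, which by construction maps to $D_j$ in $\mathcal A_X$.

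The finiteness assertion and the tangency count are then combinatorial checks. First, no irreducible component of $C^\lambda$ lies in $D_j^\lambda$, else its vertex in $\plC^\lambda$ would map into the relative interior of the edge $F^\lambda(e_i)$, contradicting combinatorial transversality. Second, no unmarked node of $C^\lambda$ meets $D_j^\lambda$, since such a node would correspond to a bounded edge of $\plC^\lambda$, and a bounded edge cannot surject onto the unbounded edge $F^\lambda(e_i)$. So the intersection consists only of marked points whose contact vectors are parallel to $\rho_j$, and is thus finite. The tangency order at $p_i$ is by definition the expansion factor of $F^\lambda$ along $e_i$, which equals $c_{ij}$. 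I expect the main obstacle to be primarily notational: one has to carefully distinguish the stratifications of $\mathcal X^\lambda$ and of $\mathcal A_X$ across the logarithmic modification $\mathcal X^\lambda \to \mathcal A_X$, since the single divisor $D_j$ of $\mathcal A_X$ can pull back to many codimension-$1$ strata in $\mathcal X^\lambda$, of which $D_j^\lambda$ is one; once that dictionary is in place the lemma is a direct rephrasing of combinatorial transversality.
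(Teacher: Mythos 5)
Your argument is correct and is essentially the paper's own: both proofs translate combinatorial transversality from Theorem~\ref{thm: main-comb-thm} through the cone--stratum dictionary to conclude that, over any geometric point, components of $C^\lambda$ map to components of the expanded target while nodes and markings map to codimension-one strata. The paper makes the fiberwise picture explicit via a relative-dimension count over the cone $\sigma_P$ of the base, whereas you appeal to the consequence (vertices to vertices, edges to edges) directly, but the substance is the same.
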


\begin{proof}
Consider a geometric point of the algebraic stack $\Kbar^\lambda_\Gamma(\mathcal A_X)$. If the stalk of the characteristic sheaf at this point is a monoid $P$, then by pullback, we obtain a a logarithmic map to $\mathcal A_X$ over $\spec(P\to \CC)$. The dual cone $\sigma_P$ is a cone of $\Kbar^\lambda_\Gamma(\Sigma)$. Let $m$ denote the dimension of the cone $\sigma_P$. We examine the associated family of tropical maps $F:\plC^\lambda\to \Sigma^\lambda\to \Sigma$. Let $G$ be the underlying graph of $\plC$. Every vertex $v$ of $G$ determines an $m$-dimensional cone of the cone complex $\plC$. Note that this cone has relative dimension $0$ over $\Kbar^\lambda_\Gamma(\Sigma)$. Its image in $\Sigma$ is also an $m$-dimensional cone, and therefore the generic point of the component $C_v$ dual to $v$ maps to the generic point of the component of $\mathcal A_X^\lambda$ dual to $F(v)$. Similarly, the edges of $\plC$ which are dual to nodes and marked points, map to either $m$ or $m+1$ dimensional cones of $\Sigma^\lambda$, which are dual to either components or double intersections of components $\mathcal A^\lambda_X$. As a consequence, the components of $C^\lambda$ map to components of $\mathcal A^\lambda_X$, and the nodes of $C^\lambda$ map to double intersections of these components. Since all points of nontrivial contact order are marked, therefore distinct, the contact orders are as prescribed.
\end{proof}

We detail the stable limit of a family of transverse curves in the expanded theory in the simplest nontrivial geometry. A discussion in the non-expanded theory may be found in~\cite[Section~4]{R15b}. 

\begin{example}\label{ex: line-degeneration}
We work with the target $X = \mathbb P^2$ and fix homogeneous coordinates $x,y,z$. The logarithmic structure is given by
\[
D = D_1\cup D_2 = \{x = 0\}\cup \{y = 0\}.
\]
Take the curve class to be that of a line in $\mathbb P^2$. Curves will carry two marked points $p_1$ and $p_2$, and the contact order of $p_1$ with $D_1$ is $1$ and with $D_2$ is $0$. Similarly, the contact order of $p_2$ with $D_2$ is $1$ and with $D_1$ is $0$. 

Work over the discretely valued field $\mathbb C(\!(t)\!)$ and examine the family of logarithmic stable maps given by 
\[
\mathbb V(x+y+tz)\subset \mathbb P^2. 
\]
The marked points are determined by the intersections with $D_1$ and $D_2$, and the maps are inclusions. The flat limit of this embedded family of curves over $\Spec \mathbb C[\![t]\!]$ in the dual projective space is the line $\mathbb V(x+y)$. Observe that this limit would violate the contact order condition since this curve passes through $D_1\cap D_2$. In the moduli space $\mathsf{ACGS}(\mathbb P^2)$, the limiting curve has two source components, each isomorphic to $\mathbb P^1$, joined at a node. The limiting map is
\[
C_0\cup C_1\to \PP^2,
\]
where the curve $C_0$ maps isomorphically onto $\mathbb V(x+y)$, and the component $C_1$ carries both markings and is collapsed to $[0,0,1]$. 

In the expanded theory, the target expansion is constructed by blowing up
\[
\PP^2\times\Spec\CC[\![t]\!]\to \Spec\CC[\![t]\!],
\]
at the point $([0,0,1],0)$. The special fiber is an expansion
\[
p: \mathbb F_1\cup \mathbb P^2\to \mathbb P^2,
\]
obtained as a union of $\mathbb P^2$ blown up at $[0,0,1]$, together with a copy of $\mathbb P^2$. The polyhedral decomposition corresponding to this target degeneration is given in Figure~\ref{fig: expansion-example} below. The exceptional divisor $E$ attaches to a line in $\mathbb P^2$. The map $p$ restricts to a proper surjective map on $\mathbb F_1$ and a constant map on $\mathbb P^2$. The limiting stable map is
\[
C_0\cup C_1\to \mathbb F_1\cup \PP^2\to \mathbb P^2,
\]
with the same source curve as before. The composite map to $\mathbb P^2$ is also the one described earlier. We describe the map to the expansion. The curve $C_0$ maps isomorphically onto the strict transform in $\mathbb F_1$ of the curve $\mathbb V(x+y)$ in $\mathbb P^2$. The curve $C_1$ maps to a line in $\mathbb P^2$, with $p_1$ and $p_2$ mapping to distinct toric boundary curves of this $\mathbb P^2$. The node of the curve maps to the double divisor in the reducible target.

The unexpanded logarithmic map, the minimal logarithmic structure on the base has characteristic $\mathbb N$. Equivalently, the tropical moduli space for curves of this type is $\RR_{\geq 0}$. Since this fan admits no nontrivial conical subdivisions, the expanded space and the logarithmic mapping space, which a priori differ by toroidal modification, are isomorphic in a neighborhood of this point. It is only the universal families that differ. 
\begin{figure}[h!]
\begin{tikzpicture}

\begin{scope}[shift = {(5,0)}]
\fill[white!70!violet, path fading=north] (0,0)--(0,2)--(2,2)--(2,0) -- cycle;
\fill[white!70!violet, path fading=south] (0,0)--(2,0)--(2,-1.414)--(-1.414,-1.414) -- cycle;
\fill[white!70!violet, path fading=west] (0,0)--(0,2)--(-1.414,2)--(-1.414,-1.414) -- cycle;
\end{scope}

\draw[->,violet] (5,0)--(5,2);
\draw[->,violet] (5,0)--(7,0);
\draw[->,violet] (5,0)--(3.686,-1.414);

\draw[->,violet] (6,1)--(6,2);
\draw[->,violet] (6,1)--(7,1);
\draw[-,violet] (6,1)--(5,0);


\draw [ball color=violet] (5,0) circle (0.5mm);
\draw [ball color=violet] (6,1) circle (0.5mm);
\end{tikzpicture}
\caption{The cone over the fan pictured above gives rise to a toric degeneration of $\mathbb P^2$ over $\A^1$. The fiber over $0$ is the degeneration described above. In the discussion above, the component $C_0$ maps to the target component dual to the central vertex, while $C_1$ maps to the component dual to the vertex on the upper right.}\label{fig: expansion-example}
\end{figure}

\end{example}


\subsection{The virtual class}\label{sec: vir-class} We have constructed a Deligne--Mumford stack $\Kbar^\lambda_\Gamma(X)$. There are three ways of constructing a virtual fundamental class.

\subsubsection{The first way} As a fibered category over logarithmic schemes,  $\Kbar^\lambda_\Gamma(X)$ can be identified with a subcategory of $\mathsf{ACGS}_\Gamma(X)$, consisting of those logarithmic stable maps to (an unexpanded) $X$ over $S$, such that the tropical moduli map
\[
S^{\trop}\to \mathsf{ACGS}(\Sigma),
\]
factors through the subcomplex $\Kbar^\lambda_\Gamma(\Sigma)$. Restrict the universal family to this subcategory to obtain a curve family $\mathcal C\to \Kbar^\lambda_\Gamma(X)$, and a morphism $\mathcal C\to X$. The relative obstruction theory of the map
\[
\Kbar_\Gamma^\lambda(X)\to \Kbar_\Gamma^\lambda(\mathcal A_X)
\]
is perfect. Indeed, the argument for this is identical to the one explained in~\cite[Section 6.1]{AW}. Given an $S$-point of $\Kbar_\Gamma^\lambda(X)$, let $S'$ be a strict infinitesimal extension, given by an ideal $I$. Consider the diagram of lifts below:

\[ \vcenter{\xymatrix{
& & X \ar[d]  \\
C \ar[r] \ar@/^15pt/[urr]^f \ar[d] & C' \ar[r] \ar@{-->}[ur] \ar[d] & \mathcal A_X \\
S \ar[r] & S' .
}} \]
Since the extension is strict, the tropical data of $C/S$ and $C'/S'$ are identical. The map $C\to \mathcal A_X$ extends automatically to $C'$. Lifts of these data are a torsor on $C$ of abelian groups $f^\star T^{\mathrm{log}}_X\otimes I$. We thus obtain an obstruction theory for $\Kbar^\lambda_\Gamma(X)$ over $\Kbar_\Gamma^\lambda(\mathcal A_X)$, using the formalism of~\cite[Section~5]{Wis11}. The virtual pull back of the fundamental class under the morphism $\Kbar_\Gamma^\lambda(X)\to \Kbar_\Gamma^\lambda(\mathcal A_X)$ yields a virtual fundamental class on $\Kbar_\Gamma^\lambda(X)$ by~\cite{Mano12}. 

\subsubsection{The midway} The first way ignores any subdivision that has been done to the universal family. That is, both curve and target are unbubbled. We may introduce bubbles in the curve while keeping the target, and the obstruction theory unchanged.

The universal modified curve admits a contraction 
\[
\mathrm{st}: \mathcal C^\lambda\to \mathcal C.
\] 
This morphism is a logarithmic modification, so it is a contraction of chains of rational bubbles. Consider the lifting problem for logarithmic maps $[g:C^\lambda\to X]$ from the modified curve $C^\lambda/S$, to a strict infinitesimal extension $S'$ given by an ideal $I$. Once again, the lifts are given by a torsor on $C^\lambda$ of the abelian groups $g^\star T^{\mathrm{log}}_X\otimes I$. By construction, the maps
\[
C^\lambda\to X
\]
factor through $C^\lambda\to C$. Since the bubbles are rational, pushing forward via $\mathrm{R}\mathrm{st}_\star(\cdot)$ identifies the torsors controlling the obstruction theory coming from $\mathcal C^\lambda$ with those on $\mathcal C$. Again, virtual pull back of the fundamental class from $\Kbar_\Gamma^\lambda(\mathcal A_X)$ gives rise to a virtual class, which by the discussion above is the same virtual class. 

\subsubsection{The second way} In the intermediate version above, we considered only the deformation theory of the collapsed map. Typically, one studies the obstructions to deformations of the map to the expansion. Again, the virtual class is unchanged. 

This follows by considering the lifting problems as above. Specifically, we examine the composition
\[
\mathcal C^\lambda\to \mathcal X^\lambda \to X\times \Kbar^\lambda_\Gamma(X)\to X,
\]
Since the second arrow is logarithmically \'etale over $\Kbar_\Gamma^\lambda(X)$ since it is the pullback of a subdivision. It therefore identifies relative logarithmic tangent bundles. In turn the relative logarithmic tangent bundle of the product is simply pulled back from the logarithmic tangent bundle of $X$. Once again, the torsors controlling lifts over strict extensions are naturally identified, and the virtual pullback produces a virtual class. In all cases, Manolache's results lead to the following~\cite{Mano12}.

\begin{theorem}
The Deligne--Mumford stack $\Kbar^\lambda_\Gamma(X)$ carries a virtual fundamental class in expected dimension. This class is equal to the virtual pullback of the ordinary fundamental class along the morphism
\[
\Kbar_\Gamma^\lambda(X)\to \Kbar_\Gamma^\lambda(\mathcal A_X).
\]
\end{theorem}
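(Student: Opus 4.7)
The plan is to apply Manolache's virtual pullback formalism to the strict morphism $\Kbar^\lambda_\Gamma(X) \to \Kbar^\lambda_\Gamma(\mathcal A_X)$, after equipping it with a relative perfect obstruction theory. The preceding three subsections already furnish three equivalent inputs; I would codify this into a single argument that first constructs the obstruction theory in the cleanest setting and then shows the resulting class does not depend on the choice of model.

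First I would work in the ``unbubbled'' setting. Here $\Kbar^\lambda_\Gamma(X)$ is identified with the subcategory of $\mathsf{ACGS}_\Gamma(X)$ whose tropical moduli factors through $\Kbar^\lambda_\Gamma(\Sigma)$, so the forgetful map to $\Kbar^\lambda_\Gamma(\mathcal A_X)$ is strict, with universal curve $\mathcal C$ and universal map $f:\mathcal C \to X$. Given a strict square-zero extension $S \hookrightarrow S'$ with ideal $I$, the Artin-fan map lifts canonically by strictness; lifts of $f$ then form a torsor on $\mathcal C$ under $f^\star T^{\mathrm{log}}_X \otimes I$ with obstructions in the corresponding $H^1$. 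Wise's formalism packages this into a relative perfect obstruction theory of amplitude $[-1,0]$ for $\Kbar^\lambda_\Gamma(X) \to \Kbar^\lambda_\Gamma(\mathcal A_X)$.

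Next I would check that Manolache's hypotheses hold. The base $\Kbar^\lambda_\Gamma(\mathcal A_X)$ is pulled back from $\fM_\Gamma(\mathcal A_X)$, which is equidimensional and logarithmically smooth over $\fM_{g,n}$, via the subdivision $\Kbar^\lambda_\Gamma(\Sigma) \to \mathsf{ACGS}(\Sigma)$; by the equidimensionality and reducedness conclusions of Theorem~\ref{thm: main-comb-thm}, the modification remains equidimensional, so its fundamental class is defined. Manolache's virtual pullback then delivers a virtual fundamental class in the expected dimension on $\Kbar^\lambda_\Gamma(X)$, and the formula in the theorem is exactly the definition of this class.

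Finally I would argue independence of the three models. For the ``midway'' variant, the bubbling $\mathrm{st}:\mathcal C^\lambda \to \mathcal C$ is a composition of rational-tree contractions, so $R\mathrm{st}_\star$ applied to the pullback of $T^{\mathrm{log}}_X$ recovers the original tangent sheaf, identifying the torsors of lifts. For the fully expanded variant, $\mathcal X^\lambda \to X \times \Kbar^\lambda_\Gamma(X)$ is a pullback of a polyhedral subdivision, hence logarithmically \'etale, and composition with the projection to $X$ identifies relative logarithmic tangent bundles. The main obstacle is this last compatibility: one must verify that the identifications of torsors survive as quasi-isomorphisms of the map to the cotangent complex realising the obstruction theory, so that Manolache's virtual class is intrinsic and genuinely describes the geometry of the expanded moduli space rather than depending on the auxiliary presentation.
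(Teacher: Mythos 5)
Your proposal follows essentially the same strategy as the paper: construct the relative obstruction theory via Wise's formalism of torsors over strict square-zero extensions (the key input being that strictness pins down the tropical data, so only the lift of the map to $X$ varies and is controlled by $f^\star T^{\mathrm{log}}_X\otimes I$), invoke Manolache's virtual pullback against $\Kbar^\lambda_\Gamma(\mathcal A_X)$, and then verify that the ``bubbled'' models give the same class via $R\mathrm{st}_\star$-invariance under rational-tree contractions and via the logarithmic-\'etaleness of $\mathcal X^\lambda\to X\times\Kbar^\lambda_\Gamma(X)$. Your added check of Manolache's hypotheses through the equidimensionality and reducedness clauses of Theorem~\ref{thm: main-comb-thm} is a welcome explicitness that the paper leaves implicit, but the content and route are the same.
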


\subsection{Virtual birationality}\label{sec: vir-birationality} By construction, each of our moduli spaces come with a morphism
\[
\Kbar_\Gamma^\lambda(X) \to \mathsf{ACGS}_\Gamma(X).
\]
This may be seen in a moduli theoretic manner. Given a map to an expansion
\[
C^\lambda\to \mathcal X^\lambda\to X,
\]
over $S$, stabilize the underlying morphism to $X$ to obtain $[C\to X]$ over $S$, and equip it with the pushforward logarithmic structure~\cite[Appendix~B]{AMW12}. The logarithmic structure on $S$ may not be minimal, but by the universal property of minimality the map determines a minimal object. 

The different approaches to the relative theory give the same invariants, c.f.~\cite{AMW12}.

\begin{proposition}\label{prop: vir-birationality}
Pushforward under the morphism
\[
\Kbar_\Gamma^\lambda(X) \to \mathsf{ACGS}_\Gamma(X).
\]
identifies virtual classes. The logarithmic invariants of $X$ are computed by the expanded theory.
\end{proposition}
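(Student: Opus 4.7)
The plan is to deduce the identity of virtual classes from a base change statement for virtual pullbacks, after showing the relevant square of stacks is Cartesian and the obstruction theories are compatibly pulled back. Concretely, I would first verify that
\[
\begin{tikzcd}
\Kbar_\Gamma^\lambda(X) \arrow{r}{j} \arrow{d}{\pi^\lambda} & \mathsf{ACGS}_\Gamma(X) \arrow{d}{\pi} \\
\Kbar_\Gamma^\lambda(\mathcal A_X) \arrow{r}{\bar{j}} & \mathsf{ACGS}_\Gamma(\mathcal A_X)
\end{tikzcd}
\]
is Cartesian. This follows from the construction of $\Kbar_\Gamma^\lambda(\cdot)$: both the top and bottom arrows are obtained by pulling back the subdivision $\Kbar_\Gamma^\lambda(\Sigma)\to \mathsf{ACGS}_\Gamma(\Sigma)$ along the respective tropicalization morphisms, and tropicalization of $\mathsf{ACGS}_\Gamma(X)$ factors through tropicalization of $\mathsf{ACGS}_\Gamma(\mathcal A_X)$. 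In particular $\bar{j}$ is a logarithmic modification, hence proper, birational, and representable, between logarithmically smooth algebraic stacks of the same dimension.

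Next I would match obstruction theories. The virtual class on $\mathsf{ACGS}_\Gamma(X)$ is the virtual pullback of the fundamental class of $\mathsf{ACGS}_\Gamma(\mathcal A_X)$ along $\pi$, with respect to the relative perfect obstruction theory controlled by $\R\pi_{s,\star}(f^\star T^{\mathrm{log}}_X)$ where $f:\mathcal C\to X$ is the universal map. By the discussion in Section~\ref{sec: vir-class}, the virtual class on $\Kbar_\Gamma^\lambda(X)$ is the virtual pullback along $\pi^\lambda$ using the analogous obstruction theory on the modified universal curve. Since $j$ is strict and the universal curve and target on the left are the pullbacks of those on the right along $\bar{j}$, the cotangent complex $\mathbb{L}_{\pi^\lambda}$ and its obstruction theory are the $\bar{j}$-pullback (in the Cartesian sense) of $\mathbb{L}_\pi$ and its obstruction theory. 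This is exactly the setup required for compatibility of virtual pullback under base change.

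Having set this up, I would invoke Manolache's base change theorem for virtual pullbacks~\cite{Mano12}: in the Cartesian square above, one has $j^\star\circ \pi^! = (\pi^\lambda)^!\circ \bar{j}^\star$ as operations on Chow groups, and dually $\pi_\star\circ j_\star = \bar{j}_\star\circ \pi^\lambda_\star$ commutes with the virtual operations in the sense that
\[
j_\star [\Kbar_\Gamma^\lambda(X)]^{\mathrm{vir}} = j_\star (\pi^\lambda)^! [\Kbar_\Gamma^\lambda(\mathcal A_X)] = \pi^! \bar{j}_\star [\Kbar_\Gamma^\lambda(\mathcal A_X)].
\]
It therefore remains to show that $\bar{j}_\star[\Kbar_\Gamma^\lambda(\mathcal A_X)] = [\mathsf{ACGS}_\Gamma(\mathcal A_X)]$. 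This is the statement that a proper birational morphism of equidimensional logarithmically smooth algebraic stacks pushes forward the fundamental class to the fundamental class; since $\bar{j}$ is a logarithmic modification it has connected fibres of degree one over a dense open, and the claim follows from the standard pushforward formula for birational morphisms between irreducible components of each connected component of the target. Combining these two equalities gives the pushforward identity
\[
j_\star [\Kbar_\Gamma^\lambda(X)]^{\mathrm{vir}} = [\mathsf{ACGS}_\Gamma(X)]^{\mathrm{vir}},
\]
from which the statement about logarithmic invariants is immediate by the projection formula applied to pullbacks of operational classes from $\mathsf{ACGS}_\Gamma(X)$ (for instance evaluation maps at markings, $\psi$-classes, and Hodge-type classes).

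The main obstacle I expect is the compatibility of obstruction theories under the base change, i.e.\ checking carefully that the perfect obstruction theory used to define $[\Kbar_\Gamma^\lambda(X)]^{\mathrm{vir}}$ really is the pullback (in the derived-category sense compatible with Manolache's framework) of the one used on $\mathsf{ACGS}_\Gamma(X)$; once the three equivalent descriptions of the obstruction theory from Section~\ref{sec: vir-class} are unpacked, this reduces to the fact that the logarithmic tangent bundle $T^{\mathrm{log}}_X$ and the universal map $\mathcal C\to X$ are preserved under the logarithmic modification that defines $\Kbar_\Gamma^\lambda(\mathcal A_X)$ inside $\mathsf{ACGS}_\Gamma(\mathcal A_X)$.
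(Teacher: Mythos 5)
Your proof takes essentially the same route as the paper. The paper sets up the same four spaces, observes that the obstruction theory for $\Kbar_\Gamma^\lambda(X)\to \Kbar_\Gamma^\lambda(\mathcal A_X)$ is pulled back from the one for $\mathsf{ACGS}_\Gamma(X)\to \mathsf{ACGS}_\Gamma(\mathcal A_X)$, and then invokes Costello's pushforward theorem~\cite[Theorem~5.0.1]{Cos06} followed by the projection formula. You articulate the Cartesian square explicitly and appeal to Manolache's base-change formula for virtual pullbacks~\cite{Mano12} together with the pushforward of the fundamental class under a proper birational morphism of logarithmically smooth stacks; the two citations implement the same comparison and your unpacking is a correct and slightly more explicit version of the paper's argument.
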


\begin{proof}
The result follows from the discussion above, using the ideas in~\cite[Section~6]{AW}. Indeed, there is a birational morphism
\[
\Kbar^\lambda_\Gamma(\mathcal A_X)\to \mathsf{ACGS}_\Gamma(\mathcal A_X).
\]
As discussed above we may construct virtual classes for maps to $\mathcal X^\lambda$ and minimal maps to $X$ relative to these two moduli spaces. However, the obstruction theory for $  \Kbar_\Gamma(X)\to \Kbar_\Gamma(\mathcal A_X)$ clearly pulls back to the obstruction theory for $  \Kbar^\lambda_\Gamma(X)\to \Kbar^\lambda_\Gamma(\mathcal A_X)$. The result follows from a theorem of Costello~\cite[Theorem~5.0.1]{Cos06}. The statement about Gromov--Witten invariants follows from the projection formula. 
\end{proof}

Cosmetic changes to the above argument also yield the following.

\begin{proposition}
Given any two moduli spaces $\Kbar_\Gamma^\lambda(X)$ and $\Kbar_\Gamma^\mu(X)$ of transverse stable maps to expansions and a logarithmic modification
\[
\Kbar_\Gamma^\lambda(X)\to \Kbar_\Gamma^\mu(X),
\]
pushforward identifies the virtual classes. 
\end{proposition}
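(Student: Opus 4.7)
The plan is to essentially repeat the argument of Proposition~\ref{prop: vir-birationality}, but with $\mathsf{ACGS}_\Gamma(X)$ replaced by $\Kbar_\Gamma^\mu(X)$ as the ``unexpanded'' target. The structure of the proof is forced on us by the fact that, for both $\lambda$ and $\mu$, the virtual class is built by virtual pullback from the corresponding Artin-fan moduli space, whose obstruction theory in each case comes from the logarithmic tangent bundle of $X$ pulled back to the universal curve.

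First, I would fit the given logarithmic modification $\Kbar^\lambda_\Gamma(X)\to \Kbar^\mu_\Gamma(X)$ into a commutative diagram
\[
\begin{tikzcd}
\Kbar^\lambda_\Gamma(X) \arrow{r}\arrow{d} & \Kbar^\mu_\Gamma(X)\arrow{d} \\
\Kbar^\lambda_\Gamma(\mathcal A_X)\arrow{r} & \Kbar^\mu_\Gamma(\mathcal A_X),
\end{tikzcd}
\]
in which the bottom arrow is the logarithmic modification obtained by pulling back the subdivision $\Kbar^\lambda_\Gamma(\Sigma)\to \Kbar^\mu_\Gamma(\Sigma)$ of tropical moduli stacks (which exists by Lemma~\ref{common-subdivision} and the construction of the $\Kbar^{\bullet}_\Gamma(\mathcal A_X)$ via the tropicalization morphism). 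Since each vertical arrow is strict and representable, and the horizontal maps are logarithmic modifications, this diagram is in fact cartesian in fine and saturated logarithmic stacks.

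Next I would unwind the three equivalent descriptions of the virtual class given in Section~\ref{sec: vir-class} to see that the obstruction theory of $\Kbar^\lambda_\Gamma(X)\to \Kbar^\lambda_\Gamma(\mathcal A_X)$ is pulled back from the obstruction theory of $\Kbar^\mu_\Gamma(X)\to \Kbar^\mu_\Gamma(\mathcal A_X)$. Concretely, both obstruction theories are given by the complex whose lifting torsor on the universal curve is governed by $f^\star T^{\mathrm{log}}_X$; passing between the $\lambda$ and $\mu$ universal curves and targets only involves logarithmic modifications that are isomorphisms generically and, along each rational bubble introduced on the curve, are contracted by $\mathrm{R}\mathrm{st}_\star$ without changing the controlling torsors. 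Consequently the obstruction theories on both sides are compatible in the sense required by the virtual-pullback formalism of Manolache.

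Finally, Costello's theorem~\cite[Theorem~5.0.1]{Cos06} applies: the bottom horizontal arrow $\Kbar^\lambda_\Gamma(\mathcal A_X)\to \Kbar^\mu_\Gamma(\mathcal A_X)$ is a proper, birational morphism of pure-dimensional Deligne--Mumford type stacks (it is a logarithmic modification of logarithmically smooth stacks), so pushforward sends $[\Kbar^\lambda_\Gamma(\mathcal A_X)]$ to $[\Kbar^\mu_\Gamma(\mathcal A_X)]$; applying virtual pullback and the compatibility of virtual pullback with proper pushforward under Costello-type morphisms gives the desired identity of virtual classes. The main obstacle is verifying the compatibility of obstruction theories across $\lambda$ and $\mu$ carefully, i.e.\ checking that the rational-bubble contractions and target expansions do not affect the torsor governing lifts, exactly as in the ``midway'' and ``second way'' arguments of Section~\ref{sec: vir-class}; everything else is a formal consequence of Costello's theorem and the cartesian square above.
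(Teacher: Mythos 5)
Your argument is exactly the paper's: the paper dispatches this proposition with the remark ``cosmetic changes to the above argument'' (i.e.\ to the proof of Proposition~\ref{prop: vir-birationality}), which is precisely what you carry out — a cartesian square over the Artin-fan moduli spaces, compatibility of the $f^\star T^{\mathrm{log}}_X$-governed obstruction theories across the modification, and Costello's pushforward theorem. Your spelled-out verification that the square is cartesian (both $\Kbar^\lambda_\Gamma(X)$ and $\Kbar^\mu_\Gamma(X)$ are base changes of $\mathsf{ACGS}_\Gamma(X)\to\mathsf{ACGS}_\Gamma(\mathcal A_X)$ along compatible modifications) and the appeal to the ``midway''/``second way'' discussion are correct elaborations of what the paper leaves implicit.
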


\begin{remark}\label{rem: inverse-limits}
Since common refinements of subdivisions exist, the collection of logarithmic modifications of $\Kbar^\lambda_\Gamma(X)$ forms an inverse system. As a consequence of the proposition, there is a well-defined virtual class in the group $\varprojlim_\lambda A_\star(\Kbar^\lambda_\Gamma(X),\mathbb Q)$, where the inverse limit is taken under proper pushforward maps, see~\cite{Alu05} for a discussion of such inverse limit Chow groups. Note that the inverse limit is the logarithmic Chow \textit{homology} groups~\cite{Bar18}, rather the much smaller direct limit Chow cohomology groups examined in~\cite{MPS20}.
\end{remark}

\subsection{What is this a moduli space of?}\label{sec: really-a-moduli-space} Each of the spaces $\Kbar^\lambda_\Gamma(X)$ constructed here is a solution to a moduli problem over \textit{schemes}. Over logarithmic schemes, tautologically, they are defined as subcategories of $\mathsf{ACGS}_\Gamma(X)$ of families of curves over a base whose tropicalizations lie in the subdivision indexed by $\lambda$. Indeed, logarithmic modifications are always subcategories when considered in this manner, see~\cite[Definition~3.8]{Kato-LogMod} and~\cite[Section~2.5]{RSW17A}. This category admits minimal objects -- those families where at each closed point, the tropicalization map is an isomorphism onto a cone of the $\lambda$-subdivision. The category is therefore representable by a Deligne--Mumford stack~\cite[Appendix~B]{Wis16a}. It ignores the modifications done to the curve and target family. The same category also carries a second modular interpretation, coming from the universal modified curve and target. Once again, over logarithmic schemes, one studies families of logarithmic maps to an expansion of $X$ such that the universal curve and target are pulled back from the tropical families corresponding to the indexing element $\lambda$.
%
%
\newpage

{\Large \part{The gluing formula}}

A basic intuition guides this part of the paper. Let $\mathscr Z\to \A^1$ be a simple normal crossings degeneration, with general and special fibers $Z_\eta$ and $Z_0$. Given a curve $C_\eta$ in $Z_\eta$, we can specialize it to obtain a curve in $Z_0$. The reverse is a problem in deformation theory. Given a curve $C_0$ in $Z_0$, one needs to smooth it out over $\A^1$ in order to describe a curve in $Z_\eta$. In general this problem is much too hard, and one must either work in the logarithmic category or place hypotheses on the curve in the special fiber. A natural hypothesis is that the components of $C_0$ interact nicely with the strata of $Z_0$: each component meets only the codimension $1$ strata of $Z_0$ at points and is disjoint from the higher codimension strata. In this case, there is a \textbf{predeformability} condition: when two components meet at a point along a divisor of $Z_0$, the orders of vanishing must be equal. This condition is necessary for a smoothing to exist.

In simple cases, such as rational curves in toric varieties, predeformability is sufficient for deformability~\cite{MR16,NS06,R15b}.  The general problem is hopeless, but for the purposes of working with the virtual geometry of the space of maps, one pretends as if this condition suffices always, and Jun Li's relative Gromov--Witten theory is built from this point of view. When the special fiber of $\mathcal Z$ is not allowed to expand, the right component of the space of maps to the special fiber is the locus of logarithmic maps. However, the intuitive picture of gluing along components breaks down. One needs to study maps into the strata of the degeneration, which is where punctured logarithmic curves enter the picture~\cite{ACGS17}. Using the previous part of the paper, we reintroduce the transversality requirement, obtaining a proper space of maps to expanded targets, and prove the gluing formula following this picture. 

In order to work virtually, we work tropically. An inventive observation of Abramovich and Wise is that the space of maps to $\mathscr Z$ is virtually smooth over the space of tropical maps to the tropicalization~\cite{AW}. We take this observation to its extreme by first proving the gluing formula entirely within the combinatorial framework. We degenerate the cone complex $\Sigma_{\mathscr Z}$ into an extended cone complex -- a singular tropical object. Tropical nodal curves in the special fiber of the tropical degeneration can have nothing to do with maps to a general fiber, but we find that \textit{predeformability is tropical deformability}. After modifying the tropical targets to ensure combinatorial transversality, the nodal tropical curves may always be smoothed out leading to a tropical gluing formula for the cone complexes. Standard virtual pullback techniques imply the gluing formula. \\

\section{Extended tropicalizations}

Extended tropicalizations are compactifications of fans and polyhedral complexes obtained by adding faces at infinity which are themselves such complexes. We will use them to describe degenerations of tropical varieties to singular ones. A careful treatment of extended cone complexes can be found in~\cite[Section~2]{ACP}, and the relationship with Berkovich spaces and tropical geometry can be found in~\cite{Pay09,Thu07}. An interesting \textbf{pointification} viewpoint on the matter, considering piecewise linear functions taking the value infinity, has been introduced by Huszar--Marcus--Ulirsch~\cite{HMU}. This pointification is functorial, and sets the discussion below on sound categorical footing.

Let $\Sigma$ be a cone complex with integral structure. Each cone $\sigma\in\Sigma$ is obtained from its dual cone $S_\sigma$ of linear functions as the space of monoid homomorphisms
\[
\sigma = \Hom_{\bf Mon}(S_\sigma,\RR_{\geq 0}).
\]
By replacing $\RR_{\geq 0}$ with the extended monoid $\mathbf{R}_{\infty} = \RR_{\geq 0}\sqcup\{\infty\}$ equipped with its order topology, we obtain a compactification, the \textbf{extended cone}
\[
\sigma\hookrightarrow \overline \sigma = \Hom_{\bf Mon}(S_\sigma,\mathbf{R}_{\infty}).
\]
The cone complex $\Sigma$ can be described as a colimit of a diagram of cones, and by replacing each cone with its extended cone we obtain a canonical compactification
\[
\Sigma\hookrightarrow \overline \Sigma. 
\]
{
\begin{warning}\label{warning: many-bijections}
Pictures of the extended tropicalization can cause some consternation, as both order reversing and order preserving bijections are at play. The extended tropicalization of a toric variety $X$, with fan $\Sigma$, and dense torus $\mathbb G_m^r$ is equipped with an action of the vector group $\mathbb R^r$. The extended tropicalization is stratified by the orbits of this group action, and is in order preserving bijection with the orbits of the $\mathbb G_m^r$ action on $X$

There is an order reversing bijection between the \textit{closures} in the extended tropicalizations of the $k$-dimensional cones of $\Sigma$ and the codimension $k$ strata of $X$, see Figure~\ref{fig: ext-trop-P2}. Since each face at infinity can be interpreted as the extended tropicalization of a toric variety in the boundary of $X$, the order reversing and order preserving bijections interact in the boundary. While this may cause some initial confusion, it is an efficient tool in recording the necessary combinatorics for the degeneration problem. 
\end{warning}}

\begin{figure}
\includegraphics[scale=0.25]{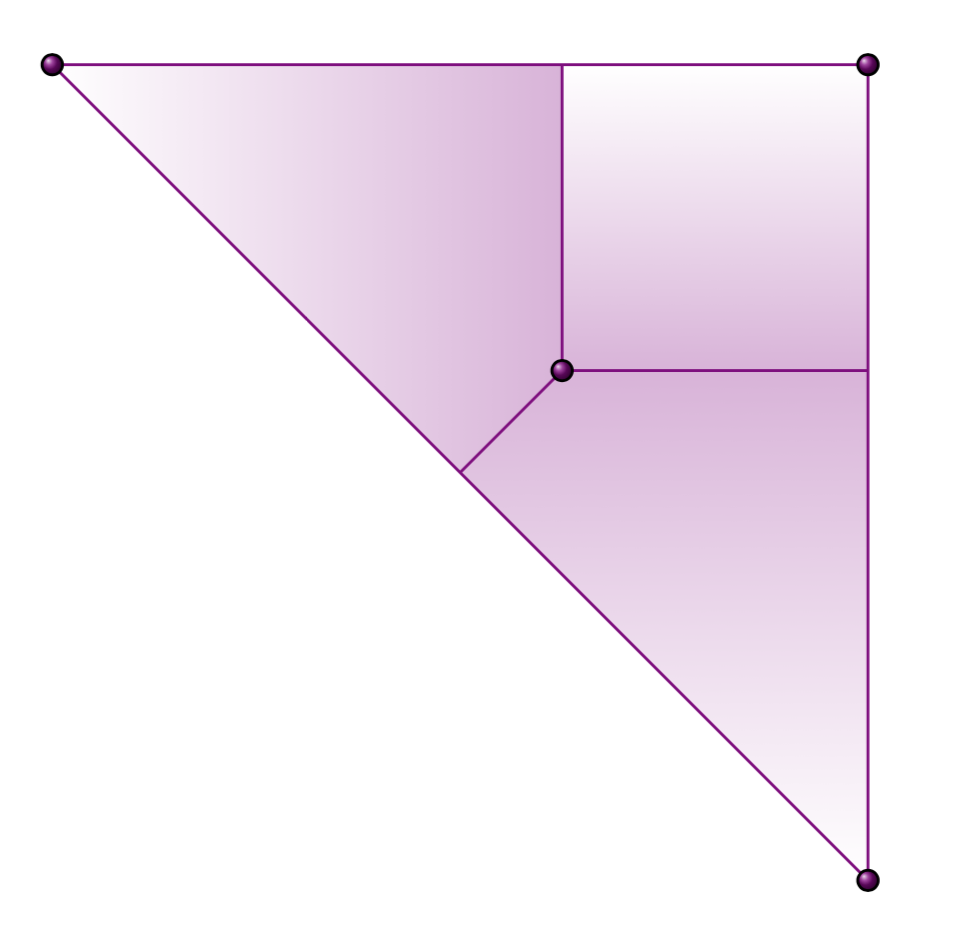}
\caption{The extended tropicalization of $\PP^2$ consists of $3$ extended cones. The face at infinity perpendicular to each ray is the extended tropicalization of the dual stratum. }
\label{fig: ext-trop-P2}
\end{figure}

Given a polyhedral complex $\mathscr P$ that is not necessarily a cone complex, one similarly obtains a compactification $\overline{\mathscr P}$ of the asymptotic directions as follows. Let $\Sigma_{\mathscr P}$ be the cone over the polyhedral complex. The height $1$ slice of the extended cone complex $\overline{\Sigma}_{\mathscr P}$ gives a compactification of $\mathscr P$

Extended cones can be glued along extended faces by taking colimits to obtain a larger class of objects, as has already been suggested in the literature~\cite[Remark~2.4.1]{ACP}. We will be concerned with the following three non-exclusive examples.
\begin{enumerate}[(A)]
\item {\bf Nodal tropical curves.} Given a pointed tropical curve $\plC$, its compactification is obtained by adding one point at infinity on each unbounded half-edge of $\plC$. By gluing such extended tropical curves along their infinite points one obtains objects that we call \textbf{nodal tropical curves}.
\item {\bf Boundaries.} Given a cone complex $\Sigma$, the complement $\overline \Sigma\setminus\Sigma$ is a union of the extended faces. This has an irreducible component decomposition into compactified cone complexes corresponding to the rays of $\Sigma$. 
\item {\bf Degenerate fibers.} Given a cone complex $\Sigma$ and a surjective map of cone complex $\Sigma\to \mathbb{R}_{\geq0}$, pass to the associated map $\overline\Sigma\to \Rb_\infty$. The fiber over $\infty$ is a union of extended cone complexes glued along boundary divisors. 
\end{enumerate}

A picture that encapsulates all three examples is given Figure~\ref{fig: tropical-degeneration}.

\begin{figure}[h]
\includegraphics[scale=0.45]{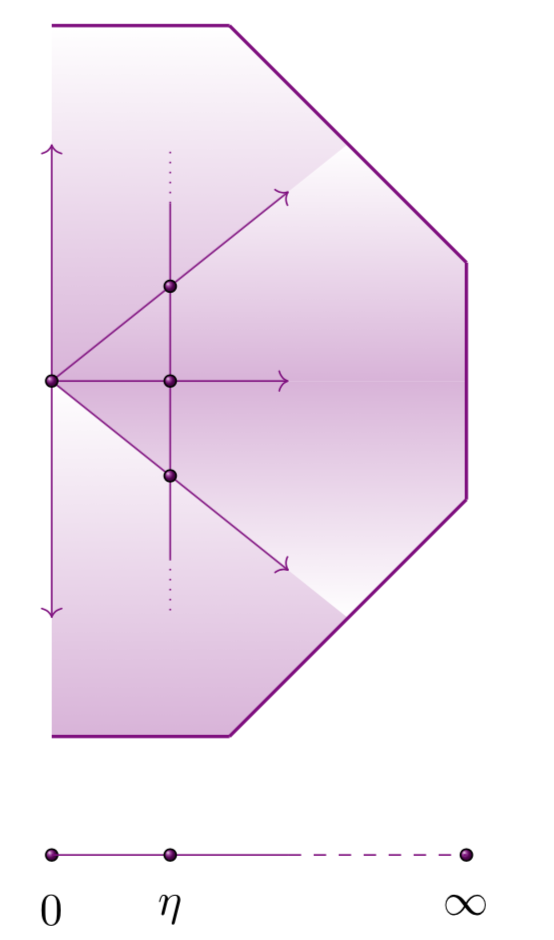}
\caption{The figure describes a tropical degeneration over $\Rb_\infty$ which arises as the tropicalization of a toric degeneration of $\PP^1$ over $\A^1$. The special fiber consists of three components, each equal to $\RR\sqcup \{\pm\infty\}$. Over a general point $\eta$ in $\RR$, the fiber is a subdivided real line with three vertices dual to the three components in the special fiber. }
\label{fig: tropical-degeneration}
\end{figure}

\section{The tropical gluing formula}\label{sec: tropical-gluing}

In this section, we prove a gluing formula entirely within combinatorial geometry. A degeneration of a cone complex $\overline \Sigma$ over $\mathbf{R}_{\infty}$ gives rise to a degeneration of the space of tropical maps. We describe the components of the degenerate moduli space by gluing maps from curves to the components of the degenerate fiber. 

\subsection{The tropical moduli space of maps} Let $\Sigma\to \RR_{\geq 0}$ be a surjective morphism of cone complexes and let $\overline \Sigma\to \Rb_{\infty}$ be the associated morphism of extended complexes. Let $\Sigma_r$ denote the fiber over the point $r\in \Rb_\infty$. We consider $\Sigma_\infty$ to be a degenerate limit of $\Sigma_0$. 

A family of tropical stable maps to a family $\Sigma\to \RR_{\geq 0}$, parameterized by a cone $\tau$, is a commutative square
\[
\begin{tikzcd}
\plC \arrow{r}{F}\arrow{d} & \Sigma\arrow{d} \\
\tau \arrow{r} & \RR_{\geq 0},
\end{tikzcd}
\]
where $\plC$ is a family of tropical curves, and $F$ is a family of tropical maps to $\Sigma$. We obtain a stack over the category of cones, whose fiber over $\tau$ is the groupoid of tropical stable maps over $\tau$. As before, we restrict our attention to the subcategory of maps of a fixed finite set of combinatorial types. The result is a finite type tropical moduli stack 
\[
\pi: \mathsf{ACGS}_\Gamma(\Sigma)\to \RR_{\geq 0}
\]
parameterizing such maps. The morphism $\pi$ presents the moduli space as a family over $\RR_{\geq 0}$. Let $\rho_i$ be a ray of the moduli space and let $v_i$ be its primitive integral generator. Then $\pi(v_i)$ generates a submonoid of some index in $\mathbb N$. Call this index $m_{\rho_i}$. 

Passing to extended tropicalizations gives rise to a morphism  
\[
\pi: \overline {\mathsf{ACGS}}_\Gamma(\overline{\Sigma})\to \Rb_{\infty}.
\]
The components lying above $\infty$ are in bijection with the rays $\rho_i$ that dominate $\RR_{\geq 0}$. As we soon describe, each such extended component is a moduli stack of tropical maps. As $\pi$ is a piecewise linear function, we expect a ``rational equivalence'' of the form
\begin{equation}\label{eq: rational-equivalence}
[\overline{\mathsf{ACGS}}_\Gamma(\overline \Sigma_0)] \ \aquarius \sum_{i} \frac{m_{\rho_i}}{\mathsf{Aut}(\rho_i)} [\overline {\mathsf{ACGS}}_{\rho_i}(\Sigma_\infty)].
\end{equation}
Here, $\overline {\mathsf{ACGS}}_{\rho_i}(\Sigma_\infty)$ is a finite cover of the component at infinity dual to $\rho_i$, obtained by removing generic automorphisms. We will not need to make the rational equivalence here precise, but if we use the categorical equivalence of cone complexes with Artin fans, a precise translation can be made in the Chow group of an appropriate family of Artin stacks~\cite{CCUW,U16}, and this implies the statement in~\cite[Section~4.1]{ACGS15}. 

Our next task is to describe the infinite faces $\overline {\mathsf{ACGS}}_{\rho_i}(\Sigma_\infty)$ in terms of maps to the strata. 

\subsection{Extended faces} Fix a ray $\rho$ of the tropical moduli space $\mathsf{ACGS}_\Gamma(\Sigma)$. We now describe a moduli space $\overline {\mathsf{ACGS}}_{\rho}(\Sigma_\infty)$ which is a finite cover of the infinite face of $\overline{\mathsf{ACGS}}_\Gamma(\Sigma)$ dual to $\rho$, as follows.

Choose a general point in the ray $\rho$ of ${\mathsf{ACGS}}_\Gamma(\Sigma)$. Since the multiplicity of vertical divisors will be equal to zero, we lose no generality in assuming that $\rho$ surjects onto $\RR_{\geq 0}$. Let the general point chosen map to $\eta\in \RR_{\geq 0}$. The fiber $\Sigma_\eta$ of the tropical degeneration is a polyhedral complex, and the fiber in $\mathsf{ACGS}_\Gamma(\Sigma)$ over $\eta$ intersected with $\rho$ is a possibly stacky point of the moduli space. This determines a single tropical map
\[
f_\eta: \plC_\eta\to \Sigma_\eta
\]
up to automorphisms. Since the point was chosen on a ray, the map is \textbf{rigid}: any deformation of the map $[f_\eta]$ changes the combinatorial type. We make the following additional assumption.

\begin{assumption}\label{assump: geneirc-transverse-trop}
{The curve $\plC_\eta$ is combinatorially transverse to $\Sigma_\eta$. That is, the image of every vertex of $\plC$ is a vertex of $\Sigma_\eta$ and the image of every edge is an edge.} 
\end{assumption}

\noindent
{\it Justification.} As described above, the ray $\rho$ determines a family of tropical maps, and we may subdivide $\Sigma$, inducing a subdivision of $\Sigma_\eta$ such that the transversality condition is met for the map $\plC_\eta\to \Sigma_\eta$, for all $\eta$. It follows immediately from the arguments in~\cite{AW} that this has the effect of replacing the moduli space with a modification. We are only interested in describing the component $\mathsf{ACGS}_\rho(\Sigma_\infty)$ up to modifications, so this will have no effect. 

We obtain a family of tropical maps to a family of targets, depending on a parameter $\eta$ in $\RR_{\geq 0}$. The curve $\plC_\eta$ has bounded edges whose lengths are linear functions in $\eta$. The special fiber of the tropical family is obtained by $\eta\to \infty$. The point at infinity of $\rho$ parameterizes a map
\[
f_\infty: \plC_\infty\to \Sigma_\infty.
\]

\begin{lemma}
The edges of $\plC_\infty$ are either (1) infinite rays associated to marked points, or (2) infinitely long edges, identified with the singular metric space $\Rb_{\infty}\sqcup_\infty \Rb_{\infty}$.
\end{lemma}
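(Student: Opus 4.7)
The plan is to realize $\plC_\infty$ as the fiber over $\infty \in \overline\rho$ of the extended family $\overline\plC \to \overline\rho$, and to analyze the cone decomposition of $\plC$ one cone at a time.

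First I would record the cone structure of $\plC \to \rho$. Since $\rho \cong \RR_{\geq 0}$ is a ray, the cones of $\plC$ that project non-trivially to $\rho$ come in two flavors: marked-end cones $\sigma_{p_i} \cong \rho \times \RR_{\geq 0}$ (one for each marked point, with projection to $\rho$ the first factor), and bounded-edge cones $\sigma_e$ whose two extremal rays are the vertex rays $\sigma_{v_1}, \sigma_{v_2}$ of the endpoints of $e$. In $\sigma_e$, choose coordinates $(x,y)$ along $(\sigma_{v_1}, \sigma_{v_2})$; then $\sigma_e \cong \RR_{\geq 0}^2$ and the projection to $\rho$ becomes $(x,y)\mapsto x+y$. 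This rescaling uses that the edge length $\ell(e) \in S_\rho$ is strictly positive (otherwise $e$ would be contracted and would not be an edge), so that neither vertex ray is vertical over $\rho$.

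Next I would compute the fibers over $\infty \in \overline\rho$ cone by cone. For a marked-end cone, $\overline{\sigma}_{p_i} = \Rb_\infty \times \Rb_\infty$ and the fiber over $\infty$ is $\{\infty\}\times \Rb_\infty$, a single copy of $\Rb_\infty$ attached at a limit vertex and extending out to its own point at infinity; this is precisely an infinite ray associated to a marked point, giving case~(1). For a bounded-edge cone, $\overline\sigma_e = \Rb_\infty^2$ and the fiber of $(x,y)\mapsto x+y$ over $\infty$ is
\[
\{(x,y)\in \Rb_\infty^2 : x+y=\infty\} = (\{\infty\}\times \Rb_\infty) \cup (\Rb_\infty \times \{\infty\}),
\]
which is two copies of $\Rb_\infty$ glued at the single point $(\infty,\infty)$. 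This is the singular metric space $\Rb_\infty \sqcup_\infty \Rb_\infty$ and gives case~(2).

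Finally I would glue these per-cone descriptions globally. Adjacent cones $\sigma_e, \sigma_{e'}$ share a vertex ray $\sigma_v$, and in the extended tropicalization their fibers over $\infty$ share the limit point $\infty \in \overline{\sigma}_v$; thus the global $\plC_\infty$ is assembled from the per-cone extended fibers along this incidence data, recovering the nodal tropical curve structure described in the lemma. The only obstacle, a mild one, is bookkeeping: one must confirm that no further cones of $\plC$ contribute edges (there are none beyond marked-end and bounded-edge 2-cones, since the projection $\plC \to \rho$ is linear on cones and $\rho$ is 1-dimensional), and that vertex rays only contribute limit vertices rather than extra edges. Granting this, the lemma reduces entirely to the elementary extended-cone computations above.
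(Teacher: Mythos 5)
Your proof is correct, but it takes a genuinely different route from the paper's. The paper dispenses with the lemma in one line via a rigidity argument: if $\plC_\infty$ contained a bounded edge of finite length, then uniformly rescaling the bounded edges would produce a one-parameter deformation of the tropical map, contradicting that $[f_\eta]$ was chosen rigid. You instead compute the fiber of $\overline{\plC}\to\overline\rho$ over $\infty$ cone by cone, identifying the extended edge cone $\overline\sigma_e\cong\Rb_\infty^2$ and observing that the fiber of $(x,y)\mapsto x+y$ over $\infty$ is $(\{\infty\}\times\Rb_\infty)\cup(\Rb_\infty\times\{\infty\})$, which is precisely $\Rb_\infty\sqcup_\infty\Rb_\infty$; the marked-end cones are handled similarly. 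The key input in your argument -- that every bounded-edge length $\ell(e)\in S_\rho\cong\NN$ is a \emph{strictly positive} multiple of the generator, so that $\ell(e)(\eta)\to\infty$ as $\eta\to\infty$ -- is a property of the combinatorial type attached to the ray $\rho$ (a contracted edge is not an edge of the type), and this is where the one-dimensionality of $\rho$ (itself a consequence of rigidity) is used; the paper applies rigidity directly instead. What your approach buys is a concrete identification of $\plC_\infty$ as an explicit fiber of an extended cone complex rather than a proof by contradiction; it also makes transparent \emph{why} the glued pieces at a node assemble into the singular metric space in the statement, something the paper's one-liner leaves implicit. The paper's approach buys brevity and highlights rigidity as the operative mechanism, which is the theme of the section. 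One small caveat: the phrase ``so that neither vertex ray is vertical over $\rho$'' is slightly misleading, since vertex rays always project isomorphically onto $\rho$; the real content of the positivity of $\ell(e)$ is that $\sigma_e$ is genuinely two-dimensional with the two vertex rays as its distinct extremal rays, which is what lets you normalize the projection to $(x,y)\mapsto x+y$.
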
 

\begin{proof}
If $\plC_\infty$ contained bounded edges of finite length, then by uniformly scaling all bounded edges, we produce a one-parameter deformation of the tropical map, which cannot exist by the rigidity hypothesis. 
\end{proof}

\noindent
{\bf Moduli at the extended face.} Fix a single tropical map $f_\infty: \plC_\infty\to \Sigma_\infty$. Consider the moduli space $\mathsf{ACGS}_\rho(\Sigma_\infty)$ parameterizing tropical stable maps 
\[
g_\infty: \plD\to\Sigma_\infty
\]
from nodal tropical curves to $\Sigma_\infty$, that (1) are deformations of a fixed map $f_\infty: \plC_\infty\to \Sigma_\infty$, and (2) come equipped with a uniform rescaling to $[f_\infty]$. Such maps are parameterized by a stack over the category of cones. Forgetting the rescaling information, we obtain a cover of the infinite face of $\mathsf{ACGS}_\Gamma(\Sigma)$ that is dual to $\rho$, as in Equation~(\ref{eq: rational-equivalence}). 


\subsection{The cutting morphism} The special fiber $\Sigma_\infty$ is glued from infinite faces. Each ray of the target $\Sigma$ determines a single extended face that is the canonical compactification of a cone complex. The special fiber of $\overline \Sigma$ is glued from these extended faces:
\[
\Sigma_\infty = \bigcup_v \overline \Sigma_{v} 
\]
where $v$ runs over the vertices. Given a map
\[
\plC\to \Sigma_\infty,
\]
cut the target $\Sigma_\infty$ into the pieces as above. For any map parameterized by a point of the moduli space $\mathsf{ACGS}_\Gamma(\Sigma_\infty)$, we correspondingly split the curve into components, at its infinite bounded edges. After deleting the extended faces, we obtain maps $\{\plC_v\to \Sigma_{v}\}_v$. Note that these tropical curves can in principle be disconnected. This produces a cutting morphism
\[
\mathsf{ACGS}_\rho(\Sigma_\infty)\to \prod_v \mathsf{ACGS}_{\rho}(\Sigma_{v}). 
\]
where $\mathsf{ACGS}_{\rho}(\Sigma_{v})$ is the moduli stack of tropical maps to the cone complex $\Sigma_v$ with the discrete data induced by $\rho$, by the star at $v$. The star at $v$ can be identified in an elementary fashion the fan of outwards directions at $v$ in the the polyhedral complex obtained by taking the fiber of $\Sigma\to \RR_{\geq 0}$ over the point $1$. It corresponds to an irreducible component in the special fiber of an associated geometric degeneration. 

\subsection{Modifying the product}\label{sec: modify-product} The stacks of maps $\mathsf{ACGS}_{\rho}(\Sigma_{v})$ can be used to describe the stack $\mathsf{ACGS}_\rho(\Sigma_\infty)$, at least up to modifications. The product has the following universal diagram.
\begin{equation}\label{eq: product}
\begin{tikzcd}
\bigcup_v\plC_v\arrow{rr}\arrow{dr} & & \Kbar\times \bigcup_v \Sigma_{v}\arrow{dl} \arrow {r} &  \bigcup_v \Sigma_{v}\\
&\Kbar: = \prod_v \mathsf{ACGS}_{\rho}(\Sigma_{v}) & &
\end{tikzcd}
\end{equation}
Let $G_\rho$ be the graph given by the source of the map $[f_\infty]$. Each edge $e$ of $G_\rho$ has vertices $v_1$ and $v_2$ and determines a half-edge incident to each. In the component $\Sigma_{v_1}$ containing $v_1$, the point at infinity of this half edge $e_1$ maps to a closed boundary stratum which we denote $\Sigma_e$, noting that the other half edge maps to the same stratum. This fits into an evaluation morphism
\[
\mathsf{ev}_{e}: \prod_v \mathsf{ACGS}_{\rho}(\Sigma_{v})\to \Sigma_e^2. 
\]
Note that in order to define this evaluation, we pass to the relative extended tropicalization in the universal curve, adding points at infinity, and inspect their images in $\Sigma_e$. This reflects the fact that these boundary evaluation morphisms are not generally logarithmic when restricting the logarithmic structure on the target to a stratum, but become logarithmic when the generic logarithmic structure along the divisor is removed.

\begin{remark} The naive strategy would be to now range over all the edges, pulling back the appropriate diagonal loci to obtain maps from collections of curves that glue to form a map to the target. However, the diagonal loci do not impose transverse conditions, and when this procedure is applied to moduli points in extended faces of the product, there is no natural way in which to smooth out the singular tropical curve. This reflects the geometric situation that the evaluations may map to small strata. We subdivide the target over the moduli space until the evaluations always map to vertices, or geometrically to top dimensional strata. This ensures that the gluing always imposes the expected number of geometric conditions, and smoothability follows.
\end{remark}

The required moduli space modifying the product is a twist on Theorem~\ref{thm: main-comb-thm}, but we first discuss the necessity of the modification in greater detail. At first approximation, we must expand each tropical target for the maps $\plC_v\to \Sigma_v$, which can be achieved by subdividing each factor as in the first part. Such a modification does not in immediately suffice, however. Such expansions of two targets are independent of each other, and there is no guarantee that the resulting expansions can be glued together. This is exemplified in Figure~\ref{fig: product-modification}. 

\begin{example} Let $X$ be the mirror dual to $\PP^1\times \PP^1$, whose fan is the fan over the faces of the square $(\pm 1,\pm 1)$. Consider the family of polyhedral complexes in $\RR^2$ given at time $t\in \RR_{\geq 0}$ by a bounded cell -- the square $(\pm t,\pm t)$ -- and rays beginning at the four points of this cell, with directions $(\pm 1,\pm 1)$ as depicted in the figure. This polyhedral complex forms a family $\Sigma\to \RR_{\geq 0}$. The toric degeneration described by this family degenerates $X$ to a union of $4$ copies of $\PP^2$.

The associated tropical degeneration consists of $4$ copies of $\PP^2_{\trop}$ glued along faces, as shown in Figure~\ref{fig: product-modification}. In understanding this picture, Warning~
\ref{warning: many-bijections} is best kept in mind. The picture depicts $4$ partially compactified vector spaces, whose origins are the four points of the square drawn. The cross consists of $4$ copies of $\RR\sqcup\{\infty\}$, glued at the infinite points. The points where the dotted arrows meet the cross are the origins of compactified vector spaces $\RR\sqcup\{\infty\}$. 

In the figure, the dashed arrows in the northwest and southwest quadrants are a non-transverse tropical curves, corresponding to a point in the product. These two target components must be modified in order to maintain transversality. However, when an infinite edge is added to the interior, it forces a subdivision at infinity. In this case, the blue dots on the cross itself show how subdivisions in the interior force subdivisions at infinity. 
\begin{figure}[h!]
\includegraphics[scale=0.23]{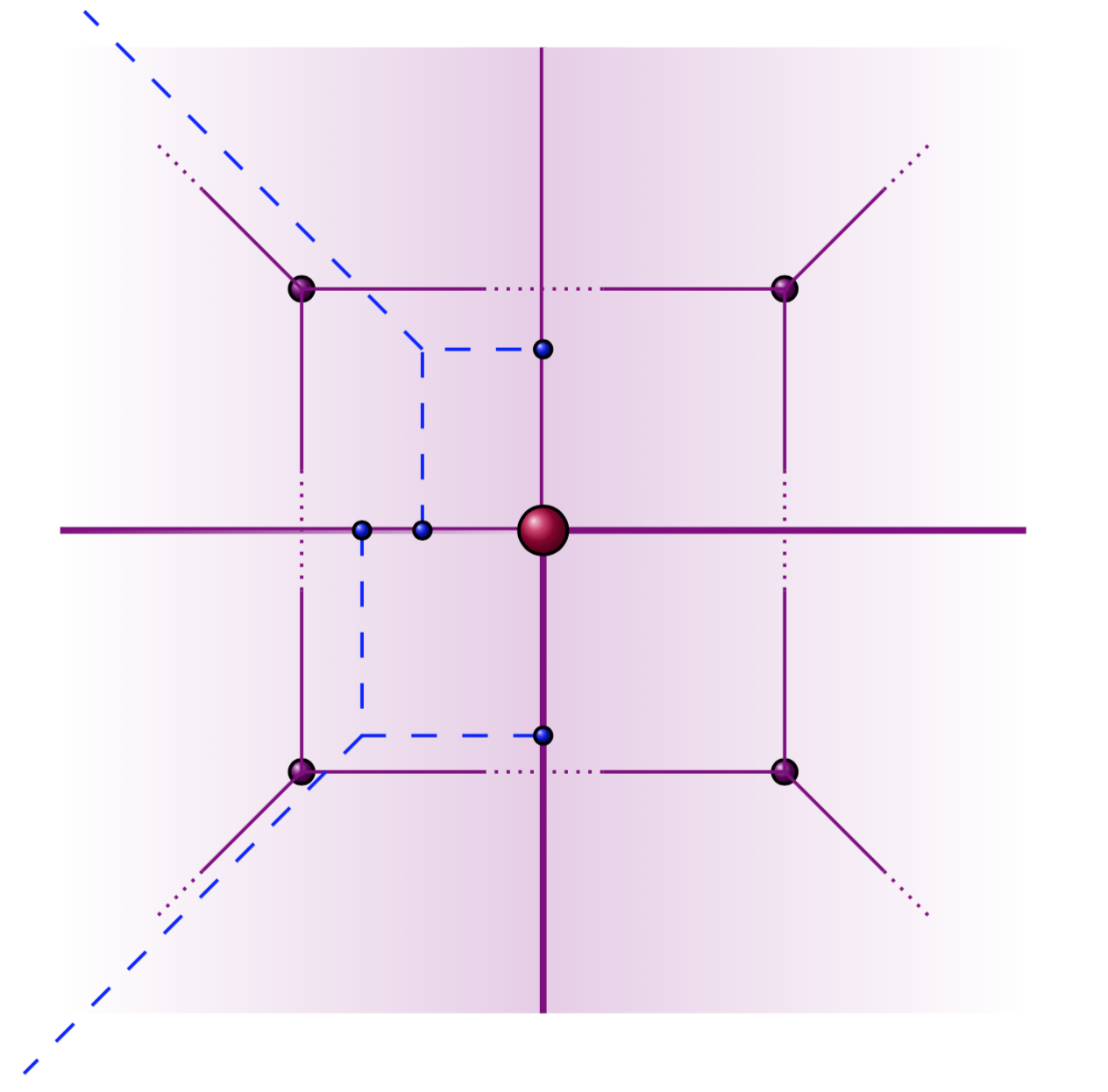}
\caption{Depicted here is the extended tropicalization of four copies of $\PP^2$ glued along their common boundaries. The axes surrounding the center are boundary curves along which the surfaces meet, while the four smaller vertices in the interior are dual to the components themselves. Two dashed tropical curves are drawn, indicating how transversality in one component forces subdivisions of another.}\label{fig: product-modification}
\end{figure}

The figure above indicates the general strategy. Subdividing the interior to accommodate an infinite edge of the curve forces a subdivision at infinity in that component. We choose a matching subdivision on the other side of the cross. Since marked points in our spaces always evaluate to the interiors of pairwise intersections of irreducible components, this can always be done. To make the appropriate construction, recall that we have inclusions of the infinite faces
\[
 \iota: \bigcup_v \overline \Sigma_{v}\to \Sigma_\infty \hookrightarrow \overline \Sigma.  
\]
Composing the universal map over $\prod_v \mathsf{ACGS}_{\rho}(\Sigma_{v})$ with the morphism $\iota$ we think of the universal target\footnote{In fact, we only use the partial compactification of $\Sigma$ determined by deleting the extended faces corresponding to cones of $\Sigma$ of dimension $2$ and higher.} as being a constant family of $\overline \Sigma$.
\end{example}

The required variation of Theorem~\ref{thm: main-comb-thm} in the first part is as follows.


\begin{theorem}\label{thm: var-on-main-comb-thm}
There exists a modification 
\[
\Kbar^\lambda={\bigtimes_v}^\lambda \Kbar_{\rho}(\Sigma_{v}) \to \prod_v \mathsf{ACGS}_{\rho}(\Sigma_{v})
\]
of the product, and modifications of its source and target family, and a moduli diagram
\[
\begin{tikzcd}
\bigcup_v\plC^{\lambda}_v\arrow{rr}\arrow{dr} & & \overline\Sigma_{\mathrm{exp}}^\lambda \arrow{dl} \arrow {r} &  \overline \Sigma\\
&\Kbar^\lambda & &
\end{tikzcd}
\]
with the following properties.
\begin{enumerate}[(1)]
\item {\bf Gluability.} The extended cone complex $\overline \Sigma^\lambda_{\mathrm{exp}}$ is the canonical compactification of a cone complex $\Sigma^\lambda_{\mathrm{exp}}$ that is combinatorially equidimensional over $\mathsf K^\lambda$. 
\item {\bf Curve family.} The universal curves $\plC^\lambda_v$ are each a family of tropical curves over the base $\Kbar^\lambda$. 
\item {\bf Degenerate transversality.} Over a point $t$ of $\Kbar^\lambda$ and for each $v$, the universal curve $\plC_v[t]$ maps to the relative interior of an extended face $\Sigma_{\mathrm{exp},v}[t]$, and is combinatorially transverse: the image of every polyhedron of the curve is a polyhedron of the target.
\end{enumerate}
\end{theorem}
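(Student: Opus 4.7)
The plan is to run the weak semistable reduction scheme underlying Theorem~\ref{thm: main-comb-thm} on the product $\prod_v \mathsf{ACGS}_\rho(\Sigma_v)$, with an additional compatibility requirement along the shared infinite faces. Factor by factor, I would first apply the Abramovich--Karu construction to each universal diagram $\bplC_v \to \mathsf{ACGS}_\rho(\Sigma_v)\times \Sigma_v$: choose a piecewise linear function whose bending locus subdivides $\Sigma_v$ so that every cone of $\bplC_v$ maps to a union of cones of the subdivision, and then pull back this subdivision to the curve. This produces an individual transverse diagram over each factor, but the subdivisions induced on a shared boundary face $\Sigma_e$ by its two ambient components $\Sigma_{v_1}$ and $\Sigma_{v_2}$ will in general be incompatible, so the component targets do not immediately glue into a single cone complex.

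To achieve the gluability required by (1), I would, for each edge $e$ of the graph $G_\rho$, take the common refinement of all subdivisions induced on the face $\Sigma_e$ by the ambient components, and then propagate this refinement back into the interior of each adjacent $\Sigma_v$, for instance by a star subdivision about an interior vertex or, equivalently, by taking the product of the existing interior subdivision with the new boundary subdivision along the cone over $\Sigma_e$. The resulting refined interiors may again fail to be transverse to their universal curves, so the previous step is repeated. Finiteness of combinatorial types, enforced by the fixed finite partially ordered set $L$ in the discrete data $\Gamma$, guarantees that this alternation terminates after finitely many rounds in a configuration where every interior is transverse to its universal curve and the induced boundary subdivisions agree. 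At that point the pieces $\Sigma_v^\lambda$ assemble into a single cone complex $\Sigma_{\mathrm{exp}}^\lambda$ whose canonical compactification is the desired $\overline{\Sigma}_{\mathrm{exp}}^\lambda \to \overline{\Sigma}$.

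With the target in place, the base and the curves are handled exactly as in Theorem~\ref{thm: main-comb-thm}. I would pull back the subdivision of $\Sigma_{\mathrm{exp}}^\lambda$ through each map $F_v$ to produce $\bplC_v^\lambda$, and then invoke~\cite[Lemma~4.3 \& Proposition~4.4]{AK00} to subdivide $\prod_v \mathsf{ACGS}_\rho(\Sigma_v)$ into $\Kbar^\lambda$ so that the image of every cone of the target and of each curve factor is a cone of the base, yielding combinatorial equidimensionality and thus (2). Property (3) is then automatic from the bending-locus construction: the image of every vertex of each $\plC_v^\lambda[t]$ is a vertex of the extended face $\Sigma_{\mathrm{exp},v}[t]$ and the image of every edge is an edge, while the rigidity of the tropical map corresponding to $\rho$ forces each $\plC_v^\lambda[t]$ to lie in the relative interior of the appropriate extended face.

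The principal difficulty I anticipate is the iterative harmonization of boundary subdivisions in the second step. A priori each inward propagation creates new combinatorial types of tropical maps, which in turn demand fresh transversality adjustments, threatening an infinite regress. The argument against regress is that every iteration strictly refines the previous subdivisions while all refinements remain constrained to the finite poset of combinatorial types compatible with the fixed numerical data, so the process stabilizes. A conceptually cleaner alternative, which I would pursue in writing, is to avoid the iteration entirely by applying the Abramovich--Karu piecewise linear function construction once on the whole extended target $\overline{\Sigma}$, thereby treating the gluing along infinite faces as built-in combinatorial structure from the outset; the resulting single piecewise linear function then restricts compatibly to each component $\overline{\Sigma}_v$, and its bending locus simultaneously produces a matching subdivision of all components.
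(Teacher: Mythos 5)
Your ``conceptually cleaner alternative'' at the end of the proposal is essentially the route the paper takes; your primary argument, however, has a genuine gap in the termination step, and the alternative as you state it is not quite well-posed.

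On the termination argument: you invoke the finite poset $L$ of combinatorial types as the mechanism that stops the iteration. This does not work. $L$ bounds the number of combinatorial types of tropical maps, not the inverse system of subdivisions of the target cone complex, which is infinite. One can strictly refine a fan forever without ever changing the combinatorial type of any map to it, so ``strictly refine while constrained to a finite poset'' is not the right invariant. Separately, the propagation step itself is underspecified: given an extant subdivision of the interior of $\Sigma_v$ and a required refinement along a boundary face $\Sigma_e$, the resulting interior subdivision must be compatible with the interior subdivision you already made, and it may project nontrivially onto the other boundary faces of $\Sigma_v$, which then cascades to the adjacent components. Your proposal asserts, but does not argue, that this cascade settles down; star subdivision ``about an interior vertex'' or ``taking the product along the cone over $\Sigma_e$'' are two suggestions of how to do one round, but neither is accompanied by a monotonicity or potential-function argument.

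On the alternative: ``apply the Abramovich--Karu piecewise-linear function construction once on $\overline\Sigma$'' is not a well-defined operation, because the AK machinery operates on cone complexes, not extended cone complexes (a PL function on $\overline\Sigma$ would have to take the value $\infty$ along boundary faces). What the paper actually does is different and avoids both of these problems. It observes that, since stellar subdivisions are cofinal, it suffices to realize each desired stellar subdivision of a cone $\tau$ in $F_v\times\mathsf{PROD}$. Each such $\tau$ determines a stratum of $\overline F_v\times\overline{\mathsf{PROD}}$, hence of the ambient $\overline\Sigma\times\overline{\mathsf{PROD}}$, and hence a cone $\tau^{\mathrm{lift}}$ in the \emph{interior} $\Sigma\times\mathsf{PROD}$ with $\tau$ a face at infinity. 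After arranging $\tau^{\mathrm{lift}}$ simplicial, the stellar subdivision of $\tau^{\mathrm{lift}}$ induces, upon canonical compactification, the wanted stellar subdivision of $\tau$, and simultaneously induces compatible subdivisions on every other boundary face at once. So the construction is not an iteration over faces followed by reconciliation; it is a single pass of interior subdivisions whose boundary traces are automatically coherent. The remaining flattening of source, target and base is then exactly as in Theorem~\ref{thm: main-comb-thm}, and your last paragraph about pulling back to $\bplC_v^\lambda$ and invoking~\cite[Lemma~4.3 \& Proposition~4.4]{AK00} is consistent with that step.
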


\begin{proof}
We focus first on the final requirement, demanding that each curve maps to the relative interior of a maximal infinite face of $\overline \Sigma$, but is combinatorially transverse within that stratum. Fix a vertex $v$ and consider the map $\plC_v$ to the relative interior of an infinite face $F_v$ of $\Sigma_\infty$. Introduce the notation 
\[
\mathsf{PROD} = \prod_v \mathsf{ACGS}_{\rho}(\Sigma_{v})
\] 
to denote the product of moduli spaces attached to the vertices. To ensure transversality of $\plC_v$, we take $F_v\times\mathsf{PROD}$ and make subdivisions of the total space, later modifying the curve and base to ensure the requisite conditions. For any fan, the collection of star subdivisions is cofinal in the inverse system of all subdivisions, so we may assume that transversality for $\plC_v$ is achieved by making a sequence of star subdivisions of $F_v\times\mathsf{PROD}$. However, as outlined in the aforementioned example, the subdivisions of $F_v$ may force subdivisions of other extended faces $F_u$. To see that a consistent choice can be made to ensured while maintaining gluability of the target, we instead make the subdivisions of the total space of the degeneration $\Sigma$, that induce the given subdivision of $F_v$ upon passing to canonical compactifications.  

Given a cone $\tau$ of the fan $F_v\times\mathsf{PROD}$ that we wish to perform a star subdivision of, observe that it determines a stratum of the canonical compactification $\overline F_v\times\overline{\mathsf{PROD}}$. This is also a stratum of the larger compactification $\overline \Sigma\times\overline {\mathsf{PROD}}$ and thus determines a cone $\tau^{\mathrm{lift}}$ on the interior $\Sigma\times\mathsf{PROD}$. The cone $\tau$ is a cone contained in a face at infinity of $\tau^{\mathrm{lift}}$. Further subdivisions do not affect the question at this stage, so assume that $\tau^{\mathrm{lift}}$ is a simplicial cone inside a simplicial fan $\Sigma\times\mathsf{PROD}$. Perform the star subdivision of $\tau^{\mathrm{lift}}$.  Now passing to its canonical compactification, the induced subdivision on $\overline F_v\times\overline{\mathsf{PROD}}$ is the star subdivision of  $\overline F_v\times\overline{\mathsf{PROD}}$. This is the extended face corresponding to the strict transform. By performing a sequence of these star subdivisions, we obtain a family of expansions of the faces at infinity, expanded compatibly, as well as a family of expansions of the tropical target $\Sigma$. 

We now proceed as in Theorem~\ref{thm: main-comb-thm}. Pull back the resulting expansions of the faces $F_v\times\mathsf{PROD}$ to the curves $\plC_v$, flatten the source and target families, and obtain the modification
\[
{\bigtimes_v}^\lambda \Kbar_{\rho}(\Sigma_{v}) \to \prod_v \mathsf{ACGS}_{\rho}(\Sigma_{v})
\]
with the requisite transversality properties.
\end{proof}

\begin{remark}
When the boundary is a single smooth divisor, the divisor itself does not admit nontrivial modifications, as its tropicalization is a single point. As a result, the birational modification of the product is not necessary, and this accounts for the lack of modification of the product in traditionally considered cases~\cite{CheDegForm,KLR,Li02}.
\end{remark}

As with the construction of the spaces in the previous part of the paper, $\lambda$ records a choice, and there is an inverse system of such moduli spaces obtained by common refinement. 

\subsection{Evaluations and tropical gluing} The next two steps are not strictly necessary for the geometric arguments, but are combinatorial analogues of key steps in the next section. We first glue maps to form maps out of nodal tropical curves, and then smooth them to maps from ordinary tropical curves.

Fix the rigid tropical type, and consider maps that are deformations of
\[
f_\infty:\plC_\infty\to \Sigma_\infty,
\]
equipped with a fixed contraction to this map. Let $G$ be the graph formed by considering the vertices and infinite bounded edges of $\plC_\infty$. Previously, we constructed a cutting morphism
\[
\mathsf{ACGS}_\rho(\Sigma_\infty)\to \prod_v \mathsf{ACGS}_\rho(\Sigma_v).
\]
We also built a modification of the product with a transverse disconnected universal curve. The cutting morphism from $\mathsf{ACGS}_\rho(\Sigma_\infty)$ does not immediately lift to the modified product. However, we may subdivide $\mathsf{ACGS}_\rho(\Sigma_\infty)$ and its universal curve and target to obtain a space of transverse maps $\Kbar^\lambda_\rho(\Sigma_\infty)$ and a new cutting morphism
\[
\kappa: \Kbar^\lambda_\rho(\Sigma_\infty)\to {\bigtimes_v}^\lambda \Kbar_\rho(\Sigma_v).
\]
We wish now to identify the image of this map. For a morphism to the product to come from a morphism to $\Sigma_\infty$, two conditions are apparent:
\begin{enumerate}[(A)]
\item {\bf Continuity.} For each edge $e$ of $G$ between $v_1$ and $v_2$, the images of the two markings incident to $v_1$ and $v_2$ comprising $e$ must coincide. 
\item {\bf Smoothability.} The construction of $\Kbar^\lambda_\rho(\Sigma_\infty)$ as a component of the special fiber of a tropical degeneration of maps, demands that any map must arise as the limit of a $\RR_{> \eta}$-family of maps to $\Sigma$. 
\end{enumerate}

Over the modified product moduli space we have an expanded target family $\Sigma^\lambda_{\mathrm{exp}}$. An edge $e$ in $G$ determines a choice of codimension $2$ stratum in the extended faces of this expanded target family. The infinite point of a marked end of the curve $\plC_v$ containing one end of $e$ has an evaluation morphism to the relative interior of the corresponding stratum of the target. We denote this relative divisor by $\mathrm{D}_e(\Sigma^\lambda_{\mathrm{exp}})$. Since there are two vertices at the ends of each edge $e$, we have an evaluation morphism
\[
\mathsf{ev_e}: {\bigtimes_v}^\lambda \Kbar_\rho(\Sigma_v)\to \mathrm{D}^{\{2\}}_e(\Sigma^\lambda_{\mathrm{exp}}).
\]
The symbol on the right is the fibered second power of $\mathrm{D}_e(\Sigma^\lambda_{\mathrm{exp}})$ over the moduli space ${\bigtimes_v^\lambda} \Kbar_\rho(\Sigma_v)$. The subdivision has been chosen such that the image of a boundary marking is always a vertex of  $D_e(\Sigma^\lambda_{\mathrm{exp}})$. We have a relative diagonal morphism
\[
\Delta_e: \mathrm{D}_e(\Sigma^\lambda_{\mathrm{exp}})\hookrightarrow \mathrm{D}^{\{2\}}_e(\Sigma^\lambda_{\mathrm{exp}}).
\]
Note that in each fiber over the moduli space, the image of either evaluation morphism attached $e$ is a {vertex} of the divisor. The intersection of the image of $\Delta_e$ and the image of the map
\[
\mathsf{ev_e}: {\bigtimes_v}^\lambda \Kbar_\rho(\Sigma_v)\to \mathrm{D}^{\{2\}}_e(\Sigma^\lambda_{\mathrm{exp}}).
\]
is a subcomplex of $\mathrm{D}_e(\Sigma^\lambda_{\mathrm{exp}})$. By ranging over all edges, we obtain a tropical moduli space\footnote{In full analogy with the geometric situation, this intersection of diagonals a stand in for the refined intersection product for regular embeddings, in the sense defined by Fulton.}
\[
\mathcal G^\lambda(\Sigma_\infty) = \bigcap_e \mathrm{im}(\Delta_e)\cap {\bigtimes_v}^\lambda \Kbar_\rho(\Sigma_v).
\]
These are tropical maps to expansions of $\Sigma_\infty$ that are transverse to the target and whose evaluation morphisms agree. 

\subsection{Tropical smoothing}\label{sec: trop-smoothing} A final step remains to complete a tropical degeneration formula -- the singular tropical curves constructed by gluing must be smoothed out. This step reflects the geometric phenomenon of having to build logarithmic enhancements of glued maps, which we discuss in the next section. In both steps, transversality of the curve is paramount. 

Given a $p$ point of $\mathcal G^\lambda(\Sigma_\infty)$, we obtain a map
\[
g_\infty: \plD_\infty\to \Sigma^\lambda_\infty.
\]
We wish to smooth it out to a family $\plD_\eta\to \Sigma_\eta$ for $\eta\in \RR_{\geq 0}$, for $\eta$ sufficiently large, whose limit is the map $[g_\infty]$. This involves ``shrinking'' the infinite edge lengths of $\plD_\eta$ to produce a continuous tropical map to $\Sigma_\eta$. 

Consider a nodal edge $e$ of $\plD_\infty$. Its image under $g_\infty$ is an infinitely long edge of $\Sigma_\infty[p]$. However, over the point $p$, we have a canonical inclusion
\[
\Sigma_\infty[p]\hookrightarrow \overline \Sigma^\lambda_{\mathrm{exp}}[p]\subset \overline\Sigma_{\mathrm{exp}}^\lambda
\]
Smoothing out $g_\infty$ now amounts to resetting the infinite edge of $\plD_\infty$ lengths to finite values varying linearly in $\eta$, while maintaining continuity.

Given a nodal edge $e$ of the curve, the image $g_\infty(e)$ is a nodal edge of the target. Over $\eta$, this nodal edge of the target determines a unique bounded edge $\Sigma_\eta[p]$. This edge has length $\ell_e$. Let $m_e$ denote the expansion factor on the edge $e$ of $\plD_\infty$. Reset the edge length of $\plD_\eta$ to be $\ell_e/m_e$. Consider the resulting map
\[
\plD_\eta\to \Sigma_1[p].
\]
We claim that it is continuous and piecewise linear. To see this, observe that each edge of $\plD_\eta$ maps onto an edge of $\Sigma_1[p]$ by a linear map with slope equal to the expansion factor $m_e$. When two edges that meet at a vertex of $\plD_\eta$, that vertex is mapped to the corresponding vertex in $\Sigma_1[p]$ by construction. This implies continuity and piecewise linearity. In particular, we conclude that for any appropriate choice of $\lambda$, the morphism
\[
\Kbar^\lambda_\rho(\Sigma_\infty)\to \mathcal G^\lambda(\Sigma_\infty)
\]
is a piecewise linear homeomorphism.


\section{The virtual gluing formula}

In this final section, we prove the degeneration formula. We pull back the subdivisions constructed in the previous section to the moduli spaces of maps to the Artin fan, and then to the degeneration. Two additional ingredients yield the gluing formula, both of which are reminiscent of the traditional case~\cite[Section~7]{CheDegForm}. The first is the calculation of the number of logarithmic lifts of a glued, transverse map. The second is a comparison of obstruction theories. 

\subsection{The degeneration setup} Let $\mathscr Y\to \A^1$ be a toroidal degeneration. There exists a stack of logarithmic stable maps to the fibers of the degeneration $\mathsf{ACGS}_\Gamma(\mathscr Y)$. Let $Y_\eta$ be the general fiber and $Y_0$ the special fiber, equipped with its logarithmic structure. The moduli space comes equipped with a structure map 
\[
\mathsf{ACGS}_\Gamma(\mathscr Y)\to \A^1.
\]
The rays of the tropical moduli stack $\mathsf{ACGS}_\Gamma(\Sigma_{\mathscr Y})$ discussed in the previous section are indexed by finitely many \textbf{rigid tropical maps}. These determine divisors in the space of maps to the Artin fan, and hence virtual divisors of the stack above. We freely use the twofold interpretation of a ray $\rho$: as a ray in the tropical moduli space, and as a combinatorial type of a rigid tropical map. Note that this combinatorial type includes a finite marked graph, genus splitting, degree splitting, and contact order data~\cite[Definition~2.5.3]{ACGS15}. 

In the previous section we made the assumption that each rigid tropical map is supported on a subcomplex of $\Sigma_{\mathscr Y}$. The parallel assumption is made here. 

\begin{assumption}{For each rigid tropical map $\rho$, we assume that the virtual codimension $0$ strata of the component with type $\rho$ correspond to maps
\[
C\to Y_0
\]
that are transverse to the strata of $Y_0$.} 
\end{assumption}

This can always be done after replacing $\mathscr Y$ with a toroidal modification such that the rigid tropical maps are transverse to the tropicalization of the degeneration. Indeed, the image of each rigid tropical map gives rise to a polyhedral complex in the dual complex obtained as the fiber over $1$ in the tropical family of targets, denoted $\Sigma_{\mathscr Y}[1]$. After a polyhedral subdivision of $\Sigma_{\mathscr Y}[1]$, this rigid tropical map can be made transverse -- i.e. such that the image of every vertex (resp. edge) is a vertex (resp. edge) of this subdivided complex. This induces a logarithmic modification of $Y_0$, which achieves the requisite transversality claim. See the parallel Assumption~\ref{assump: geneirc-transverse-trop} in the previous section. 

\begin{remark}
Note that when $\mathscr Y$ is replaced with a logarithmic modification $\mathscr Y'$, the virtual modification of the moduli space $\mathsf{ACGS}_\Gamma(\mathscr Y')$ will acquire new rigid tropical curves. However, these new exceptional virtual divisors will not contribute to the Gromov--Witten theory numerically. Since any integral of interest will be pulled back from from $\mathsf{ACGS}(\mathscr Y)$, the projection formula will guarantee that these new virtual divisors will be zeroed out. Of course, if insertions are not pulled back, this ceases to be true. Nevertheless, once the numerical data is fixed, there are only finitely many rigid curves to sum over. We may always perform a logarithmic modification to $\mathscr Y$ to ensure that transversality holds for maps with each of these fixed rigid types. 
This is essentially the strategy employed in~\cite{NS06}, though in that case the rigid tropical curves are  zero dimensional. 
\end{remark}

The \textit{decomposition theorem} of Abramovich--Chen--Gross--Siebert equates the virtual class of maps to the special fiber with a sum of pieces,
\[
[\mathsf{ACGS}(Y_\eta)] \ = \sum_{i} \frac{m_{\rho_i}}{\mathsf{Aut}(\rho_i)} [\mathsf{ACGS}_{\rho_i}(Y_0)] \in A_\star(\mathsf{ACGS}(\mathscr Y)),
\]
in parallel with Equation~\ref{eq: rational-equivalence}. The space $\mathsf{ACGS}_{\rho_i}(Y_0)$ is a moduli space of logarithmic stable maps to $Y_0$, equipped with a uniform rescaling to the fixed rigid type $\rho_i$. See~\cite[Theorem~1.1.2]{ACGS15}. 

\subsection{The generic case: transverse gluing} Fix a rigid tropical curve $\rho$ with graph $G$. Write $Y_0$ as a union of irreducible components $X_{v}$ for $v$ a vertex of $G$. Our goal is now to describe the moduli space $\mathsf{ACGS}_{\rho}(Y_0)$, up to toroidal modifications, in terms of simpler pieces. Over the loci of generic curves in the Artin fan (i.e. over the vertex in the tropical moduli space), this is straightforward.  The (possibly empty) open stratum 
\[
\mathsf{ACGS}^\circ_{\rho}(Y_0)\subset \mathsf{ACGS}_{\rho}(Y_0)
\] 
of maps 
\[
C\to Y_0
\]
whose tropicalization is exactly $\rho$ is exactly this locus of generic maps. Recall we are assuming by modifications that such generic curves are supported away from higher codimension strata in $Y_0$. Consider the target normalization
\[
\bigcup X_{v}\to Y_0. 
\]
We therefore obtain a cutting morphism
\[
\kappa^\circ: \mathsf{ACGS}^\circ_{\rho}(Y_0)\to \prod_v \mathsf{ACGS}_{\rho}^\circ(X_{v}). 
\]
For every edge $e$ of $G$, there are a pair of scheme theoretic evaluation morphisms at the corresponding marked points, giving us
\[
\mathsf{ev}: \prod_v \mathsf{ACGS}_{\rho}^\circ(X_{v})\to \prod_e (D_e^2)^\circ,
\]
where the genericity assumption tells us that the scheme theoretic evaluations all map to the dense open stratum in each divisor. The diagonal copy of $\prod_e D_e^\circ$ in the square is a regular embedding since the interiors of these divisors are smooth. The scheme theoretic fiber product gives rise to a new space
\[
\begin{tikzcd}
\mathcal G^\circ \arrow{r}\arrow{d} & \prod_v \mathsf{ACGS}_{\rho}^\circ(X_{v})\arrow{d}{\mathsf{ev}} \\
\prod_e D_e^\circ\arrow[swap]{r}{\Delta^\circ} & \prod_e (D_e^2)^\circ.
\end{tikzcd}
\]
The constructions above yield
\[
\kappa^\circ\times \mathsf{ev}: \mathsf{ACGS}^\circ_\rho(Y_0)\to \mathcal G^\circ.
\]
This morphism is finite and \'etale. A moduli point of $\mathcal G^\circ$ is a logarithmic map from a disjoint union of curves to the components of $Y_0$ that scheme theoretically glue. As the curve is transverse to the strata, the scheme theoretic data has a logarithmic enhancement, and in fact, finitely many such enhancements~\cite[Theorem~1.1]{Wis16b}. Since $\Delta^\circ$ is a regular embedding, the stack $\mathcal G^\circ$ also inherits a virtual fundamental class from the product. The virtual class on the space $\mathsf{ACGS}^\circ_\rho(Y_0)$ can be pushed forward to $\mathcal G^\circ$, and the two classes agree on $\mathcal G^\circ$ up to a finite multiple.

\begin{remark}
The moduli spaces above are generally non-proper, and the maps parameterized by deeper strata in their natural compactifications $\mathsf{ACGS}_\rho(Y_0)$ are non-transverse. In certain special situations, invariants can be defined on the non-proper moduli spaces, or be supported on the locus of transverse maps. In such situations, a degeneration formula can be proved. This is the situation in each of the papers~\cite{Bou19, CJMR, GPS, MR16, NS06}. The next section uses expansions to achieve such transversality in general. 
\end{remark}

\subsection{The general case: transversality via expansions}\label{sec: general-gluing} The antecedent paragraphs perform gluing in the locus that is dual to the tropically trivial part of the moduli space. We now extend the picture to the full moduli space. The modified spaces have been engineered in order to ensure that the transversality properties used in the case above are present at every point, so the arguments extend in a natural way.

We cease working with the Abramovich--Chen--Gross--Siebert space and replace $\mathsf{ACGS}_\rho(Y_0)$ with a moduli stack $\Kbar^\lambda_\rho(Y_0)$ over $\spec(\NN\to\CC)$  defined as the stack parameterizing logarithmic stable maps to families of expansions of $Y_0$ with discrete data $\rho$, where $\lambda$ is any of the elements in the inverse system of refinements constructed in Section~\ref{sec: maps-from-subs}. Consider a stable map over $S$
\[
\begin{tikzcd}
C\arrow{dr}\arrow{rr} & & \mathcal Y^\lambda \arrow{r} \arrow{dl} & Y_0\\
&S&
\end{tikzcd}
\]
that is transverse to the strata. Normalize $Y_0$ and observe that each component $X_v$ comes equipped with a collection of divisors. These are either of irreducible components of the normalization with divisors of $Y_0$ or preimages of the double divisors of $Y_0$. Equip each irreducible component with the corresponding divisorial logarithmic structure. Similarly, we partially normalize $\mathcal Y^\lambda$ into pieces $\mathcal X^\lambda_{v}$, noting that each $\mathcal X_v^\lambda$ is a degeneration of the corresponding component of $Y_0$. The curve also partially normalizes to produce, for each $v$, a morphism over $S$
\[
f_i: C_{v}\to \mathcal X^\lambda_{v}\to X_{v},
\]
where the fibers over $S$ of $\mathcal X^\lambda_{v}$ are expansions of $X_v$. The scheme theoretic structure of this map is clear. In fact, it has a natural logarithmic structure. The curve $C_v$ has a logarithmic structure obtained by marking the preimages of nodes as marked points. The scheme $X_{v}$ has its divisorial a logarithmic structure, while the expansion $ \mathcal X^\lambda_{v}$ is a logarithmic modification of the trivial family of $X_{v}$. 

\begin{lemma}
The morphism $f_i$ is logarithmic, with the logarithmic structures given to curve and target as above.
\end{lemma}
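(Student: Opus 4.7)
The plan is to reduce to an \'etale-local check at the (finitely many) points of $C$ where the log structures are nontrivial, namely markings, nodes, and preimages of nodes under partial normalization, and then exploit the transversality hypothesis to recognize the desired log morphism as a restriction of the given log map $f\colon C\to \mathcal Y^\lambda$. Logarithmicity is an \'etale-local condition, so it suffices to produce, \'etale locally, a morphism $f_v^\star \overline M_{\mathcal X_v^\lambda}\to \overline M_{C_v}$ of characteristic sheaves compatible with the map on structure sheaves and then lift it to a genuine log morphism. Transversality is essential here: by Lemma~\ref{thm: transversality} no component of $C$ is contained in a stratum of positive codimension, so each component of $C$ lies over a unique component of $\mathcal Y^\lambda$, and thus partial normalization of the target pulls back compatibly to partial normalization of the source. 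In particular $C_v=f^{-1}(\mathcal X^\lambda_v)$ as a union of components of the normalization.

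The check at smooth unmarked points of $C_v$ and at the original markings of $C$ is automatic, because on $C_v$ the log structure there agrees with the restriction of the one on $C$, while on $\mathcal X_v^\lambda$ the divisorial log structure is the restriction of the one on $\mathcal Y^\lambda$, and $f$ was log by hypothesis. The interesting locus is at the new markings of $C_v$, i.e.\ preimages of nodes of $C$. Choose such a node $q\in C$ lying over the intersection $D_{vw}$ of two components $\mathcal X_v^\lambda$ and $\mathcal X_w^\lambda$. \'Etale locally at $q$ the curve has the standard form $xy=t_{\delta_e}$, and the log map $f$ is determined, on characteristic sheaves, by the images of the two toric coordinates defining the divisor $D_{vw}\subset \mathcal Y^\lambda$: these pull back to $x^{c_e}u$ and $y^{c_e}u'$ for the contact order $c_e$ attached to the edge $e$ of the tropical curve, with $u,u'$ units. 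Partial normalization along the component containing $\mathcal X_v^\lambda$ strips off the $y$-factor and replaces $D_{vw}$ by a smooth divisor in $\mathcal X_v^\lambda$; the restriction of $f$ then factors as $x\mapsto x^{c_e}u$, which is exactly the log morphism sending the generator of $\overline M_{\mathcal X_v^\lambda}$ at $D_{vw}$ to $c_e$ times the generator of $\overline M_{C_v}$ at the new marking. This is coherent with the prescribed log structure on $C_v$.

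It remains to handle the contributions to $M_{\mathcal X_v^\lambda}$ coming from the \emph{expansion divisors} internal to $\mathcal X_v^\lambda$, i.e.\ the divisors created by the logarithmic modification $\mathcal X_v^\lambda\to X_v\times S$. This is the step I expect to be the main obstacle, because these divisors are not directly visible in the original source curve $C$. However, by the construction of $\mathcal Y^\lambda$ in Section~\ref{sec: maps-from-subs}, the expansion is pulled back from the universal tropical modification, so its divisorial contributions to $\overline M_{\mathcal Y^\lambda}$ are the pullback along the tropicalization map of the corresponding rays in $\Sigma^\lambda$. The map $f$, being log, already carries these rays to edges or vertices of $\plC^\lambda$, and partial normalization does not affect this structure since the expansion divisors of $\mathcal X_v^\lambda$ sit in the interior of $\mathcal X_v^\lambda$ away from the double locus where normalization cuts. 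Thus the characteristic morphism for these divisors on $\mathcal X_v^\lambda$ is simply the restriction of the one on $\mathcal Y^\lambda$ to the component in question, and is compatible on $\mathcal O$ by construction.

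Finally, to upgrade from the level of characteristic sheaves to honest log structures, note that $M_{\mathcal X_v^\lambda}$ and $M_{C_v}$ are fine and saturated divisorial log structures, and the local calculations above determine $f_v^\flat$ uniquely up to the units that are already fixed by $f^\flat$ on $C$. Patching these local descriptions yields a well-defined map $f_v^\flat\colon f_v^\star M_{\mathcal X_v^\lambda}\to M_{C_v}$ that agrees with $f^\flat$ on the complement of the new markings, so $f_v$ is a log morphism. Composing with the strict morphism $\mathcal X_v^\lambda\to X_v$ of logarithmic stacks gives the desired logarithmic structure on $f_v\colon C_v\to X_v$.
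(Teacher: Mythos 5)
Your proof is correct and follows essentially the same approach as the paper: away from the preimages of the normalized nodes the log structures are compatible by restriction, and at those new markings transversality lets you pull back the divisorial log structure, obtaining a power of the local parameter with exponent the contact order. The paper's own proof is terser (it does not separately discuss the expansion divisors, which it folds into the "clear" case), but your more explicit local analysis at the node $xy=t_{\delta_e}$ and your treatment of the internal expansion divisors are elaborations rather than a different route.
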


\begin{proof}
The logarithmic structure at the curve $C_v$ and of the map away from the preimages of the normalized nodes is clear. At these marked points the curve is dimensionally transverse to the target, so pull back the divisorial logarithmic structure on $X_v$ to $C_v$. The resulting logarithmic structure on $C_v$ is generated in the neighborhood of a smooth point by a power of the local parameter at the marked point, with exponent equal to the contact order. This local parameter determines an element of the logarithmic structure sheaf of the curve, the identification defines the desired logarithmic structure on the morphism. 
\end{proof}

This procedures gives rise to a cutting morphism
\begin{equation}\label{eq: cutting}
\kappa: \Kbar^\lambda_\rho(Y_0) \to \prod_v \Kbar^\lambda_\rho(X_{v}).
\end{equation}
In order to glue, we apply Theorem~\ref{thm: var-on-main-comb-thm} to first modify the product. As we have done in the first part of the paper, the subdivision
\[
{\bigtimes_v}^\lambda \Kbar_{\rho}(\Sigma_{v}) \to \prod_v \Kbar^\lambda_{\rho}(\Sigma_{v})
\]
can be pulled back to produce a logarithmic modification of stacks
\[
{\bigtimes_v}^\lambda \Kbar_{\rho}(X_{v}) \to \prod_v \Kbar^\lambda_{\rho}(X_{v}). 
\]
The stack ${\bigtimes_v}^\lambda \Kbar_{\rho}(X_{v})$ is equipped with a natural virtual fundamental class, coming from the analogous modified product of maps to the Artin fan. For a cofinal system of choices $\lambda$, we also have a modified cutting morphism 
\[
\kappa: \Kbar^\lambda_\rho(Y_0)\to {\bigtimes_v}^\lambda \Kbar_{\rho}(X_{v}).
\]
We are guilty of a mild abuse of notation. The cutting morphism in Equation~(\ref{eq: cutting}) need not immediately lift to the modified product, but the source $\Kbar^\lambda_\rho(Y_0)$ can be modified to lift the cutting morphism. The modification is formed by the fiber product of the corresponding tropical moduli spaces, and therefore amounts to a different choice of $\lambda$ in the inverse system. Since we have not specified a particular choice of $\lambda$, we avoid overburdening the notation. 

\subsection{Virtual classes} Given a point $p$ of the modified product ${\bigtimes_v^\lambda} \Kbar_{\rho}(X_{v})$, the moduli space associates a disconnected universal curve, a glued target family, and a transverse map from each curve to a partial normalization of the target. Let $\mathcal Y_0^\lambda$ be the fiber of the target family over this chosen point of the product. Let
\[
\mathcal Y_0^{\lambda,\circ}[p]\subset \mathcal Y_0^{\lambda}[p]
\]
be the complement of the union of the closed codimension $2$ strata in $\mathcal Y_0^{\lambda}[p]$. In other words, this interior consists of the irreducible components and their locally closed divisorial boundaries. The codimension $1$ strata of $\mathcal Y_0^{\lambda,\circ}[p]$ are therefore smooth. 

The same works in families. For the expanded target family, the fiberwise complement of the codimension $2$ strata forms a flat family of open targets, and a family of smooth divisors. 

\begin{lemma}
Let $\mathcal Y_0^\lambda\to {\bigtimes_v^\lambda} \Kbar_{\rho}(X_{v})$ be the family of expanded, compatibly glued targets. The union of $Y_0^{\lambda,\circ}[p]$ ranging over $p$ in the base form a flat family. Furthermore, universal map
\[
\bigcup C_v\to \mathcal Y_0^{\lambda,\circ}
\]
is proper. 
\end{lemma}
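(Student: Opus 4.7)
The plan is to reduce both assertions to the flatness of the ambient target family $\mathcal Y_0^\lambda\to {\bigtimes_v^\lambda}\Kbar_\rho(X_v)$ combined with the transversality Lemma~\ref{thm: transversality}. Flatness will be a direct consequence of the construction, while properness will follow from transversality together with the standard fact that a morphism from a proper scheme to a separated scheme over a common base is itself proper.

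For flatness of $\mathcal Y_0^{\lambda,\circ}$, I would first note that the ambient family $\mathcal Y_0^\lambda\to {\bigtimes_v^\lambda}\Kbar_\rho(X_v)$ is flat, being pulled back from the target families on each $\Kbar^\lambda_\rho(X_v)\to \Kbar^\lambda_\Gamma(\mathcal A_X)$, whose flatness was established in Section~\ref{sec: maps-from-subs} via the equidimensionality and reducedness provided by Theorems~\ref{thm: main-comb-thm} and~\ref{thm: var-on-main-comb-thm} together with~\cite[Theorem~2.1.5]{Mol16}. By construction $\mathcal Y_0^{\lambda,\circ}$ is the complement in $\mathcal Y_0^\lambda$ of the closures of the fiberwise codimension $\geq 2$ strata. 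The step I expect to require the most care is verifying that combinatorial equidimensionality forces the cone stratification of the total space to restrict fiberwise to the stratification of each $\mathcal Y_0^\lambda[p]$, so that the excised locus is genuinely closed and its open complement has the expected fibers. Flatness of $\mathcal Y_0^{\lambda,\circ}$ then follows immediately, since flatness is preserved under restriction to open subschemes.

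For properness of $\bigcup C_v \to \mathcal Y_0^{\lambda,\circ}$, I would first establish that the universal map factors through $\mathcal Y_0^{\lambda,\circ}$. Each component map $C_v \to \mathcal X_v^\lambda$ is transverse in the sense of Lemma~\ref{thm: transversality}, so $C_v$ meets the boundary divisors only at marked points (including preimages of glued nodes), and each such marked point maps into the smooth locus of a codimension $1$ stratum. Hence no point of $C_v$ lands in a codimension $\geq 2$ stratum of $\mathcal X_v^\lambda$, and the universal map factors through $\mathcal Y_0^{\lambda,\circ}$ as claimed. Properness now follows from the standard fact that for a $B$-morphism $f:X\to Y$ with $X\to B$ proper and $Y\to B$ separated, $f$ is proper: the graph exhibits $f$ as a closed immersion into the separated fiber product, followed by the base change of a proper map. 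Here $\bigcup C_v$ is proper over the base as a family of logarithmically smooth curves, while $\mathcal Y_0^{\lambda,\circ}$ is separated over the base as an open subscheme of a separated family, so we conclude.
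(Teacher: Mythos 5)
Your proposal is correct and follows essentially the same route as the paper: flatness by reducing to an open subscheme of the already-flat family $\mathcal Y_0^\lambda$, and properness by observing that transversality forces the universal curve to factor through $\mathcal Y_0^{\lambda,\circ}$, after which one invokes the standard proper-source/separated-target fact. The one small difference is that the paper establishes the openness (and hence the flatness) by passing to the tropical picture and taking the union of cones in the image of $\bigcup\plC_v$, which sidesteps the subtlety you rightly flag about whether the fiberwise codimension-$\geq 2$ locus is closed in the total space; your version would need the equidimensionality of Theorem~\ref{thm: var-on-main-comb-thm} to make that closedness precise.
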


\begin{proof}
Switching to the tropical picture, we have a map
\[
\bigcup \plC_v\to \Sigma_\infty.
\]
By the transversality hypothesis, every cone of $\bigcup \plC_v$ surjects onto a cone of the target $\Sigma_\infty$. This gives a union of cones, whose support determines an open subscheme $\mathcal Y_0^{\lambda,\circ}$ of the target family, which is flat. The properness of the universal map also follows by transversality: by design we have built target expansions such that the higher codimension strata are never met by the curve. 
\end{proof}

The smoothness of the divisors is of particular use to us here, and we will refer to them as \textbf{open divisors}. Recall that the edges $e$ of $G$ produce boundary evaluations on the modified product stack:
\[
\mathsf{ev_e}: {\bigtimes_v}^\lambda \Kbar_\rho(X_{v})\to \mathrm{D}^{\{2\}}_e(\mathcal X^\lambda).
\]
The symbol $\mathrm{D}_e(\mathcal X^\lambda)$ denotes the open divisor family supporting the evaluations of marked points coming from $e$. The evaluation above is to its fiber square. Let $\mathcal D_G$ be the fiber product of the open divisor families $\mathrm{D}_e(\mathcal X^\lambda)$, ranging over all nodal edges $e$ of $G$. Let $\mathcal D_G^2$ be its fiber square. We have an evaluation $\mathsf{ev}$ at points on each curve corresponding to the two flags of an edge $e$ in $G$. Packaging these data leads to a fiber square in ordinary schemes
\[
\begin{tikzcd}
\mathcal{G}^\lambda(Y_0)\arrow{r}{\mathrm{split}}\arrow{d} & {\bigtimes_v^\lambda} \Kbar_{\rho}(X_{v}) \arrow{d}{\mathsf{ev}} \\
\mathcal D_G \arrow[swap]{r}{\Delta} & \mathcal D^2_G.
\end{tikzcd}
\]
The fiber product parameterizes collections of logarithmic stable maps to expansions of $X_v$ whose underlying maps glue to form a map to the scheme $Y^\lambda_0$. In particular, the maps have no natural logarithmic structure at the gluing nodes. 

The space $\mathcal{G}^\lambda(Y_0)$ has a natural virtual fundamental class described as follows. Since the open divisors don't meet, it follows that $\mathcal D_G$ is smooth over the base. The morphism $\Delta$ is therefore a regular embedding and defines a perfect obstruction theory given by its normal bundle~\cite[Section~6.2]{Ful93}. We can pull back this obstruction theory to the top row. Define the class $\left[\mathcal{G}^\lambda(Y_0)\right]^{\mathrm{vir}}$ as the virtual fundamental class obtained by virtual pull back of the virtual class on the top right:
\[
\left[\mathcal{G}^\lambda(Y_0)\right]^{\mathrm{vir}} := \Delta^! \left[{\bigtimes_v}^\lambda \Kbar_\rho(X_{v})\right]^{\mathrm{vir}}. 
\] 

This class has a standard description by unpacking the obstruction theory provided by the diagonal, and Manolache's virtual pullback~\cite{Mano12} and a detailed description can be found for instance in~\cite[Section~7.4]{Bou19} or~\cite{KLR}. The upshort is the following. First let $\mathscr A_{v}$ be the Artin fan of $X_{v}$ and let ${\bigtimes_v^\lambda} \Kbar_\rho({\mathscr A}_{v})$ be the Artin fan version of the modified product -- pull back the tropical moduli space and its universal families to the product of the spaces of maps to the Artin fan. Consider the natural morphism from the cotangent complex of the diagonal:
\[
\mathsf{ev}^\star \mathrm L_\Delta\to\mathrm{split}^\star \boxtimes_v (\mathrm{R}\pi_{v\star} f_v^\star T_v^{\mathrm{log}})^\vee,
\]
where $T_v^{\mathrm{log}}$ is the logarithmic tangent bundle of the target $X_v$, and $\pi_v$ and $f_v$ are the universal curve and map respectively. The cone of this morphism defines an obstruction theory on the glued space, and thus an obstruction theory relative to ${\bigtimes_v^\lambda} \Kbar_\rho(\mathscr A_{v})$. Geometrically, the deformations are controlled by those logarithmic tangent fields of the target that preserve the gluing conditions.

\begin{remark}
The relative diagonal used above is \textit{not} the pullback of the diagonal from the projection to $D^2$, although it is contained inside of it. The difference between the two accounts for the tropical transversality requirement in the previous section. In the usual degeneration formula, this diagonal will have a K\"unneth decomposition in cohomology, and Poincar\'e duality can then be used to distribute insertions across the gluing markings. In this case, the diagonal does not have a K\"unneth decomposition and must be worked with directly. A similar point arises in work of Pandharipande and Pixton on the Gromov--Witten/Pairs correspondence for the quintic threefold, and the techniques developed there are likely to be useful here~\cite[\textit{Section~1.2}]{PP17}. In order to understand this conceptually, one can apply a similar and simpler version of the ideas in the present paper to the evaluation space -- the space of maps from $n$ points to a divisor $D$, which are not allowed to touch the boundary of $D$. This will produce a modification of the evaluation space constructed in~\cite{ACGM}, and we will construct it in a separate paper. 
\end{remark}

By performing a base change for the degeneration $\mathscr Y\to \A^1$, we assume that its special fiber is reduced. By modifying $\lambda$, we ensure that the universal target family is also reduced.

\begin{lemma}
The morphism that forgets the logarithmic structure at the gluing nodes
\[
\mu: \Kbar^\lambda_\rho(Y_0)\to \mathcal{G}^\lambda(Y_0)
\]
is finite and \'etale of degree $\prod_e m_e$, where $m_e$ is the expansion factor on the edge $e$ of the rigid tropical curve. 
\end{lemma}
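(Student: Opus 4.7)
The plan is to analyze $\mu$ étale-locally around each gluing node of the universal curve and reduce the statement to a standard single-node calculation. Since $\mu$ only alters logarithmic structure at the finite collection of gluing nodes and is the identity elsewhere, the fiber problem over a point of $\mathcal G^\lambda(Y_0)$ decomposes into independent local problems indexed by the edges $e$ of the rigid tropical type $\rho$.

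First I would establish that $\mu$ is representable, quasi-finite, and proper. Representability and quasi-finiteness are direct applications of Wise's theorem on logarithmic enhancements, invoked in the transverse setting guaranteed by Lemma~\ref{thm: transversality}: the underlying partially glued map meets the gluing divisor transversally at the marked points with the prescribed contact orders $m_e$, so the set of log enhancements of a given scheme-theoretic situation is finite. Properness follows from the valuative criterion together with a tropical smoothing argument mirroring Section~\ref{sec: trop-smoothing}: given a strict henselian trait $T$ with a map from its generic point to $\Kbar^\lambda_\rho(Y_0)$ compatible with an entire $T$-family in $\mathcal G^\lambda(Y_0)$, the smoothing parameters of the gluing nodes are pinned down uniquely by the already present expansion parameters on the base together with the contact orders.

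Next I would compute the degree by reducing to a single node. Fix a gluing edge $e$ with expansion factor $m_e$ and pass to an étale neighborhood. The transversality in Lemma~\ref{thm: transversality} puts the data into standard local form: the source curve is $xy=t_e$ for a parameter $t_e$ on the base, the target expansion is $uv=s_e$ where $s_e$ is the expansion parameter on the base attached to the broken edge crossing $\mathrm D_e$, and the underlying map is $u=\alpha\,x^{m_e}$, $v=\beta\,y^{m_e}$ for units $\alpha,\beta$. Scheme-theoretic compatibility forces $s_e=(\alpha\beta)\,t_e^{m_e}$. On the $\mathcal G^\lambda(Y_0)$ side the base carries the characteristic element $\epsilon_e$ tracking $s_e$ but no element at the node of $C$, whereas a lift to $\Kbar^\lambda_\rho(Y_0)$ requires augmenting the base log structure by a section $\delta_e$ with $m_e\delta_e=\epsilon_e$ in the characteristic and whose image in $\mathcal O_S$ matches $t_e$ up to the unit data already fixed.

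The key counting step is then that, at the level of characteristic monoids, adjoining $\delta_e$ is a canonical saturation and hence unique, while at the level of the actual log structure the specification of the image of $\delta_e$ under $M_S\to\mathcal O_S$ amounts to a choice of $m_e$-th root of $s_e$ compatible with the map, and in characteristic zero over an algebraically closed field there are exactly $m_e$ such choices, all étale in families; unobstructedness and étale variation in families is again Wise's theorem under the transversality hypothesis. The main obstacle I anticipate is careful bookkeeping to confirm that the local torsors attached to distinct gluing nodes are independent and commute, so that the per-node étale degree-$m_e$ covers patch to a global étale cover of degree $\prod_e m_e$. This should ultimately be a formality once the local model is fixed, since the logarithmic structures at different nodes are mutually independent in the étale topology on the base.
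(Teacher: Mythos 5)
Your overall strategy---\'etale-local reduction to a single node, invoking transversality to put things in standard form, and counting lifts node by node---is the same strategy underlying the references the paper cites (notably~\cite[Theorem~5.3.3]{ACGS15}), and it is more hands-on than the paper's proof, which simply points to the literature for the count. However, the counting mechanism you give is wrong, and wrong in exactly the subtle place. You say the specification of the lift amounts to ``a choice of $m_e$-th root of $s_e$'' under $M_S\to\cal O_S$, and that there are exactly $m_e$ such choices. But the image in $\cal O_S$ of the smoothing parameter $\tilde\delta_e$ is already forced by the underlying scheme-theoretic map: it must agree with the curve's node parameter $t_e$ up to the unit. In the degenerate situation where $s_e=0$ (which is precisely the interesting case, where the target is genuinely expanded), $0$ has only one $m_e$-th root, so your recipe produces a count of $1$, not $m_e$. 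The $m_e$ lifts are not distinguished by their images in $\cal O_S$ at all.

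The correct statement is that the lifts form a torsor under $\mu_{m_e}(\cal O_S^*)$. Since $M_S$ sits in an extension $1\to\cal O_S^*\to M_S\to\overline M_S\to 1$, and the characteristic element $\bar\delta_e$ with $m_e\bar\delta_e=\bar\epsilon_e$ is unique after saturation, the set of lifts $\tilde\delta_e\in M_S$ of $\bar\delta_e$ is an $\cal O_S^*$-torsor; the multiplicative constraint $\tilde\delta_e^{m_e}=\tilde\epsilon_e$ cuts it down to a $\mu_{m_e}$-torsor. Over an algebraically closed field of characteristic zero this has exactly $m_e$ elements regardless of whether $s_e$ is a unit. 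This distinction is precisely why the paper emphasizes that the lift is unique in the \emph{fine but not necessarily saturated} category and that the multiplicity $m_e$ arises on passage to saturation; in your proposal that passage is collapsed into an incorrect $\cal O_S$-level root extraction. Your properness argument via tropical smoothing and your reduction of the global degree to the per-node product are fine, but the central count needs the torsor argument, not $m_e$-th roots of $s_e$ in $\cal O_S$.
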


\begin{proof}
This is a well known fact that has appeared in various places in the literature. To see that the map is surjective, note that every point in $\mathcal G^\lambda(Y_0)$ gives rise to a map to the special fiber of an expansion of the degeneration $\mathscr Y$:
\[
C_0^\lambda\to \mathcal Y_0^\lambda\hookrightarrow \mathscr Y^\lambda,
\]
such that $C^\lambda_0$ is transverse to the special fiber. In particular, the nodes of $C^\lambda_0$ map to locally closed codimension $1$ strata in $Y^\lambda_0$ where two components meet. The special fiber $Y^\lambda_0$ inherits a logarithmic structure from the degeneration $\mathscr Y^\lambda$, and since the curve is dimensionally transverse, the pullback of the logarithmic structure gives rise to a unique logarithmic lift in the category of fine but not necessarily saturated logarithmic schemes. The existence of lifts is immediate from transversality~\cite[Lemma~4.2.2]{AMW12}, while the uniqueness follows from~\cite{Wis16b}. 

It remains to count the number of lifts and show that it is the claimed number. The calculation is well known, and is recorded in complete generality and great detail in~\cite[Theorem~5.3.3]{ACGS15}. Note that~\cite[Proposition~7.1]{NS06} and~\cite[Lemma~7.9.1]{CheDegForm} both suffice here in the reduced case. 
\end{proof}

We have one final compatibility to check. Parallel to the tropical smoothing at the end of the previous section, we verify that once the map is transverse at the gluing nodes, there are no additional obstructions to promote a map to a logarithmic one. 

\begin{lemma}
There is an equality
\[
\mu_\star[\Kbar^\lambda_\rho(Y_0)]^{\mathrm{vir}}  = \prod_e m_e\cdot \left[\mathcal{G}^\lambda(Y_0)\right]^{\mathrm{vir}}.
\]
\end{lemma}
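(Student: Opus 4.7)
The plan is to reduce the equality to a compatibility of relative obstruction theories and then invoke the finite \'etale degree calculation of the previous lemma. Since $\mu$ is finite and \'etale of degree $\prod_e m_e$, it will suffice to check that the obstruction theory defining $[\mathcal G^\lambda(Y_0)]^{\mathrm{vir}}$ pulls back along $\mu$ to the obstruction theory defining $[\Kbar^\lambda_\rho(Y_0)]^{\mathrm{vir}}$. Manolache's functoriality of virtual pullback, together with the projection formula, will then produce the claimed identity with multiplicity $\prod_e m_e$.

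I would first realise both virtual classes as virtual pullbacks over a common base. By Section~\ref{sec: vir-class}, the class $[\Kbar^\lambda_\rho(Y_0)]^{\mathrm{vir}}$ is obtained by virtual pullback from $\Kbar^\lambda_\rho(\mathscr A_{Y_0})$ with obstruction theory $(\mathrm{R}\pi_\star f^\star T^{\mathrm{log}}_{\mathcal Y_0^\lambda})^\vee$. Via $\mu$, this compares with the obstruction complex for $\mathcal G^\lambda(Y_0)$, which, as stated in the text, is the cone of $\mathsf{ev}^\star \mathrm L_\Delta\to \mathrm{split}^\star \boxtimes_v \bigl(\mathrm{R}\pi_{v\star} f_v^\star T^{\mathrm{log}}_{X_v}\bigr)^\vee$. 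Both stacks lie over the modified product of spaces of maps to the Artin fans, so the comparison of obstruction theories takes place there.

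The comparison itself will be effected by the normalisation short exact sequence
\[
0\to T^{\mathrm{log}}_{\mathcal Y_0^\lambda}\to \bigoplus_v \iota_{v\star} T^{\mathrm{log}}_{\mathcal X_v^\lambda}\to \bigoplus_e T_{D_e}\to 0,
\]
where $\iota_v$ is the inclusion of the $v$-component and the right-hand map records the difference of the two restrictions along each open gluing divisor $D_e$. The transversality arranged by our subdivisions ensures that every $D_e$ is smooth and that evaluations factor through its smooth locus, so this sequence is exact. Pulling back along $f$, applying $\mathrm{R}\pi_\star$, and dualising will yield a distinguished triangle of two-term complexes whose endpoints are precisely the $\mu$-pullback of the obstruction theory on $\Kbar^\lambda_\rho(Y_0)$ and the diagonal obstruction theory on $\mathcal G^\lambda(Y_0)$.

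The hard part will be identifying the connecting morphism of this distinguished triangle with the differential $\mathsf{ev}^\star \mathrm L_\Delta$ appearing in the diagonal obstruction theory. While this identification is pointwise routine -- it reduces to the fact that at each node the logarithmic tangent contribution is dual to the normal bundle of the diagonal of $D_e$ in $D_e^2$ -- carrying it out functorially in the logarithmic category and correctly tracking how the contact order $m_e$ intervenes through the saturation of the logarithmic structure at the gluing node is the subtle step. Once this is in place, Costello's pushforward theorem~\cite{Cos06} and the finite \'etale degree computation of the preceding lemma will combine to give the desired equality.
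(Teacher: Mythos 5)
Your overall plan is the right one and matches the paper's: establish that the obstruction theory on $\mathcal G^\lambda(Y_0)$ pulls back under $\mu$ to the obstruction theory on $\Kbar^\lambda_\rho(Y_0)$, and then let the degree $\prod_e m_e$ of the finite \'etale morphism $\mu$ (computed in the preceding lemma) supply the combinatorial factor. The paper's first argument is exactly this comparison, carried out by citing the cotangent-complex computation in Chen's proof, while its second argument sidesteps cotangent complexes entirely by identifying the two deformation problems as lifting problems whose solutions form torsors under the \emph{same} sheaf $f^\star T^{\mathrm{log}}_{Y_0}\otimes I_{S\subset S'}$; both routes are valid and the torsor one avoids the distinguished-triangle bookkeeping you are worried about.

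However, there is a genuine confusion in your account of where $m_e$ enters. You describe ``correctly tracking how the contact order $m_e$ intervenes through the saturation of the logarithmic structure at the gluing node'' as the subtle step \emph{within} the obstruction-theory comparison. This is not so: the obstruction theories must be found to be \emph{literally} identified under $\mu^\star$, with no multiplicity correction, precisely because $\mu$ is \'etale. The entire factor $\prod_e m_e$ is accounted for by the degree of $\mu$, which was already computed in the previous lemma (it counts the number of fine saturated logarithmic structures lifting a fixed transverse scheme-theoretic map). If your normalisation-sequence computation were to produce an extra factor of $m_e$, that would be an error, not a feature. Keeping this straight matters: if you both build $m_e$ into the obstruction-theory comparison \emph{and} invoke $\deg\mu=\prod_e m_e$, you would double-count.

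Separately, you flag the identification of the connecting morphism of your triangle with $\mathsf{ev}^\star\mathrm L_\Delta$ as ``the hard part'' and do not actually carry it out, so as written the argument has a hole at its centre. This is the step the paper handles either by citing~\cite[Proposition~7.10.1]{CheDegForm} (after deleting the codimension-$2$ strata so that the expanded target is a union of smooth pieces meeting along smooth divisors) or by the torsor-comparison argument. If you want to pursue the normalisation short exact sequence, note also that the third term must be the pushforward of the relative tangent sheaf of the open divisor family $\mathcal D_e$ along the inclusion into $\mathcal Y_0^\lambda$, so that the connecting map genuinely lands in the normal bundle of the relative diagonal; merely writing $T_{D_e}$ without specifying this support obscures the comparison you need to make.
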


This fact has appeared in the literature in nearly identical form. The reader can find the proof in detail in~\cite[Section~7.10]{CheDegForm} and~\cite[Section~5.8.1]{AF11}. We follow~\cite[Section 6]{AW}.

\begin{proof}
Two approaches are available. The one used in~\cite{CheDegForm} is as follows. Since the morphism $\mu$ is \'etale, the obstruction theory on $\mathcal{G}^\lambda(Y_0)$ pulls back to an obstruction theory on $\Kbar^\lambda_\rho(Y_0)$. This space already has an obstruction theory, so we must check that the two obstruction theories coincide. Since our target is expanded, simply delete the codimension $2$ strata of $Y_0^\lambda$, such that over every point, the target is a union of smooth varieties meeting transversely along a divisor that is smooth in each. Now directly apply the calculation involving cotangent complexes in~\cite[Proposition~7.10.1]{CheDegForm} to conclude that the two obstruction theories coincide, from which the result follows immediately. 

We give the details on a second approach, seeing the compatibility of obstruction theories without reference to the cotangent complex by following~\cite[Section~7.3]{Wis11}. The standard obstruction theory for maps to $\mathscr Y$ is obtained by considering lifts over $\Kbar^\lambda_\rho(\mathscr A_{\mathscr Y})$:
\[ \vcenter{\xymatrix{
& & Y_0 \ar[d]  \\
C \ar[r] \ar@/^15pt/[urr]^f \ar[d] & C' \ar[r] \ar@{-->}[ur] \ar[d] & \mathcal A_{\mathscr Y}\ar[d] \\
S \ar[r] & S'\ar[r] &\spec(\NN\to\CC),
}} \]
where $S\subset S'$ is a strict square zero thickening of $S$. Here, the map $Y_0\to \mathcal Y$ is a strict logarithmic morphism. The lifts of this diagram form a torsor under the sheaf of abelian groups $f^\star T_{Y_0}^{\mathrm{log}}\otimes I_{S\subset S'}$.  The stack on $S$ of such torsors on $C$ defines an obstruction theory on the space of maps, relatively over $\Kbar^\lambda_\rho(\mathcal A_{\mathscr Y})$. 

On the other hand, the obstruction theory pulled back from $\mathcal{G}^\lambda(Y_0)$ by definition considers the deformation problem for logarithmic maps from partial normalizations of a logarithmic curve, to expansions of the components $X_v$ with their divisorial logarithmic structure that are required to glue scheme theoretically to form a map to the underlying scheme of $Y_0$. 

The lifting problem for this second obstruction theory is obtained from diagrams:
\[ \vcenter{\xymatrix{
& & Y^\lambda_0 \ar@/^15pt/[ddl]  \\
C \ar[r] \ar@/^15pt/[urr]^f \ar[d] & C' \ar@{-->}[ur] \ar[d]  \\
S \ar[r] & S'.
}} \]
Over $S'$ there is a canonical closed embedding of schemes, including the special fiber as
\[
\mathcal Y_0^\lambda\to \mathscr Y^\lambda
\]
where $\mathscr Y^\lambda\to \A^1$ is the relative degeneration over $S'$ chosen in the construction of the modified product ${\bigtimes_v^\lambda} \Kbar_\rho(X_{v})$. The map $C\to \mathscr Y_0^\lambda$ is transverse to the strata, and since the extension is strict, any lift from $C'$ is automatically transverse. Endow $ \mathcal Y_0^\lambda$ with the logarithmic structure coming from its inclusion as the special fiber of the degeneration. The transversality guarantees that this map lifts to a logarithmic map, compatible with the given logarithmic structures on $C$ and $C'$.

Conversely, given a lift of the first diagram, simply forget the logarithmic structure at the marked nodes to obtain a lifting diagram below. This canonically identifies the torsors defining the obstruction theories. The lifts of the second diagram therefore also form a torsor under the sheaf of abelian groups $f^\star T_{Y_0}^{\mathrm{log}}\otimes I_{S\subset S'}$. This identifies the obstruction theories, and virtual classes, as required. 
\end{proof}

\subsection{Implementation and special cases}\label{sec: implementations} We summarize the previous sections and put the pieces together for easy access. The gluing formula is obtained as follows. For a rigid tropical stable map $[\rho]$ with underlying graph $G$, we construct a virtual birational modification of the product of moduli spaces of maps over the vertices. This space is equipped with boundary evaluations corresponding to flags of edges in $G$. The target of the boundary evaluation at $e$ is generically the divisor $D_e$ dual to $e$. Over deeper strata in the modified product, the target is replaced by toric bundles over the strata of $D_e$. The gluing fiber diagram is the following one:
\[
\begin{tikzcd}
\mathcal{G}^\lambda(Y_0)\arrow{r}{\mathrm{split}}\arrow{d} & {\bigtimes_v^\lambda} \Kbar_{\rho}(X_{v}) \arrow{d}{\mathsf{ev}} \\
\mathcal D_G \arrow[swap]{r}{\Delta} & \mathcal D^2_G.
\end{tikzcd}
\]
The two vertical arrows are sections of smooth fibrations. Up to the combinatorial factor $\prod_e m_e$, the virtual count attached to $[\rho]$ is computed by a virtual integral on $\mathcal{G}^\lambda(Y_0)$. Our degeneration formula for the virtual class involves a relative diagonal, i.e. the class of the fiberwise diagonal in the fiber square of the divisor expansion. This complicates numerical applications of the degeneration formula. An identical phenomenon occurs in Parker's theory. Parker's refined cohomology, which supports his cycle theoretic degeneration formula, does not satisfy a K\"unneth formula, and hence, no diagonal splitting. Nonetheless, our formula collapses in special cases. 

\subsubsection{Examples in the existing literature} A class of examples are present in the literature already. As has already been mentioned, when the degeneration $\mathscr Y$ has no triple of components intersecting, the formula above collapses immediately to Jun Li's formula. Examples beyond this case can be found in surface geometries. In the papers~\cite{Bou19,CJMR,GPS}, explicit arguments show that the relevant Gromov--Witten cycles can be supported away from loci where the universal divisor expands nontrivially. These arguments are recorded in~\cite[Proposition~11]{Bou19},~\cite[Proposition~5.10]{CJMR}, and~\cite[Proposition~4.2]{GPS} in the appropriate contexts. It follows that the relative and ordinary diagonals coincide for the purpose of calculating invariants, and a degeneration formula follows. On the other hand, when dealing with unobstructed geometries, such as genus $0$ logarithmic descendant invariants of toric varieties, the moduli spaces are zero dimensional, and again, no additional divisor bubbling arises. This provides a direct link to work Mandel and Ruddat~\cite{MR16}, and earlier work of Nishinou and Siebert~\cite{NS06}.

%
%

\subsubsection{The triple point degeneration in genus $0$} There is a simplified gluing formula that can be used for genus $0$ for degenerations where the dual complex has dimension $2$, and in particular, the divisors are smooth pairs. 

When the target is a toric surface degeneration, this simplification is implicit throughout the tropical literature, see for instance~\cite{GM07c}. It is also explained by Parker in his framework in~\cite[Section~4]{ParTriple}. 

Let $\mathscr Y\to \mathbb A^1$ be a simple normal crossings degeneration with generic fiber $Y_\eta$ and special fiber $Y$. Assume also that no four irreducible components of the special fiber $Y_0$ meet. The fibers of the associated tropical degeneration $\Sigma\to \RR_{\geq 0}$ are therefore at most $2$-dimensional polyhedral complexes. 

We describe an algorithm for computing the Gromov--Witten theory of $Y_\eta$ in genus $0$ with stationary insertions using the degeneration above.  Fix $n$ points in $Y_\eta$ and let $p_1,\ldots, p_n$ denote their limits in the special fiber. The tropicalizations of these points give rise to tropical point constraints $\{p_i^{\trop}\}$ in $\Sigma_\eta$. Fix a rigid tropical stable map $\gamma = [f_\eta:\plC_\eta\to \Sigma_\eta]$ meeting the point constraints $\{p_i^{\trop}\}$. We compute the $0$ cycle defined, for any indexing choice $\lambda$, as
\[
\langle p_1,,\ldots,p_n\rangle^{Y_0}_{\gamma} :=\int_{[\Kbar^\lambda_\gamma(Y_0)]^{\mathrm{vir}}} \prod_{i=1}^n \mathsf{ev_i}^\star(p_i) = \int_{[\mathsf{ACGS}_\rho(Y_0)]^{\mathrm{vir}}} \prod_{i=1}^n \mathsf{ev_i}^\star(p_i).
\]

In this circumstance, observe that the irreducible component to which the evaluation morphism attached to an edge $e$ maps is always $D_e$, rather than a bundle over a stratum of it. This follows from the fact that $D_e$ has a $1$-dimensional cone complex. The genus $0$ condition precludes the possibility of parallel edges in $G$. These facts combine to simplify the gluing formula in this case to the following algorithm. 

\begin{algorithm}
The invariant $\langle p_1,,\ldots,p_n\rangle^{Y_0}_{\gamma}$ is computed by the following procedure.
\begin{enumerate}[{\bf Step A:}]
\item Orient the graph $\plC_\eta$ such (1) that all ends of contact order $0$ are incoming to their incident vertex, and (2) every vertex has at most one outgoing edge. 
\item For each vertex $v$ of $\plC_\eta$, let $\mathsf{K}_v$ be the moduli space of maps to expansions of the component $Y_v$ with type given by the star of $v$ in $[\plC_\infty\to \Sigma_\infty]$. 
\item For an edge $e$, let $D_e$ be the stratum to which it is dual. Let $v$ be a vertex with exiting edge $e_v$, define the operator on cohomology
\[
\mathsf{GW_v}: \bigotimes_{e:\mathrm{incoming}} H^\star(D_e;\mathbb Q)\to H^\star(D_{e_v};\mathbb Q)
\]
as follows. Given a class in $\bigotimes_{e:\mathrm{incoming}} H^\star(D_e;\mathbb Q)$, pull it back via the evaluation morphisms to the moduli space $\mathsf{K_v}$, cap it with $[\mathsf{K_v}]^{\mathrm{vir}}$, and push it forward via the evaluation morphism $\mathsf{ev_{e_v}}$. 
\item Assign incidence cycles to edges $e$ as follows. If $e$ is an edge attached to a marked point with trivial contact, then define a class $\alpha(e)$ to be the class of a point. Otherwise, if $e$ is the exiting edge of a vertex $v$, define 
\[
\alpha(e) = \mathsf{GW}_v(\otimes_{e'}\alpha(e')),
\]
where $e'$ ranges over the incoming edges incident to $v$. 
\end{enumerate}
\end{algorithm}

\subsubsection{Validation of the algorithm} We provide a sketch that the simplified algorithm computes the invariants at hand. Fix an edge $e$ of the curve $\plC_\infty$. If we cut the graph $\plC_\infty$ at $e$, we obtain two tropical curves. Let $\mathsf K_1$ and $\mathsf K_2$ denote the moduli spaces associated to these two tropical curves. According to the main theorem, we have the following diagram
\[
\widetilde{\mathsf K_{1}\times\mathsf K_{2}} \to \mathrm D_e^{\{2\}}\to D_e\times D_e,
\]
where the first map is a section of a smooth fibration. Note that by construction $\mathrm D_e^{\{2\}}$ is an open subset of a logarithmic modification associated to the product:
\[
\mathrm D_e^{\{2\}}\to {\mathsf K_{1}\times\mathsf K_{2}} \times D_e^2.
\]
The second arrow above is obtained by blown down and projection onto the second factor in this product. By passing to a further subdivision, we can assume that there is a factorization
\[
\widetilde{\mathsf K_{1}\times\mathsf K_{2}} \xrightarrow{\vartheta} \widetilde{D_e\times D_e}\xrightarrow{\varpi} D_e\times D_e,
\]
such that $\vartheta$ is combinatorially flat and $\widetilde{D_e\times D_e}$ is smooth. The degeneration formula proved previously shows that the virtual class of the moduli space associated to $\gamma$ can be computed in two steps: (i) take the diagonal $D_e$ inside $D_e\times D_e$ and calculate its $\varpi$-strict transform, and (ii) pullback the resulting Chow cohomology class via $\vartheta$ and operate it on the virtual class. 

We now use a simple observation. The morphism
\[
\mathsf K_1\times\mathsf K_2\to D_e\times D_e
\]
is \textit{automatically} combinatorially flat. Indeed, the morphism is a product of two morphisms, and it suffices to check the combinatorial flatness on each factor. Since the tropical target has cones of dimension $0$ or $1$, the condition is immediate. Now consider the following intersection diagram:
\[
\begin{tikzcd}
\widetilde{\mathsf K_{1}\times\mathsf K_{2}}\arrow{dr}  & & \\
 & (\mathsf K_{1}\times\mathsf K_{2})^\dagger \arrow{d}\arrow{r} & \mathsf K_{1}\times\mathsf K_{2}\arrow{d} \\
 & \widetilde{D_e\times D_e}\arrow{r} & {D_e\times D_e}. 
\end{tikzcd}
\]
The square is defined by fine and saturated logarithmic base change. However, the right vertical arrow is combinatorially flat, so the square is fiber in the category of algebraic stacks as well. Since the diagonal in $\widetilde{D_e\times D_e}$ pushes forward to the diagonal in $D_e\times D_e$, it follows that the pushforward of the virtual class $[\mathsf K_\gamma]$ to $\mathsf K_1\times\mathsf K_2$ is precisely the pullback of the diagonal in $D_e\times D_e$ under the right vertical arrow above. 

The algorithm is now justified for a single edge $e$ by an elementary diagram chase; the general case follows by induction. \qed

\bibliographystyle{siam} 
\bibliography{DegenerationFormula2022}

\end{document}